\documentclass[10pt,a4paper,oneside]{amsart}
%%%%%%%%%%%%%%%%%%%%%%%%%%%%%%%%%%%%%%%%%%%%%%%%%%%%%%%%%%%%%%%
% Preamble version 02.02.24
%%%%%%%%%%%%%%%%%%%%%%%%%%%%%%%%%%%%%%%%%%%%%%%%%%%%%%%%%%%%%%%

% \usepackage[a4paper]{geometry} % Changing page shape
% \geometry{left=2.5cm,right=2.5cm,top=3cm,bottom=3cm}

% Comments

\newcounter{commentcounter}

%%%%%%%%%%%%%%%%%%%%%%%%%%%%%%%%%%%%%%%%%%%%%%%%%%%%%%%%%%%%%%%
% Packages
\usepackage{amsmath} % Lots of maths functionality
\usepackage{amssymb} % Maths symbols
\usepackage{amsthm} % Maths environments: \begin{proof}, etc.
\usepackage{stmaryrd} % [[ brackets
\usepackage[english]{babel} % Language and hyphenation.
\usepackage[font=small,justification=centering]{caption} % More flexibility for captioning figures
\usepackage[nodayofweek]{datetime}
\usepackage[shortlabels]{enumitem} % Change enumeration labelling with \begin{enumerate}[a)] etc.
\usepackage[T1]{fontenc} % For font encoding, to allow accents, copy & paste, inequality signs, etc. to all work nicely.
\usepackage[utf8]{inputenc} % To be loaded after fontenc, also for encoding.
\usepackage{ifthen} % For Dani's \begin{com} environment and \numberedtheorem
\usepackage{mathabx} % Contains \pnest symbol and more. Clashes with accents for \ring
\usepackage{mathtools} % Uses amsmath, fixes quirks and adds functionality.
\usepackage[dvipsnames]{xcolor} % Allow links to have colour. Needs to be before hyperref and tikz-cd
\usepackage[pdftex,  colorlinks=true, pagebackref=true]{hyperref} % Makes references and citations into links
    \hypersetup{urlcolor=RoyalBlue, linkcolor=RoyalBlue,  citecolor=black}
\usepackage{setspace} % Allows \onehalfspacing etc. for changing gaps between lines
% \onehalfspacing
\usepackage{tikz-cd} % Commutative diagrams
\usetikzlibrary{fit,shapes.geometric}
\usepackage{xfrac} % Nicer quotients 
\usepackage[capitalize]{cleveref} % use \Cref{} for instead of X~\ref{} 

%\usepackage[inline]{showlabels} %Disable to hide \ref{} tags
%%%%%%%%%%%%%%%%%%%%%%%%%%%%%%%%%%%%%%%%%%%%%%%%%%%%%%%%%%%%%%%

% Back references to page from citations
\renewcommand*{\backref}[1]{}
\renewcommand*{\backrefalt}[4]
{
    \ifcase #1
        No citation in the text.
    \or
        Cited on Page #2.
    \else
        Cited on Pages #2.
    \fi
}

% Subject class
\makeatletter
\@namedef{subjclassname@1991}{Mathematical subject classification 1991}
\@namedef{subjclassname@2000}{Mathematical subject classification 2000}
\@namedef{subjclassname@2010}{Mathematical subject classification 2010}
\@namedef{subjclassname@2020}{Mathematical subject classification 2020}
\makeatother

% Letter numbered subsections

% Theorem Counters
\newtheorem{thm}{Theorem}[section]
\newtheorem{lemma}[thm]{Lemma}
\newtheorem{corollary}[thm]{Corollary}
\newtheorem{prop}[thm]{Proposition}

% "letter-numbered" theorems
\newtheorem{thmx}{Theorem}

% Definition environment style 
\theoremstyle{definition}
\newtheorem{defn}[thm]{Definition}
\newtheorem{remark}[thm]{Remark}

\newtheorem{example}[thm]{Example}

\theoremstyle{plain}

    \newtheoremstyle{TheoremNum}
        {8.0pt plus 2.0pt minus 4.0pt}{8.0pt plus 2.0pt minus 4.0pt} %%% space between body and thm
        {\itshape} %%% Thm body font
        {-0.185cm} %%% Indent amount (empty = no indent)
        {\bfseries} %%% Thm head font
        {.} %%% Punctuation after thm head
        { }  %%% Space after thm head
        {\thmname{#1}\thmnote{ \bfseries #3}}%%% Thm head spec
    \theoremstyle{TheoremNum}
    \newtheorem{duplicate}{}

\newcommand*{\claimproofname}{My proof}

%%%%%%%%%%%%%%%%%%%%%%%%%%%%%%%%%%%%%%%%%%%%%%%%%%%%%%%%%%%%%%%

% Large asterisks created by Ian Leary with some help from Boris Okun

% Sets

\DeclareMathOperator{\Hom}{\mathrm{Hom}}
\DeclareMathOperator{\Ext}{\mathrm{Ext}}
\DeclareMathOperator{\Tor}{\mathrm{Tor}}

\newcommand{\tf}{{\mathrm{tf}}}

% Cohomology conditions / finiteness properties etc.

\newcommand{\FP}{\mathsf{FP}}

% math fonts

% mathcal

\newcommand{\calh}{{\mathcal{H}}}

\newcommand{\calm}{{\mathcal{M}}}

% mathfrak

% underline

% Categories

\newcommand{\Ab}{\mathbf{Ab}}

% Spectra

% Families

% Functors

% Groups

% HHG relations

%Misc

\newcommand{\Sq}{\mathrm{Sq}}

% Symmetric spaces

 % Hyperbolic space over K
 % Real hyperbolic space
 % Complex hyperbolic space
 % Quaternion hyperbolic space
 % Cayley hyperbolic space
 % The other rank one group

% Projective spaces
 % Projective plane over K
\newcommand{\RP}{\mathbb{R}\mathbf{P}} % Real projective plane
\newcommand{\CP}{\mathbb{C}\mathbf{P}} % Complex projective plane
 % Cayley projective plane

% Invariants

\DeclareMathOperator{\rank}{\mathrm{rank}}

\DeclareMathOperator{\cd}{\mathrm{cd}}

% Spaces

% Rings

%\newcommand{\gg}{\mathfrak{g}}
%\newcommand{\hh}{\mathfrak{h}}
%\newcommand{\gl}{\mathfrak{gl}}
%\renewcommand{\sl}{\mathfrak{sl}}
%\newcommand{\so}{\mathfrak{so}}
%\newcommand{\nov}[3]{{\mathrm{Nov}({#1 #2, #3}})}

 %completed group ring
\def\Z{\mathbb{Z}}

% Fields

\newcommand{\ZZ}{\mathbb{Z}}
\newcommand{\CC}{\mathbb{C}}

\newcommand{\QQ}{\mathbb{Q}}
\newcommand{\FF}{\mathbb{F}}

%ODEs and PDEs

%tikz
\usepackage{tikz}
\usetikzlibrary{arrows,quotes}
\tikzstyle{blackNode}=[fill=black, draw=black, shape=circle]

\tikzcdset{scale cd/.style={every label/.append style={scale=#1},
    cells={nodes={scale=#1}}}}

\title{Cobordism, spin structures, and profinite completions}

\DeclareMathOperator{\bTor}{\mathbf{Tor}}
\DeclareMathOperator{\bExt}{\mathbf{Ext}}
\DeclareMathOperator{\bHom}{\mathbf{Hom}}
\DeclareMathOperator{\PMod}{\mathrm{PMod}}
\DeclareMathOperator{\DMod}{\mathrm{DMod}}
\DeclareMathOperator{\FMod}{\mathrm{FMod}}
\author{Sam Hughes}
\author{Andrew Ng}
\address{Mathematisches Institut, Universität Bonn, Endenicher Allee 60, 53115 Bonn, Germany}
\email{sam.hughes.maths@gmail.com; hughes@math.uni-bonn.de}
\email{clan@math.uni-bonn.de}

\date{\today}
\subjclass[2020]{}

%%%%%%%%%%%%%%%%%%%%%%%%%%%%%%%%%%%%%%%%%%%%%%%%%%%%%%%%%%%%%%%
\begin{document}
\begin{abstract}
    Let $M$ and $N$ be smooth closed connected aspherical manifolds with good (in the sense of Serre) fundamental groups $G$ and $H$.  If $\widehat G\cong \widehat H$, then $M$ and $N$ are cobordant and the signatures of $M$ and $N$ agree modulo $8$.  Moreover, $M$ is spin (resp.spin$^\CC$) if and only if $N$ is spin (resp.spin$^\CC$).   We consider some analogous results for compact connected aspherical manifolds.
\end{abstract}
\maketitle

\section{Introduction}\label{sec:intro}
The study of profinite properties of fundamental groups of manifolds with geometric structures has received a wealth of attention.  
%For example, every isomorphism class of Fuchsian groups is determined from every other one by its finite quotients \cite{BridsonConderReid2016}.  
In the case of $3$-manifolds, Xu proved at most finitely many $3$-manifold groups may have the same isomorphism class of profinite completion \cite{Xu2025rigid}; Xu's work built on deep results of Liu \cite{Liu2023,Liu2023duke}, Wilton--Zalesskii \cite{WZ,WZ2019JSJ}, Wilkes \cite{Wilkes2017Seifert,Wilkes2018JSJ,Wilkes2018graph,Wilkes2019JSJS2} and others \cite{FriedlVidussi2011,BridsonReidWilon2017,BridsonReid2018,BoileauFriedl2020,JaikinZapirain2020}.  That not all $3$-manifolds are determined by the set of finite quotients of their fundamental groups can be seen in the work of Funar \cite{Funar2013}, Hempel \cite{Hempel2014}, and Nery \cite{Nery2020} (as well as the previously cited work of Wilkes). 
Breakthrough work of Bridson--McReynolds--Reid--Spitler showed that some hyperbolic $3$-manifold groups \cite{BridsonMcReynoldsReidSpitler2020} and Fuchsian triangle groups \cite{BridsonMcReynoldsReidSpitler2021} are determined up to isomorphism by their sets of finite quotients amongst all finitely generated residually finite groups. 

In dimension $4$ some work towards recognising geometric structures has been done \cite{MaWang2022};  distinguishing higher dimensional Seifert fibre spaces has also been studied \cite{Piwek2023,MaWang2023}.  
Profinite recognition of flat manifolds and crystallographic groups has also been deeply investigated \cite{FinkenNeubuserPleskin1980,GrunewaldZalesskii2011,PiwekPopovicWilkes2021,Nery2021,Nery2024,CHMV2025,PaoliniSklinos2024,CarolilloPaolini2025}.  
For more information, the reader is referred to Reid's ICM notes \cite{Reid2018ICM} and R\'emy's S\'eminaire Bourbaki \cite{Remy2024}.

For a paracompact space $X$ and a vector bundle $E\to X$, its \emph{Stiefel--Whitney classes} $w_i(E)$ are certain characteristic classes lying in $H^\ast(X;\FF_2)$.  In the case of the tangent bundle $TM\to M$ of a closed smooth manifold $M$, the Stiefel--Whitney classes $w_i(M)\coloneqq w_i(TM)$ obstruct various geometric properties of the manifold such as orientability, pin, and spin structures. After work of Thom \cite{Thom54}, they are also closely related to the problem of determining when two manifolds are (unoriented) cobordant. One can also define \emph{integral Stiefel--Whitney classes}, and in the case of a closed smooth manifold these are related to spin$^\CC$-structures.

We remark that every $3$-manifold admits a spin structure and every compact $4$-manifold admits a spin$^\CC$ structure \cite{HirzebruchHopf1958}.  Moreover, the existence of spin structures on a closed $4$-manifold $M$ is closely related to its signature $\sigma(M)$ \cite{Rohlin29152,Teichner1993}.  Recall that the \emph{signature} $\sigma(M)\in\Z$ of a closed manifold $M$ is given by the signature of the intersection form on (the free part of) $H^{2k}(M;\Z)$ if $\dim M=4k$ and is defined to be $0$ otherwise.

%The existence of spin structures  \cite{MiateloPodesta2004,DekimpeSadowskiSzczepanski2006,HissSzczcepanski2008,PutryczSzczepanski2010,HissSzczepanski2011,LutowskiPutrycz2015,LutowskiPetrosyanPopkoSzczepanski2019,LutowskiPopkoSzczepanski2022,Lutowski2025} and spin$^\CC$ structures  \cite{LutowskiPopkoSzczepanski2022}, as well as the non-vanishing of Stiefel--Whitney classes \cite{AuslanderSzczarba1962,Vasquez1970,ImKim1999}, on flat manifolds have received a large amount of attention.  The existence of spin structures on some almost flat manifolds \cite{GolhkPetrosyanSzczpanski2016,LutowskiPetrosyanSzczepanski2018} has also been considered.

In \cite{Sullivan1979}, Sullivan proved (using \cite{DeligneSullivan1975}) that every finite volume hyperbolic $n$-manifold has a finite cover which is stably parallelisable and hence admits a spin structure.  An alternative proof was given by Long--Reid in \cite{LongReid2020} and for all $n\geq 5$ the authors constructed orientable finite volume hyperbolic $n$-manifolds that do not admit a spin structure. In \cite{MartelliRioloSlavich2020}, Martelli--Riolo--Slavich constructed for all $n\geq 4$ orientable closed hyperbolic $n$-manifolds that do not admit a spin structure.  Riolo and Rizzi \cite{RioloRizzi2025}, for all $n\geq 4$ constructed a cusped orientable arithmetic hyperbolic $n$-manifold that does not admit a spin structure.  Chen \cite{Chen2025} proved, for all $n\geq 5$, the existence of infinitely many commensurability classes of closed orientable hyperbolic $n$-manifolds that do not admit any spin$^\CC$ structure.  There are also a number of papers constructing non-trivial Stiefel--Whitney classes on hyperbolic manifolds \cite{ReidSell2023,chen2025closedhyperbolic5manifolds,chen2025noncobordanthyperbolicmanifolds}.

Let $\widehat G$ denote the profinite completion of a group $G$. We say a group $G$ is \emph{good} (in the sense of Serre) if the natural map $\iota\colon G\to \widehat G$ induces isomorphism $\iota^n\colon \mathbf{H}^n(\widehat G;M)\to H^n(G;M)$ for every finite coefficient module $M$.  Here $\mathbf{H}^\ast(-,-)$ denotes continuous cohomology.

\begin{thmx}\label{thmx:A}
    Let $M$ and $N$ be smooth closed connected aspherical manifolds with good fundamental groups $G$ and $H$.  If $\widehat G\cong \widehat H$, then
    \begin{enumerate}
        \item $M$ and $N$ are (unoriented) cobordant;
        % \item $M$ admits a pin structure if and only $N$ admits a pin structure;
        \item $M$ admits a spin structure if and only $N$ admits a spin structure;
        \item $M$ admits a spin$^\CC$ structure if and only $N$ admits a spin$^\CC$ structure.
        \item $\sigma(M)\equiv \sigma(N)\pmod{8}.$
    \end{enumerate}
\end{thmx}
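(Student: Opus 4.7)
The plan is to transport all the relevant invariants across the profinite isomorphism using goodness and asphericity, and to express each one in terms of finite-coefficient cohomology together with cup products, Steenrod squares, Bocksteins, and the mod-$n$ fundamental class---data that the induced comparison map carries faithfully. Concretely, since $M$ and $N$ are aspherical, $H^\ast(M;A)\cong H^\ast(G;A)$ for every finite abelian group $A$, and goodness extends this to $\mathbf{H}^\ast(\widehat G;A)$; the hypothesis $\widehat G\cong\widehat H$ assembles these into natural isomorphisms
\[
\Phi_A\colon H^\ast(M;A)\xrightarrow{\;\sim\;}H^\ast(N;A)
\]
compatible with cup products, with all Bocksteins (by functoriality in $A$), and with the Steenrod algebra (via its natural construction on continuous cohomology of profinite groups). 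In particular $\dim M=\dim N$ by matching top non-vanishing $\FF_2$-cohomology; call the common value $n$, and observe that $\Phi_{\FF_2}$ automatically sends the mod-$2$ fundamental cohomology class of $M$ to that of $N$, each being the unique non-zero element of $H^n(-;\FF_2)$.

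Next I would invoke Wu's formula: the Wu classes $v_i\in H^i(M;\FF_2)$ are intrinsically determined by the quadruple $(H^\ast(M;\FF_2),\cup,\Sq,[M]_{\FF_2})$ via $\langle v_i\cup x,[M]_{\FF_2}\rangle=\langle \Sq^i x,[M]_{\FF_2}\rangle$, and $w_i=\sum_j\Sq^j v_{i-j}$. Hence $\Phi_{\FF_2}$ sends $w_i(M)$ to $w_i(N)$, and the Stiefel--Whitney numbers are preserved, so Thom's classification yields (1). Since spin is characterised by $w_1=w_2=0$, this gives (2). For spin$^\CC$, the obstruction is $w_1=0$ together with $W_3\coloneqq\beta(w_2)=0$ in $H^3(-;\Z)$, where $\beta$ is the integral Bockstein; because $W_3$ is $2$-torsion in a finitely generated group, its vanishing is equivalent to the vanishing of $\beta_{2^r}(w_2)\in H^3(-;\Z/2^r)$ for $r$ sufficiently large---a purely finite-coefficient condition preserved by $\Phi$. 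This gives (3).

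For (4), the non-orientable case is trivial (since $w_1$ is preserved and $\sigma=0$ by convention), so assume both manifolds are orientable and $n=4k$. I would appeal to van der Blij's formula: $\sigma(M)\equiv c^2[M]\pmod 8$ for any integral characteristic element $c\in H^{2k}(M;\Z)/\mathrm{tors}$. A short calculation using $\Sq^{2k}d=v_{2k}\cup d$ on $2k$-dimensional $d$ shows that $c^2[M]\pmod 8$ depends only on the mod-$8$ reduction $\bar c\in H^{2k}(M;\Z/8)$ of $c$. Set $\bar c'\coloneqq\Phi_{\Z/8}(\bar c)$. Since $\bar c$ lifts to $\Z$, its integral Bockstein vanishes, which (as above) is a finite-coefficient condition preserved by $\Phi$, so $\bar c'$ also lifts to some $c'\in H^{2k}(N;\Z)$; and the characteristic condition on $c$ is an $\FF_2$-identity on cup products restricted to integer-reducible classes, which is transported by $\Phi$ to the analogous condition for $c'$. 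Van der Blij applied to $N$ then yields $\sigma(N)\equiv\bar c'^2[N]\equiv\bar c^2[M]\equiv\sigma(M)\pmod 8$.

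The main obstacles I foresee are twofold. First, verifying that $\Phi_{\FF_2}$ is Steenrod-equivariant requires a careful comparison between the Steenrod operations on continuous cohomology of profinite groups and the classical operations on $H^\ast(M;\FF_2)$ via the goodness identification; this is the conceptual heart of the argument. Second, in (4) the $\Z/8$ fundamental classes of $M$ and $N$ must correspond under $\Phi_{\Z/8}$ rather than differ by a unit in $(\Z/8)^\times$; this is a book-keeping issue likely resolved by making an appropriate simultaneous choice of orientations driven by the identification $\widehat G\cong\widehat H$.
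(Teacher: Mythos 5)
For parts (1) and (2) your route is the paper's: asphericity plus goodness give $\FF_2$-algebra isomorphisms compatible with Steenrod squares on continuous cohomology (\Cref{naturalsq}), Wu's formula then transports the Wu and Stiefel--Whitney classes (\Cref{thmx:Invariance_of_SW_classes}), equality of Stiefel--Whitney numbers plus Thom gives cobordism (\Cref{equalSWnumbers}, \Cref{cobordant}), and spin is $w_1=w_2=0$ (\Cref{spin}). For part (3) you take a genuinely different and more elementary route: the paper builds a comparison of $H^\ast(G;\widehat{\Z})$ with continuous cohomology (\Cref{profinite_completion_of_cohomology}, \Cref{profinite_algebra_iso}, and the diagram \eqref{bigdiagram}) and detects $W_3$ through the injection $H^{k+1}(G;\Z)\hookrightarrow H^{k+1}(G;\widehat{\Z})$, whereas you observe that $W_3=\beta(w_2)$ is $2$-torsion in the finitely generated group $H^3(M;\Z)$, so $W_3=0$ if and only if the finite-coefficient Bocksteins $\beta_{2^r}(w_2)\in H^3(M;\Z/2^r)$ vanish for all $r$; these are natural for short exact sequences of finite modules and hence preserved by the comparison maps. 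That argument is correct (the kernel of reduction mod $2^r$ is $2^rH^3(M;\Z)$, and a $2$-torsion class in $2^rH^3(M;\Z)$ vanishes once $2^r$ exceeds the exponent of the $2$-primary torsion), and it buys you a proof of \Cref{detectSW_Z} and \Cref{spinC} that never leaves the category of finite modules; the paper's heavier machinery is, however, reused for its signature argument.

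Part (4) is where there is a genuine gap, and it is exactly the point you dismiss as book-keeping. Your van der Blij argument needs $\langle \Phi_{\Z/8}(\alpha),[N]_8\rangle=\langle\alpha,[M]_8\rangle$ for top-degree classes $\alpha$. What you actually know is only that $\Phi_{\Z/8}$ is an isomorphism on $H^n(-;\Z/8)\cong\Z/8$, so $\langle \Phi_{\Z/8}(\alpha),[N]_8\rangle=u\cdot\langle\alpha,[M]_8\rangle$ for some unit $u\in(\Z/8)^\times=\{1,3,5,7\}$, and choosing orientations can only change $u$ by a sign; nothing in the data forces $u\equiv\pm1\pmod 8$. (Already for $\widehat G=\widehat{\Z}^n$, automorphisms act on top-degree $\Z/8$-cohomology by arbitrary units, namely $\det$ mod $8$, so the abstract isomorphism $\Theta$ cannot by itself pin $u$ down.) With $u\equiv\pm3$ your computation yields only $\sigma(N)\equiv u\,\sigma(M)\pmod 8$, which is strictly weaker: it does not separate, e.g., residues $2$ and $6$. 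So the step "resolved by an appropriate simultaneous choice of orientations" does not close the argument; you would need an additional input (for instance exploiting that both forms are unimodular with determinant $\pm1$, which constrains $u$ only when the rank is odd) to normalise the top-degree identification. The paper's own proof of \Cref{intersectionForm} proceeds differently: it identifies the $\Z_p$-forms on $(H^{2m}(-;\Z_p))^{\tf}$ via pro-$p$ completion of integral cohomology, compares the Conway--Sloane invariants (dimension, $p$-excesses, oddity), and concludes via the reciprocity formula \eqref{sum_formula} and \Cref{Hasse}; note that a fully careful reading of that comparison also has to address how the top-degree identification is normalised, but in any case your evaluation-pairing route confronts the unit ambiguity head-on and, as written, does not resolve it. (A small side remark: the "short calculation using $\Sq^{2k}$" is unnecessary --- that $c^2[M]\bmod 8$ depends only on the mod-$8$ reduction of $c$ is immediate from naturality of reduction and the $\Z/8$ evaluation.)
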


The class of good groups is vast: including all virtually compact special groups  groups (in the sense of \cite{HW08}) \cite[Proposition 3.2]{WZ} and hence all cubulated negatively curved manifolds (after Agol \cite{Agol}).  This latter class includes all closed hyperbolic 3-manifold groups and all arithmetic lattices of simplest type in hyperbolic $n$-space \cite{BHW} (this includes examples from Chen's previously cited work \cite{Chen2025}). Further examples include all virtually polycyclic groups \cite[Theorem on pg~1708]{Lorensen2008}.  The property behaves well with respect to extensions \cite[Theorem~2.5]{Lorensen2008}.

\begin{remark}
    Higher rank lattices with the congruence subgroup property in semisimple Lie groups are never good \cite[Proposition 5.1]{GrunewaldJaikinZalesskii2008}. Note that without goodness one can construct examples of torsion-free higher rank uniform lattices $\Lambda_1\leqslant G_1$ and $\Lambda_2\leqslant G_2$ with isomorphic profinite completions such that the resulting closed locally symmetric manifolds are not even of the same dimension, see \Cref{ex:diffDims}.
\end{remark}

%Chen's previous work \cite{Chen2025} provides many examples of closed hyperbolic manifolds, which are arithmetic of simplest type and do not carry a spin$^\CC$ structure, to which our theorem applies. 
A key step in \Cref{thmx:A} is in relating the Stiefel--Whitney classes of the manifolds $M$ and $N$.  The precise relation is provided by the next theorem. Recall that a \emph{Poincar\'e complex} is a finite CW complex satisfying the conclusion of Poincar\'e duality.

\begin{duplicate}[\Cref{thmx:Invariance_of_SW_classes}]
    Let $M$ and $N$ be connected aspherical Poincar\'e complexes with good fundamental groups $G$ and $H$, respectively.  If $\Theta\colon \widehat G\to\widehat H$ is an isomorphism, then there exists an $\FF_2$-algebra isomorphism $\theta\colon H^\ast(N;\FF_2)\to H^\ast(M;\FF_2)$ such that $\theta(w_i(N))=w_i(M)$ and $\theta(w(N))=w(M)$.
\end{duplicate}

\subsection*{Structure of the paper}
In \Cref{sec:prelims} we recount the necessary background from algebraic topology, (profinite) group cohomology, and quadratic form theory we will need.  

In \Cref{sec:Steenrod} we recount Epstein's theory of Steenrod squares and prime power operations \cite{Epstein} and explain how they apply to both group cohomology and profinite group cohomology.  

In \Cref{sec:detect} we prove our main results.  More precisely, in \Cref{sec:detect:SW} we prove \Cref{thmx:Invariance_of_SW_classes} and consider analogous statements for manifolds with boundary.  In \Cref{sec:detect:spin} we prove profinite invariance of the existence of spin structures (\Cref{spin}).  In \Cref{sec:detect:SWnums} we prove profinite invariance of the Stiefel--Whitney numbers (\Cref{equalSWnumbers}) and deduce our result about (unoriented) cobordisms \Cref{cobordant}.  In \Cref{sec:detect:Bocksteins} we show that certain Bockstein maps can be detected in profinite group cohomology.  In \Cref{sec:detect:intSW} we show that non-vanishing of integral Stiefel--Whitney classes is a profinite invariant (\Cref{detectSW_Z}) and prove profinite invariance of the existence of spin$^\CC$ structures (\Cref{spinC}).  In \Cref{sec:detect:Pmod3} we show that the modulo $3$ reductions of the integral Pontryagin classes are profinite invariants (\Cref{thm:PontrayginClassesMod3}) and deduce that the Pontryagin numbers modulo $3$ are profinite invariants (\Cref{thm:PontrayginMod3}).  In \Cref{sec:detect:sigma} we prove that the signature of the intersection form modulo $8$ is a profinite invariant (\Cref{intersectionForm}).  In \Cref{sec:detect:thmA} we prove \Cref{thmx:A}.  

Finally, in \Cref{sec:example} we detail an example of a pair of aspherical manifolds $M$ and $N$ with $\pi_1 M$ and $\pi_1N$ residually finite and satisfying $\widehat{\pi_1 M}\cong \widehat{\pi_1 N}$ but $\dim M=128$ and $\dim N=112$.

\subsection*{Acknowledgements}
Sam Hughes was supported by a Humboldt Research Fellowship at Universit\"at Bonn. Andrew Ng was supported by funding from the European Union (ERC, SATURN, 101076148).  Both authors were supported by the Deutsche Forschungsgemeinschaft (EXC-2047/1 - 390685813). The authors thank Wolfgang L\"uck for helpful conversations and suggesting we consider the signature.  The authors thank Holger Kammeyer for providing us with \Cref{ex:diffDims}.  This paper forms part of the second author's PhD thesis.

%%%%%%%%%%%%%%%%%%%%%%%%%%%%%%%%%%%%%%%%%%%%%%%%%%%%%%%%%%
\section{Preliminaries}\label{sec:prelims}

\subsection{Notation} \label{sec:prelims:notation}
In different sections of the paper we will work in different module categories. At the start of each section we will say which module categories are relevant for that section. We also use the following conventions:
\begin{itemize}
    \item An algebra/ring/module $M$ is said to be finite if $|M| < \infty$. 
    \item We use $\mathbf{H}_*$ and $\mathbf{H}^*$ to denote continuous homology and cohomology respectively. 
    \item For a finitely generated module $M$ over a commutative PID $R$, we denote by $M^{\tf}$ the $R$-submodule that is the torsion-free part of $M$. 
    \item For a (rational) prime $p$ we denote by $\Z_p$ the $p$-adic integers and $\FF_p$ the field with $p$ elements.
\end{itemize}

\subsection{Background on characteristic classes}\label{sec:prelims:characteristic}

We first define Stiefel--Whitney classes.

\begin{defn}[Stiefel--Whitney classes of a space]
    Let $X$ be a paracompact topological space and let $E\to X$ be a vector bundle.  The \emph{total Stiefel--Whitney class} $w(E)=\sum_{k\geq0}w_k(E)\in H^\ast(X;\FF_2)$ is the unique class satisfying:
    \begin{enumerate}
        \item The class $w(L)$ of the tautological line bundle $L\to \RP^1$ is nontrivial.
        \item $w_0(E)=1$ and $w_k(E)=0$ for $k>\rank E$.
        \item $w(E\oplus F)=w(E)\smile w(F)$.
        \item For any map $f\colon Y\to X$ we have $w(f^\ast E)=f^\ast (w(E))\in H^\ast(Y;\FF_2)$.
    \end{enumerate}
    We call the class $w_k(E)\in H^k(X;\FF_2)$ the $k$th \emph{Stiefel--Whitney class}.
\end{defn}

\begin{defn}[Stiefel--Whitney classes of a manifold]
For a compact connected smooth $n$-manifold $M$ we define the \emph{Stiefel--Whitney classes} $w_k(M)$ of $M$ to be the Stiefel--Whitney classes of its tangent bundle $TM\to M$.
\end{defn}

%The question of which compact orientable $n$-hyperbolic manifolds admit spin structures has long been of interest. It is known that any compact orientable (not necessarily hyperbolic) 3-manifold admits a spin structure (in fact parallelisable), and classical work of Sullivan \cite{Sullivan1979} shows that every hyperbolic manifold $M$ has a finite cover which is stably-parallelisable, in particular admits a spin structure. In contrast, recent work has produced, for all $n \geq 4$, examples of hyperbolic $n$-manifolds which don't admit a spin structure in both the closed \cite{Martelli2020} and cusped \cite{RioloRizzi2025} cases. \cref{detectspin} implies that the manifolds $M$ in the closed case have $\pi_1(M)$ not even profinitely equivalent to the fundamental groups of closed manifolds admitting a spin structure.  
Let $M$ be a Poincar\'e complex and let
\[\langle.,.\rangle : H^{n-k}(M) \times H_k(M) \rightarrow \FF_2 \]
be the pairing of cohomology and homology. \( M \) satisfies Poincaré duality for \( \FF_2\) (co)homology, hence we have
\[\mathrm{Hom}(H^{n-k}(M), \FF_2) \cong H_{n-k}(M) \cong H^k(M),\]
where a cohomology class \( y \in H^k(M) \) corresponds to the homomorphism \( x \mapsto \langle y \cup x, [M] \rangle \). In particular, one element of \( \mathrm{Hom}(H^{n-k}(M), \FF_2)\) is uniquely determined by the expression \( x \mapsto \langle \Sq^k(x), [M] \rangle \).  Note here that $\Sq^k\colon H^{n-k}(M;\FF_2)\to H^n(M;\FF_2)$ is the $k$th Steenrod square, we postpone the definition until \Cref{sec:Steenrod}. 
\begin{defn}
 Under the above isomorphism, the unique class corresponds to a degree \( k \) cohomology class \( v_k \). We call $v_k$ the \emph{$k$th Wu class of $M$} and $1+\sum_{i=1}^n v_k$ the \emph{total Wu class}.
\end{defn}

Note that we have the following:
\begin{align*}
   & \langle v_k \cup x, [M] \rangle = \langle \text{Sq}^k(x), [M] \rangle, \text{for all } x \in H^{n-k}(M), \text{and} \\
   & \langle v \cup x, [M] \rangle = \langle \text{Sq}(x), [M] \rangle, \text{for all } x \in H^*(M)
\end{align*}

For a closed connected smooth manifold, the Wu classes are related to the Stiefel--Whitney classes by the following theorem of Wu.

\begin{thm}[Wu]~\cite{Wu1950}\label{WuTheorem}
    Let $M$ be a closed connected smooth $n$-manifold. Then $w_k=\sum_{i=0}^k\Sq^{k-i}(v_i)$.
\end{thm}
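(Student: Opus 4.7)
The plan is to prove the equivalent global identity $w(TM) = \Sq(v)$; extracting the degree-$k$ component then yields the stated formula. The strategy uses the tubular neighborhood of the diagonal $\Delta\colon M \hookrightarrow M \times M$, whose normal bundle is canonically $TM$. This gives a Thom--Pontryagin collapse $q\colon M \times M \to T(TM)$ under which the Thom class $U \in H^n(T(TM); \FF_2)$ pulls back to the diagonal class $q^*(U) = \Delta^! \in H^n(M \times M; \FF_2)$. By K\"unneth together with Poincar\'e duality over $\FF_2$, this class decomposes as $\Delta^! = \sum_\alpha e_\alpha \otimes e_\alpha^\vee$ for any homogeneous $\FF_2$-basis $\{e_\alpha\}$ of $H^*(M; \FF_2)$, where the dual basis $\{e_\alpha^\vee\}$ is characterised by $\langle e_\alpha \cup e_\beta^\vee, [M]\rangle = \delta_{\alpha \beta}$.

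Thom's identity $\Sq(U) = \phi(w(TM))$ combined with the naturality of Steenrod squares gives $\Sq(\Delta^!) = q^*\Sq(U) = q^*\phi(w(TM))$. The geometric identification $q^*\phi(y) = (y \otimes 1) \cdot \Delta^!$ follows from the tubular neighborhood theorem: under the diffeomorphism $N(\Delta) \cong D(TM)$ the disk bundle projection corresponds to the first-factor projection $p_1\colon M \times M \to M$. Combined with the Cartan formula $\Sq(\Delta^!) = \sum_\alpha \Sq(e_\alpha) \otimes \Sq(e_\alpha^\vee)$ for the external cup product, this produces the master identity
\[
\sum_\alpha \Sq(e_\alpha) \otimes \Sq(e_\alpha^\vee) \;=\; (w(TM) \otimes 1) \cdot \Delta^! \;=\; \sum_\alpha (w(TM) \cdot e_\alpha) \otimes e_\alpha^\vee
\]
in $H^*(M \times M; \FF_2)$.

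To conclude, I would pair the second tensor factor with the mod-$2$ fundamental class $[M]$. The right-hand side collapses to $w(TM)$, since only the term $e_\alpha = 1$ contributes (the unique basis element whose dual $e_\alpha^\vee$ has top degree $n$). The left-hand side becomes $\sum_\alpha \langle \Sq(e_\alpha^\vee), [M]\rangle \cdot \Sq(e_\alpha)$, which by the defining identity of the Wu class equals $\sum_\alpha \langle v \cup e_\alpha^\vee, [M]\rangle \cdot \Sq(e_\alpha)$. Writing $v = \sum_\beta c_\beta e_\beta$ and using orthogonality gives $\langle v \cup e_\alpha^\vee, [M]\rangle = c_\alpha$, so this sum equals $\sum_\alpha c_\alpha \Sq(e_\alpha) = \Sq\!\left(\sum_\alpha c_\alpha e_\alpha\right) = \Sq(v)$. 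The main obstacle is verifying the geometric identification $q^*\phi(y) = (y \otimes 1) \cdot \Delta^!$ rigorously, which requires careful tracking of the Thom isomorphism through excision and the tubular neighborhood diffeomorphism; the remainder is formal manipulation via Cartan and Poincar\'e duality.
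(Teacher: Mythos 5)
The paper does not prove this statement at all --- it is quoted directly from Wu's paper \cite{Wu1950} and used as a black box --- so there is no internal argument to compare against; what you have written is the classical Thom--Milnor proof (essentially Milnor--Stasheff, \S 11), and it is correct. Your chain of identities checks out: the Thom class of the normal bundle of the diagonal (which is $TM$) pulls back to the diagonal class, whose K\"unneth decomposition over $\FF_2$ is $\sum_\alpha e_\alpha\otimes e_\alpha^{\vee}$ with $\langle e_\alpha\cup e_\beta^{\vee},[M]\rangle=\delta_{\alpha\beta}$; Thom's formula $\Sq(U)=\phi(w(TM))$ plus naturality and the Cartan formula give the master identity; and slanting the second factor against $[M]$ correctly isolates $w(TM)$ on one side (only $e_\alpha=1$ contributes, using connectedness) and $\Sq(v)$ on the other via the defining property $\langle \Sq(x),[M]\rangle=\langle v\cup x,[M]\rangle$ of the total Wu class. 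The one step you rightly flag, $q^{*}\phi(y)=(y\otimes 1)\cup\Delta^{!}$, is where the only real work lies: it follows because the disk-bundle projection of the tubular neighborhood and the restriction of the first-factor projection $p_1$ are both retractions onto the diagonal, hence homotopic, and because the $H^{*}$-module structure on the Thom class is natural through the excision and collapse maps; this is exactly the content of the lemmas preceding Wu's theorem in the standard treatment (including the symmetry $(a\times 1)\cup\Delta^{!}=(1\times a)\cup\Delta^{!}$, which resolves any ambiguity about which factor one projects to). So your proposal is a complete and correct route to a result the paper only cites; the trade-off is that it requires smoothness (or at least a normal structure on the diagonal) to produce the Thom class, whereas the paper deliberately turns Wu's formula around and uses it as the \emph{definition} of Stiefel--Whitney classes for general Poincar\'e complexes, where no tangent or normal bundle is available.
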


The definition of the Wu classes only requires the existence of the Poincar\'e duality isomorphisms.  Thus, we may instead take this as the definition of the Stiefel--Whitney classes for a general Poincar\'e complex.

\begin{defn}[Stiefel--Whitney classes of a Poincar\'e complex]
For a compact connected Poincar\'e complex $M$ of dimension $n$ we define the \emph{Stiefel--Whitney classes} $w_k(M)$ of $M$ by the formula $w_k=\sum_{i=0}^k\Sq^{k-i}(v_i)$.
\end{defn}

\subsubsection{Stiefel--Whitney numbers}

\begin{defn}[Stiefel--Whitney numbers]
    Let $M$ be a compact connected Poincar\'e complex of dimension $n$. Given positive numbers $i_1, \cdots, i_k$ such that $\sum_{j=1}^k i_j=n$, this gives rise to a class $\prod_{j=1}^k w_{i_j}(M) \in H^n(M;\FF_2)$. Pairing this with the fundamental class of $M$ gives a number in $\FF_2$ known as a \emph{Stiefel--Whitney number} of $M$.
\end{defn}

It is a celebrated result of Thom that when $M$ is a closed connected smooth manifolds, $M$ is the boundary of a compact smooth manifold if and only if all of its Stiefel--Whitney numbers vanish \cite{Thom54}.  Equivalently, two $n$-manifolds $M$ and $N$ are cobordant if and only if they have the same Stiefel--Whitney numbers.

\subsubsection{Integral Stiefel--Whitney classes}

\begin{defn}[Integral Stiefel--Whitney classes]
    Let $\beta\colon H^k(M;\FF_2)\to H^{k+1}(M;\Z)$ denote the Bockstein homomorphism induced by the short exact sequence
    \[0\to \Z \xrightarrow{\times 2} \Z\to \Z/2 \to 0. \]
    We define $W_{k+1}=\beta(w_k)$ to be the \emph{$(k+1)$th integral Stiefel--Whitney class.}
\end{defn}

\begin{remark}
    For a compact manifold $M$, Stiefel--Whitney classes give rise to the following obstructions:
    \begin{enumerate}
        \item $w_1=0$ if and only if $M$ is orientable;
        \item more generally, the first $2^k-1$ Stiefel--Whitney classes vanish if and only if $M$ is $k$-orientable \cite{Hoekzema2020};
        \item $w_1=w_2=0$ if and only if $M$ admits a spin structure \cite{Haefliger1956};
        \item $W_3=0$ if and only if $M$ admits a pin$^\CC$ structure (see \cite[\S2]{Chen2025});
        \item $w_1=W_3=0$ if and only if $M$ admits a spin$^\CC$-structure \cite[Corollary D.4]{LawsonMichelsohn1989}.
    \end{enumerate}
\end{remark}

\subsubsection{Relative Stiefel--Whitney classes}
Kervaire \cite{Kervaire1957} defined relative characteristic classes with a view to extending certain results on closed manifolds to compact manifolds with boundary. We recall the parts of his work which are relevant for us.

Let $K$ be a finite CW-complex and $L$ a (non-empty) subcomplex of $K$. $(K,L)$ is a Poincar\'e duality pair of dimension $n$ over $\FF_2$ if, for any $0 \leq q \leq n$, $H^n(K, L ; \FF_2) = \FF_2$ and the cup product pairing
\[\cup: H^{n-q}(K, L ; \FF_2) \times H^q(K; \FF_2) \to H^n(K, L ; \FF_2) \]
is a perfect pairing. This definition naturally generalises to other coefficient rings. The Wu classes $(v_{K,L})_k \in H^k(K; \FF_2)$ are defined analogously by the requirement that, for any relative class $x \in H^{n-k}(K, L ; \FF_2)$ 
\[\Sq^k(x)=(v_{K,L})_k \cup x.\]
By \cite[Lemma 6.1]{Kervaire1957}, doubling $K$ along $L$ results in a Poincar\'e complex $M$. This has Wu classes $(v_M)_k$. Let $\iota: K \to M$ be the inclusion map. Kervaire then proves that \cite[Lemma 7.3]{Kervaire1957} \[ \iota^*((v_M)_k)= (v_{K,L})_k. \]
We define the total Wu class for the pair as in the absolute case.
When $K$ is a smooth manifold and $L$ is its boundary, $(K,L)$ is a Poincar\'e duality pair. Furthermore, we have that
\[w(K)=\iota^*(w(M))= \iota^*(\Sq (v_M))= \Sq(\iota^*v_M)= \Sq(v_{K,L}) \]
Hence, for a general Poincar\'e duality pair $(K,L)$, we may define its Wu classes as above, and its Stiefel--Whitney classes by the property \[w(K):=\Sq(v_{K,L}). \]
This is compatible with the original definition in the case that $K$ is a smooth manifold with boundary $L$.

\subsubsection{Chern and Pontryagin classes}\label{sec:chern_pontryagin}
In this section we define Chern classes, Pontryagin classes, and Pontryagin numbers.

\begin{defn}[Chern classes of a space]
    Let $X$ be a paracompact space and $E\to X$ be a complex vector bundle.  The \emph{total Chern class} $c(X)=\sum_{k\geq 0}c_k(X)\in H^\ast(X;\Z)$ is the unique class satisfying:
    \begin{enumerate}
        \item The class $c(L)$ of the tautological line bundle $L\to \CP^k$ equals $1-H$ where $H$ is Poincar\'e dual to the hyperplane $\CP^{k-1}\subset \CP^k$.
        \item $c_0(E)=1$ for all $E$.
        \item  $c(E\oplus F)=c(E)\smile c(F)$.
        \item For any map $f: Y\to X$ we have have $c(f^\ast E)=f^\ast(c(E))\in H^\ast(Y;\Z)$.
    \end{enumerate}
    We call the class $c_k(E)\in H^{2k}(X;\Z)$ the $k$th \emph{Chern class}.
\end{defn}

\begin{defn}[Pontryagin classes of a space]
    Let $X$ be a paracompact space and $E\to X$ a real vector bundle.  The $k$th Pontryagin class of $E$ is defined to be $p_k(E)=(-1)^kc_{2k}(E\otimes \CC)\in H^{4k}(X;\Z)$.
\end{defn}

\begin{defn}[Pontryagin classes of a manifold]
    For a compact connected smooth $n$-manifold $M$ we define the \emph{Pontryagin classes} $p_k(M)\in H^{4k}(M;\Z)$ of $M$ to be the Pontryagin classes of its tangent bundle $TM\to M$.
\end{defn}

In general the Pontryagin classes are not homotopy invariants \cite[Corollary~9.3]{Milnor1964} (see also \cite{Novikov1964,Tamura1957}).  However, for a smooth closed connected $n$-manifold $M$ Hirzebruch showed that the images $\overline p_k$ of the Pontryagin class $p_k$ under the modulo $3$ reduction map $H^{4k}(M;\Z)\to H^{4k}(M;\FF_3)$ are homotopy invariants \cite[pg~215]{Hirzebruch1954}.  To show this, Hirzebruch introduces modulo $3$ analogues of the Wu classes $s_3^i$ which are determined by the Steenrod $3$rd power operations $P^i$ in the sense that $P_i(u)=s_3^i\smile u\in H^n(M;\FF_3)$ for all $u\in H^{n-2i(p-1)}(M;\FF_3)$.  See \Cref{sec:Steenrod} for a definition of the operations $P^i$.  Hirzebruch then shows that $\overline p_k$ is a polynomial combination of the classes $s_3^i$.  Since this result is entirely algebraic one can define the modulo $3$ Pontryagin classes of any connected Poincar\'e duality space analogously to the Stiefel--Whitney classes.

\subsection{Profinite cohomology theory}\label{sec:prelims:profinite}
In this section we describe the cohomology theory of profinite groups we will use, which is defined analogously to the cohomology theory of discrete groups with all functors replaced by their continuous versions. We refer the reader to \cite{Brown1982} for background on cohomology of discrete groups. 

Let $R$ be a profinite ring (with identity). 
\begin{defn}
    A topological left $R$-module is a topological abelian group $M$ with a continuous function $R \times M \to M, (r,m) \mapsto r \cdot m$ such that 
    \[r \cdot (m_1+m_2) = r \cdot m_1 + r \cdot m_2 \text{ and } r_1 \cdot( r_2 \cdot m) = (r_1 \cdot r_2) \cdot m\]

    A topological $R$-module is \emph{profinite} if $M$ is profinite as an abelian group, and \emph{discrete torsion} if it has the discrete topology and all elements have finite order.
\end{defn}

Following \cite{Wilkes2024}, we denote by $\PMod(R)$ the category of profinite $R$-modules, $\DMod(R)$ the category of discrete torsion $R$-modules, and $\FMod(R)=\PMod(R)\cap \DMod(R)$ the category of (discrete) finite $R$-modules. It is an important fact that both $\PMod (R)$ and $\DMod (R)$ are abelian categories \cite[Theorem 6.1.2]{Wilkes2024}, and furthermore $\PMod (R)$ has enough projectives \cite[Corollary 6.1.21]{Wilkes2024}. However, it does not in general have enough injectives. In certain circumstances we can still build suitable resolutions and apply the continuous versions of the $\Tor$- and $\Ext$-functors, which we denote by $\bTor$ and $\bExt$, respectively, to compute continuous profinite cohomology. 
\begin{defn}
    Let $R$ be a profinite ring.  Let $Z$ be a profinite $R$-module and let $(F_n)$ be a projective resolution of $Z$ over $R$. Let $M$ be a discrete torsion $R$-module.  We define \[\bExt_R^k(Z, M)=H_k(\bHom_R(F_\bullet, M)),\]
    that is, the right derived functors of the continuous $R$-homomorphisms functor $\bHom_R(Z,-)$.  Similarly, we define $\bTor_k^R( -, M)$ as the (left) derived functors of $ - \widehat{\otimes} M$.  We refer the reader to \cite[Section 6.1]{RZ10} for details.
    % \begin{itemize}
%
 %   
    % \item Apply $\bHom_R( -, M)$ to obtain a chain complex
    % \[\begin{tikzcd}
    %     \bHom_R( F_0, M) \arrow[r, "d^1"] & \bHom_R( F_1, M) \arrow[r] & \cdots \arrow[r] & \bHom_R( F_n, M) \arrow[r, "d^{n+1}"] & \bHom_R( F_{n+1}, M) \arrow[r] & \cdots
    % \end{tikzcd}\]
    % of abelian groups. The continuous $\Ext$-groups $\bExt_R^n(Z, M)$ are defined to be the homology groups of this chain complex:
    % \[\bExt_R^n(Z, M) := \mathrm{ker} d^{n+1} / \mathrm{im } d^n, \bExt_R^0(Z, M) = \mathrm{ker} d^1 \]
    %         \item Similarly, we define $\bTor_*^R( -, M)$ as the (left) derived functors of $ - \widehat{\otimes} M$.
    % \end{itemize}
\end{defn}

Let $\widehat G$ be a profinite group. We denote by $\widehat{\Z}\llbracket \widehat G \rrbracket$ the completed group algebra of $\widehat G$ \cite[Section 6.3]{Wilkes2024}.  

%It is an important fact that for any profinite ring

\begin{defn}\label{profinite homology}
Let $\widehat G$ be a profinite group.
\begin{enumerate}[leftmargin=*]
\item For $M\in \PMod(\widehat{\Z}\llbracket \widehat G \rrbracket)=\PMod(\widehat{\Z}\llbracket \widehat G \rrbracket)$, the \emph{profinite homology of $\widehat G$ with coefficients in $M$} is defined as 
\[\mathbf{H}_k(\widehat G;M)={\bTor}_k^{\widehat G}(\widehat{\Z},M) \in \PMod(\widehat{\Z}) .\]

\item For $A\in \DMod(\widehat{\Z}\llbracket \widehat G \rrbracket)$, the \emph{profinite cohomology of $\widehat G$ with coefficients in $A$} is defined as 
\[\mathbf{H}^k(\widehat G;A)=\bExt^k_{\widehat G}(\widehat{\Z},A) \in \DMod(\widehat{\Z}).\]

\end{enumerate}

\end{defn}

\begin{defn}
\begin{enumerate}
\item We say that $(G,\mathcal{H})$ is a {\em discrete group pair} if $G$ is a discrete group, and $\mathcal{H}=\{H_i\}_{i=1}^n$ is an ordered, non-empty, finite collection of subgroups in $G$.
\item We say that $(\widehat G,\mathcal{S})$ is a {\em profinite group pair} if $\widehat G$ is a profinite group, and $\mathcal{S}=\{S_i\}_{i=1}^n$ is an ordered, non-empty, finite collection of closed subgroups in $\widehat G$.
\end{enumerate}
\end{defn}

We now define relative profinite (co)homology following \cite[Definition 2.1]{Wilkes2019}. 

\begin{defn}\label{relative profinite homology}
Given a profinite group pair $(\widehat G,\mathcal{S})$, let $\mathbf{\Delta}_{\widehat G,\mathcal{S}}$ be the kernel of the augmentation homomorphism
$$ \mathop{\oplus}_{i=1}^{n}\widehat{\Z} \llbracket \widehat G/S_i \rrbracket \longrightarrow \widehat{\Z}.$$
Then $\mathbf{\Delta}_{\widehat G,\mathcal{S}}\in \PMod(\widehat{\Z}\llbracket \widehat G \rrbracket)$.
\begin{enumerate}[leftmargin=*]
\item For $M\in \PMod(\widehat{\Z}\llbracket \widehat G \rrbracket)$, the {\em profinite homology of $\widehat G$ relative to $\mathcal{S}$ with coefficients in $M$} is defined as 
\[\mathbf{H}_{k+1}(\widehat G,\mathcal{S};M)=\bTor_{k}^{\widehat G}(\mathbf{\Delta}_{\widehat G,\mathcal{S}}^\perp,M)\in \PMod(\widehat{\Z}).\]

\item For $A\in \DMod(\widehat{\Z}\llbracket \widehat G \rrbracket)$, the {\em profinite cohomology of $\widehat G$ relative to $\mathcal{S}$ with coefficients in $A$} is defined as 
 \[\mathbf{H}^{k+1}(\widehat G,\mathcal{S};A)=\bExt^k_{\widehat G}(\mathbf{\Delta}_{\widehat G,\mathcal{S}},A)\in \DMod(\widehat{\Z}).
\]

\end{enumerate}
\end{defn}
The discrete and profinite settings are related by the following:

\begin{prop}\cite[Proposition 5.11]{Xu2025}
Let $(G,\mathcal{H}=\{H_i\}_{i=1}^{n})$ be a discrete group pair. Let $\overline{H_i}$ be the closure of the image of $H_i$ in $\widehat{G}$ through the canonical homomorphism $\iota:G\to \widehat{G}$, and let $\overline{\mathcal{H}}=\{\overline{H_i}\}_{i=1}^{n}$. Then $(\widehat{G},\overline{\mathcal{H}})$ is a profinite group pair. 
\begin{enumerate}
\item For $M\in\PMod(\widehat{\Z}\llbracket \widehat G \rrbracket)$, $M$ can be viewed as a $\widehat{\Z}[G]$-module, and there is a natural homomorphism $H_\ast(G,\mathcal{H};M)\to \mathbf{H}_\ast(\widehat{G},\overline{\mathcal{H}};M)$ of $\widehat{\Z}$-modules induced by $\iota$ such that the following diagram commutes. 
\begin{equation*}
\begin{tikzcd}
\cdots \arrow[r] & {H_{k+1}(G,\mathcal{H};M)} \arrow[d, "\iota_\ast"] \arrow[r, "\partial"] & \mathop{\oplus}\limits_{i=1}^{n} H_k(H_i;M) \arrow[r] \arrow[d, "(\iota|_{H_i})_\ast"] & H_k(G;M) \arrow[r] \arrow[d, "\iota_\ast"] & {H_{k}(G,\mathcal{H};M)} \arrow[r, "\partial"] \arrow[d, "\iota_\ast"] & \cdots \\
\cdots \arrow[r] & {\mathbf{H}_{k+1}(\widehat{G},\overline{\mathcal{H}};M)} \arrow[r, "\partial"]   & \mathop{\oplus}\limits_{i=1}^{n} \mathbf{H}_k(\overline{H_i};M) \arrow[r]                      & \mathbf{H}_k(\widehat{G};M) \arrow[r]              & {\mathbf{H}_{k}(\widehat{G},\overline{\mathcal{H}};M)} \arrow[r, "\partial"]   & \cdots
\end{tikzcd}
\end{equation*}
\item For   $A\in \DMod(\widehat{\Z}\llbracket \widehat G \rrbracket)$, $A$ can be viewed as a $\widehat{\Z}[G]$-module, and there is a natural homomorphism $\mathbf{H}^\ast(\widehat{G},\overline{\mathcal{H}};A)\to H^\ast(G,\mathcal{H};A)$ of $\widehat{\Z}$-modules induced by $\iota$ such that the following diagram commutes. 
\begin{equation*}
\begin{tikzcd}
\cdots \arrow[r] & {\mathbf{H}^{k}(\widehat{G},\overline{\mathcal{H}};A)} \arrow[r] \arrow[d, "\iota^\ast"] & \mathbf{H}^k(\widehat{G};A) \arrow[r] \arrow[d, "\iota^\ast"] & \mathop{\oplus}\limits_{i=1}^{n} \mathbf{H}^k(\overline{H_i};A) \arrow[r] \arrow[d, "(\iota|_{H_i})^\ast"] & {\mathbf{H}^{k+1}(\widehat{G},\overline{\mathcal{H}};A)} \arrow[d, "\iota^\ast"] \arrow[r] & \cdots \\
\cdots \arrow[r] & {H^{k}(G,\mathcal{H};A)} \arrow[r]                                               & H^k(G;A) \arrow[r]                                    & \mathop{\oplus}\limits_{i=1}^{n} H^k(H_i;A) \arrow[r]                                              & {H^{k+1}(G,\mathcal{H};A)} \arrow[r]                                               & \cdots
\end{tikzcd}
\end{equation*}
\end{enumerate}
\end{prop}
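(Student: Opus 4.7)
The plan is to compare the two defining short exact sequences of discrete and profinite relative (co)homology, apply derived functors, and identify everything via Shapiro's lemma. First, observe that $(\widehat G,\overline{\mathcal H})$ is a profinite group pair essentially by definition: each $\overline{H_i}$ is closed in $\widehat G$ as the closure of a subset, and the collection is ordered, non-empty, and finite. Next, construct a morphism between the defining short exact sequences, namely the discrete
$$0 \to \Delta_{G,\mathcal{H}} \to \bigoplus_{i=1}^n \widehat{\Z}[G/H_i] \to \widehat{\Z} \to 0$$
of $\widehat{\Z}[G]$-modules and the profinite
$$0 \to \mathbf{\Delta}_{\widehat G,\overline{\mathcal{H}}} \to \bigoplus_{i=1}^n \widehat{\Z}\llbracket \widehat G/\overline{H_i}\rrbracket \to \widehat{\Z} \to 0$$
in $\PMod(\widehat{\Z}\llbracket \widehat G\rrbracket)$. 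The map $\iota$ descends to a $G$-equivariant map $G/H_i \to \widehat G/\overline{H_i}$, well-defined because $\iota(H_i)\subseteq \overline{H_i}$, inducing $\widehat{\Z}[G/H_i] \to \widehat{\Z}\llbracket \widehat G/\overline{H_i}\rrbracket$ by $\widehat{\Z}$-linearity. These maps are compatible with the augmentations and so restrict to a map $\Delta_{G,\mathcal{H}} \to \mathbf{\Delta}_{\widehat G,\overline{\mathcal{H}}}$ on kernels.

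Second, pick a free resolution $F_\bullet \to \widehat{\Z}$ over $\widehat{\Z}[G]$ and a profinite projective resolution $\widehat F_\bullet \to \widehat{\Z}$ in $\PMod(\widehat{\Z}\llbracket \widehat G\rrbracket)$ (the latter exists by the enough-projectives result cited in the excerpt). Via the ring map $\widehat{\Z}[G] \to \widehat{\Z}\llbracket \widehat G\rrbracket$ induced by $\iota$, regard $\widehat F_\bullet$ as a $\widehat{\Z}[G]$-complex and lift the identity on $\widehat{\Z}$ to a chain map $F_\bullet \to \widehat F_\bullet$ via the standard comparison theorem. Applying $-\otimes_{\widehat{\Z}[G]} M$ and its profinite completed analogue $-\widehat{\otimes}_{\widehat G} M$ to the morphism of short exact sequences, together with this chain map, yields the two long exact sequences appearing in part (1) and the vertical comparison maps $\iota_\ast$; part (2) is obtained dually by replacing $\otimes$ with $\Hom(-,A)$ and $\bHom(-,A)$, in which case the vertical arrows reverse direction. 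Commutativity of the squares containing the connecting homomorphism $\partial$ follows from naturality of the snake lemma.

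Third, identify the middle columns via Shapiro's lemma. On the discrete side one has $\Tor_k^G(\widehat{\Z}[G/H_i],M)\cong H_k(H_i;M)$, and on the profinite side the profinite analogue gives $\bTor_k^{\widehat G}(\widehat{\Z}\llbracket \widehat G/\overline{H_i}\rrbracket,M)\cong \mathbf{H}_k(\overline{H_i};M)$, with the dual statements for $\Ext$ and $\bExt$.

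The main obstacle is verifying that under these Shapiro identifications the vertical map induced by $\iota$ coincides with $(\iota|_{H_i})_\ast$. This is a naturality statement for Shapiro's lemma along the subgroup inclusion $\iota|_{H_i}\colon H_i\hookrightarrow \overline{H_i}$, equivalently a compatibility between discrete and profinite restriction/induction under change of rings $\widehat{\Z}[G]\to \widehat{\Z}\llbracket \widehat G\rrbracket$ and $\widehat{\Z}[H_i]\to \widehat{\Z}\llbracket \overline{H_i}\rrbracket$. This ultimately reduces to a diagram chase in $\PMod(\widehat{\Z}\llbracket \widehat G\rrbracket)$ using that $\iota(H_i)$ is dense in $\overline{H_i}$, so that the induced completed tensor identifications are functorial with respect to the subgroup inclusion.
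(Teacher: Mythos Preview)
The paper does not prove this proposition at all: it is quoted verbatim as \cite[Proposition~5.11]{Xu2025} and used as a black box, so there is no in-paper argument to compare against. Your outline is the standard one and is essentially how the result is established in the cited reference: build the comparison map between the two augmentation short exact sequences, pass to derived functors via a chain map of projective resolutions along $\widehat{\Z}[G]\to\widehat{\Z}\llbracket\widehat G\rrbracket$, and invoke Shapiro's lemma (discrete and profinite) to rewrite the middle terms.

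Two small points worth tightening. First, the paper's Definition of relative profinite homology uses $\bTor_k^{\widehat G}(\mathbf{\Delta}_{\widehat G,\mathcal S}^{\perp},M)$ rather than $\bTor$ in $\mathbf{\Delta}$ itself; you should check which convention is in force (in Wilkes' and Xu's setup this is a Pontryagin-dual twist that is needed to stay inside the correct module category), since your sketch tacitly uses $\mathbf{\Delta}$ directly. Second, your last paragraph correctly flags the only genuine content, namely naturality of Shapiro's lemma across the change of rings $\widehat{\Z}[H_i]\to\widehat{\Z}\llbracket\overline{H_i}\rrbracket$; this is not a mere ``diagram chase'' but uses that a projective $\widehat{\Z}\llbracket\widehat G\rrbracket$-resolution restricts to a projective $\widehat{\Z}\llbracket\overline{H_i}\rrbracket$-resolution (closed subgroups of profinite groups are profinitely of finite index in the appropriate sense), together with the analogous discrete fact. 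Once that is said explicitly, your argument is complete.
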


\subsubsection{Goodness}

\begin{defn}
    A finitely generated group $G$ is said to be \emph{cohomologically good in the sense of Serre} if, for any finite discrete $\widehat{G}$-module $M$, the natural map $G \to \widehat{G}$ to its profinite completion induces an isomorphism $\mathbf{H}^*(\widehat{G},M) \to H^*(G,M)$.
\end{defn}
Where this does not cause confusion, we will simply say that $G$ is good. As explained in the introduction, there are many examples of good groups. The relationship between good groups and their profinite completions is particularly intimate. As has already been observed in \cite{KW16}, the cup product is defined for cohomology of profinite groups by precisely the same formulae as for abstract groups. Thus for a good group $G$ and a finite discrete $\widehat{G}$-module $M$, profinite completion $\iota: G \to \widehat{G}$ will induce an isomorphism of graded algebras $\iota^*: \mathbf{H}^*(\widehat{G},M) \to H^*(G,M)$ under the cup product. 

The condition on cohomology also implies the corresponding condition on homology.
\begin{prop} \cite[Proposition 6.2]{Wilkes2019} \label{goodhomology}
    Let $G$ be a good group of type $FP_{\infty}$. Then for any finite module $M$, the map $\iota_k: H_k(G,M) \to \mathbf{H}_k(\widehat{G},M)$, induced by profinitely completing $G$, is an isomorphism.
\end{prop}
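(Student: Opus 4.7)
The plan is to reduce the homology statement to the known cohomology statement via Pontryagin duality, exploiting the fact that $\QQ/\ZZ$ is an injective $\ZZ$-module and hence $\Hom_\ZZ(-,\QQ/\ZZ)$ is exact. Write $A^\ast=\Hom_\ZZ(A,\QQ/\ZZ)$ for the Pontryagin dual; recall that on finite abelian groups this is a contravariant equivalence with $A^{\ast\ast}\cong A$, while on profinite abelian groups it gives a contravariant equivalence with discrete torsion abelian groups.

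First I would establish finiteness: using the hypothesis that $G$ is of type $\FP_\infty$, pick a projective resolution $P_\bullet\to \ZZ$ by finitely generated $\ZZ G$-modules. Then $P_k\otimes_{\ZZ G}M$ is a finite abelian group (being a finite sum of copies of the finite module $M$), so $H_k(G;M)$, a subquotient, is finite. The analogous statement in the profinite world follows by profinite completing the resolution (or noting that, under goodness, the corresponding profinite resolution of $\widehat\ZZ$ is again finitely generated in each degree), so $\mathbf{H}_k(\widehat G;M)$ is a profinite abelian group of bounded size, hence finite.

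Next I would set up the duality isomorphisms. Applying the exact functor $\Hom_\ZZ(-,\QQ/\ZZ)$ to the chain complex $F_\bullet\otimes_{\ZZ G}M$, combined with the tensor--hom adjunction
\[
\Hom_\ZZ(F_k\otimes_{\ZZ G}M,\QQ/\ZZ)\;\cong\;\Hom_{\ZZ G}(F_k,M^\ast),
\]
produces a natural isomorphism $H^k(G;M^\ast)\cong H_k(G;M)^\ast$. The same construction in the continuous setting, using a projective resolution in $\PMod(\widehat\ZZ\llbracket\widehat G\rrbracket)$ and the profinite version of the adjunction, yields $\mathbf{H}^k(\widehat G;M^\ast)\cong \mathbf{H}_k(\widehat G;M)^\ast$, with Pontryagin duality between profinite and discrete torsion $\widehat\ZZ$-modules replacing the classical duality. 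Crucially, both of these isomorphisms are natural in the group, so they intertwine $\iota^\ast$ on cohomology with the Pontryagin dual of $\iota_\ast$ on homology.

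Finally I would combine the pieces: goodness of $G$ gives that $\iota^\ast\colon \mathbf{H}^k(\widehat G;M^\ast)\to H^k(G;M^\ast)$ is an isomorphism, and the duality identifications above translate this into an isomorphism $\iota_k^\ast\colon \mathbf{H}_k(\widehat G;M)^\ast\xrightarrow{\sim} H_k(G;M)^\ast$. Since both sides are finite by Step~1, dualising back recovers the desired isomorphism $\iota_k\colon H_k(G;M)\xrightarrow{\sim}\mathbf{H}_k(\widehat G;M)$. The main point requiring care is the naturality bookkeeping: one must verify that the UCT--style identification in Step~2, together with the adjunction, really is compatible on both sides with the comparison map induced by $G\to\widehat G$, so that the final isomorphism is genuinely $\iota_k$ rather than merely an abstract isomorphism of finite abelian groups of equal order.
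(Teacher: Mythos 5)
The paper gives no argument for this statement---it is quoted from Wilkes---so there is nothing in-text to compare against; judged on its own, your proof is correct and is essentially the standard argument (and the same one underlying the cited source): dualise the chain level by the exact functor $\Hom_\ZZ(-,\QQ/\ZZ)$, use tensor--hom adjunction to identify $H_k(G;M)^\ast\cong H^k(G;M^\ast)$ and its continuous analogue $\mathbf{H}_k(\widehat G;M)^\ast\cong \mathbf{H}^k(\widehat G;M^\ast)$, feed in goodness, and undualise using finiteness (which you correctly secure from $\FP_\infty$). The naturality point you flag is genuine but routine: both duality isomorphisms are induced by the same chain-level comparison $F_\bullet\otimes_{\ZZ G}M\to \widehat{F_\bullet}\mathbin{\widehat\otimes}M$, so the dual of $\iota_k$ is literally $\iota^k$ under your identifications. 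Worth noting: under the paper's standing hypotheses there is an even shorter route via \Cref{completingresolution}: since each $F_k$ is a finitely generated free $\ZZ G$-module and $M$ is finite, $F_k\otimes_{\ZZ G}M\cong \widehat{F_k}\mathbin{\widehat\otimes}_{\widehat\ZZ\llbracket\widehat G\rrbracket}M$ canonically, so the two complexes computing $H_\ast(G;M)$ and $\mathbf{H}_\ast(\widehat G;M)$ coincide and $\iota_k$ is an isomorphism without invoking Pontryagin duality at all; your route has the advantage of only using goodness on cohomology rather than the completed-resolution statement.
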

Furthermore, resolutions of $\Z$ as a $\Z[G]$-module give rise to resolutions of $\widehat{\Z}$ as a $\widehat{\Z} \llbracket \widehat{G} \rrbracket $-module.
\begin{prop}~\cite[Proposition~3.1]{JaikinZapirain2020} \label{completingresolution}
    Let $G$ be a good group of type $\FP$. Given a finite type projective resolution $F_\bullet$ of $\Z$ as a $\Z[G]$-module, profinite completion gives rise to a finite type projective resolution $\widehat{F_\bullet}$ of $\widehat{\Z}$ as a module over the completed group ring $\widehat{\Z} \llbracket \widehat{G} \rrbracket $.
\end{prop}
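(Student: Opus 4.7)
The plan is to establish two facts about $\widehat F_\bullet = F_\bullet \widehat\otimes_{\Z[G]} \widehat\Z\llbracket\widehat G\rrbracket$ (viewing completion as extension of scalars along the natural map $\Z[G] \to \widehat\Z\llbracket\widehat G\rrbracket$): first, that each term is finitely generated projective as a $\widehat\Z\llbracket\widehat G\rrbracket$-module, and second, that the augmented complex $\widehat F_\bullet \to \widehat\Z \to 0$ is exact.

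The projectivity claim is routine. Extension of scalars sends $\Z[G]^n$ to $\widehat\Z\llbracket\widehat G\rrbracket^n$ and preserves direct summands, so each $F_i$, being a summand of some $\Z[G]^{n_i}$, yields a summand $\widehat F_i$ of $\widehat\Z\llbracket\widehat G\rrbracket^{n_i}$, which is finitely generated projective in $\PMod(\widehat\Z\llbracket\widehat G\rrbracket)$.

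For exactness, the key preliminary is that for every finite $\widehat G$-module $A$ and every $i$, the natural map
\[
\widehat F_i \widehat\otimes_{\widehat\Z\llbracket\widehat G\rrbracket} A \;\longrightarrow\; F_i \otimes_{\Z[G]} A
\]
is an isomorphism: this holds because $F_i$ is finitely generated projective and $A$, being finite and discrete, has a $\widehat\Z\llbracket\widehat G\rrbracket$-action already determined by the $\Z[G]$-action via the dense image $G \to \widehat G$. Consequently
\[
H_k(\widehat F_\bullet \widehat\otimes A) \;\cong\; H_k(F_\bullet \otimes_{\Z[G]} A) \;=\; H_k(G;A),
\]
and by goodness (applying \Cref{goodhomology}, which is available since $\FP$ implies $\FP_\infty$) this equals $\mathbf{H}_k(\widehat G;A)$.

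To upgrade this to actual exactness, I would choose any projective resolution $P_\bullet \to \widehat\Z$ in $\PMod(\widehat\Z\llbracket\widehat G\rrbracket)$ (available because the category has enough projectives) and use the projectivity of each $\widehat F_i$ to build a comparison chain map $\Phi\colon \widehat F_\bullet \to P_\bullet$ over $\id_{\widehat\Z}$. The previous computation shows that $\Phi \widehat\otimes A$ is a quasi-isomorphism for every finite $A$. Exploiting Pontryagin duality, which is an exact contravariant equivalence between $\PMod$ and $\DMod$ under which every discrete torsion module is a filtered colimit of finite submodules, together with the finite length of $\widehat F_\bullet$ (guaranteed by $G$ being of type $\FP$), one can promote this to the statement that $\Phi$ itself is a quasi-isomorphism. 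Since $P_\bullet$ is exact, so is $\widehat F_\bullet$. The main obstacle I expect is precisely this last upgrading step: passing from information at all finite coefficient systems back to information about the complex of profinite modules itself. The finite-type assumption is essential here, since it keeps each homology group of $\widehat F_\bullet$ profinite and hence fully detected by its finite quotients, which is what makes the Mittag-Leffler and Pontryagin-duality arguments applicable.
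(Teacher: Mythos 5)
First, note the paper does not prove this statement at all: it is quoted verbatim from Jaikin--Zapirain \cite[Proposition~3.1]{JaikinZapirain2020}, so there is no in-paper argument to compare with. Your outline is, in substance, the standard proof of that result: base change makes each $\widehat{F_i}$ finitely generated projective, the coefficient comparison $F_i\otimes_{\Z[G]}A\cong \widehat F_i\,\widehat\otimes\,A$ for finite $A$ reduces everything to goodness, and exactness is recovered from finite coefficients by a duality argument. Two points, however, are not yet proofs. The first is your assertion that ``the previous computation shows that $\Phi\,\widehat\otimes\,A$ is a quasi-isomorphism'': what the computation shows is that the two homology groups are \emph{abstractly} isomorphic, which does not make a given chain map an isomorphism on homology. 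You must identify $H_k(\Phi\,\widehat\otimes\,A)$ with the canonical comparison map $\iota_k$ of \Cref{goodhomology}. This is easy but has to be said: the composite $F_\bullet\to\widehat F_\bullet\xrightarrow{\Phi}P_\bullet$ is a $\Z[G]$-chain map between projective complexes lifting $\Z\to\widehat\Z$ over $\iota\colon G\to\widehat G$, hence induces exactly $\iota_k$ after tensoring with $A$, and the first arrow becomes your coefficient isomorphism; only then does goodness give that $\Phi\,\widehat\otimes\,A$ is a quasi-isomorphism.

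The second point is the ``upgrading'' step, which you explicitly leave as the expected obstacle. The tools you name (exactness of Pontryagin duality, discrete torsion modules as filtered colimits of finite submodules) are the right ones, but Mittag--Leffler plays no role and the finite length of $\widehat F_\bullet$ is not what is needed (bounded below suffices; finite type is used earlier, for projectivity and the coefficient comparison). Concretely: let $C_\bullet$ be the mapping cone of $\Phi$, a bounded-below complex of projective profinite modules. For every finite module $A$ the complex $C_\bullet\,\widehat\otimes\,A$ is acyclic, and by the adjunction $\Hom_{\Z}(C_\bullet\,\widehat\otimes_{R}A,\QQ/\Z)\cong\bHom_{R}(C_\bullet,A^{\vee})$ (with $R=\widehat\Z\llbracket\widehat G\rrbracket$ and $A^\vee$ again finite) this says $\bHom_R(C_\bullet,B)$ is acyclic for every finite $R$-module $B$. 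Since any continuous map from a profinite module to a discrete torsion module has finite image, the Pontryagin dual of $C_\bullet$ is $\bHom_R(C_\bullet,R^\vee)\cong\varinjlim_B \bHom_R(C_\bullet,B)$, the colimit running over the finite submodules $B\subseteq R^\vee$; filtered colimits are exact, so the dual of $C_\bullet$ is acyclic, and exactness and faithfulness of Pontryagin duality give that $C_\bullet$ is acyclic, i.e.\ $\Phi$ is a quasi-isomorphism and $\widehat F_\bullet\to\widehat\Z$ is exact. With those two repairs your argument is complete and agrees with the proof of the cited proposition.
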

%This proposition allows us to discuss $\mathbf{H}^*(G, \widehat \Z)$ for a good group $G$ of type $\FP$; in general continuous cohomology of profinite groups is only defined with discrete torsion coefficients.

There is also the corresponding notion of goodness of a pair.

\begin{defn}
    A discrete group pair $(G,\mathcal{H})$ is {\em cohomologically good}, or just good, if, for any $A\in \FMod(\Z [G])=\FMod(\widehat{\Z} \llbracket \widehat{G} \rrbracket )$ and every $k\geq0$, the map  $\iota^k: \mathbf{H}^k( \widehat{G},\overline{\mathcal{H}};A)\to H^k(G,\mathcal{H};A)$ is an isomorphism.
\end{defn}
As in the absolute case, there are also many examples of good pairs.
\begin{prop} \cite[Proposition 5.15]{Xu2025}
    Let $(G,\mathcal{H}=\{H_i\}_{i=1}^{n})$ be a discrete group pair. Suppose that $G$ is good, each $H_i$ is good, and the profinite topology on $G$ induces the full profinite topology on each $H_i$. Then the discrete group pair $(G,\mathcal{H})$ is good.
\end{prop}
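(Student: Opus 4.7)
The plan is to prove goodness of the pair by applying the five lemma to the vertical maps between the two long exact sequences produced by the preceding proposition.

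First, I would fix a coefficient module $A \in \FMod(\Z[G]) = \FMod(\widehat{\Z}\llbracket \widehat{G}\rrbracket)$ and write down the commutative ladder from the preceding proposition:
\begin{equation*}
\begin{tikzcd}[scale cd=0.78]
\cdots \arrow[r] & {\mathbf{H}^{k}(\widehat{G},\overline{\mathcal{H}};A)} \arrow[r] \arrow[d, "\iota^{k}"] & \mathbf{H}^k(\widehat{G};A) \arrow[r] \arrow[d] & \oplus_i \mathbf{H}^k(\overline{H_i};A) \arrow[r] \arrow[d] & {\mathbf{H}^{k+1}(\widehat{G},\overline{\mathcal{H}};A)} \arrow[d, "\iota^{k+1}"] \arrow[r] & \cdots \\
\cdots \arrow[r] & {H^{k}(G,\mathcal{H};A)} \arrow[r] & H^k(G;A) \arrow[r] & \oplus_i H^k(H_i;A) \arrow[r] & {H^{k+1}(G,\mathcal{H};A)} \arrow[r] & \cdots
\end{tikzcd}
\end{equation*}
where the rows are the long exact sequences of the pair and the maps are induced by $\iota \colon G \to \widehat{G}$.

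Second, I would verify that the four non-relative vertical maps appearing in any five-term segment are isomorphisms. The maps $\mathbf{H}^k(\widehat{G};A) \to H^k(G;A)$ are isomorphisms in all degrees by the assumed goodness of $G$. For the subgroup terms, the hypothesis that the profinite topology on $G$ induces the full profinite topology on $H_i$ means precisely that the closure $\overline{H_i}$ of $H_i$ inside $\widehat{G}$ agrees, as a topological group, with the full profinite completion $\widehat{H_i}$; hence $\mathbf{H}^k(\overline{H_i};A) \cong \mathbf{H}^k(\widehat{H_i};A)$, and by goodness of $H_i$ the latter is naturally isomorphic to $H^k(H_i;A)$. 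Taking a finite direct sum preserves this isomorphism.

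Third, since this holds in every degree, I would apply the five lemma to the segment of the ladder with $\mathbf{H}^{k-1}(\widehat{G};A)$, $\oplus \mathbf{H}^{k-1}(\overline{H_i};A)$, $\mathbf{H}^{k}(\widehat{G};A)$, $\oplus \mathbf{H}^{k}(\overline{H_i};A)$ on either side of the relative term, concluding that $\iota^{k} \colon \mathbf{H}^{k}(\widehat{G},\overline{\mathcal{H}};A) \to H^{k}(G,\mathcal{H};A)$ is an isomorphism. The case $k = 0$ (where one end of the ladder is missing) is handled by the four-term variant of the five lemma together with the fact that the augmentation description of $\mathbf{\Delta}_{\widehat{G},\mathcal{S}}$ matches the classical one on the discrete side.

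I do not expect any serious obstacle: the content of the argument is entirely packaged into the preceding proposition (the existence of the commutative ladder of long exact sequences), so the only real input is the identification $\overline{H_i} = \widehat{H_i}$, which is exactly what the induced-profinite-topology hypothesis supplies. The main care needed is to confirm that the $\widehat{\Z}\llbracket \widehat{G}\rrbracket$-module structure on $A$ restricts correctly along $\overline{H_i} \hookrightarrow \widehat{G}$ and along this identification, so that the cohomology comparison is a genuine isomorphism of groups and not merely an abstract bijection.
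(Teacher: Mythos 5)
Your argument is correct and is exactly the expected one: the paper does not reprove this statement but cites \cite[Proposition 5.15]{Xu2025}, whose proof is precisely this five-lemma argument applied to the commutative ladder of long exact sequences from the preceding proposition, using goodness of $G$ for the absolute terms and the identification $\overline{H_i}\cong\widehat{H_i}$ (supplied by the induced full profinite topology hypothesis) together with goodness of each $H_i$ for the subgroup terms. Your attention to the low-degree end of the ladder and to the restriction of the coefficient module along $\overline{H_i}\hookrightarrow\widehat{G}$ covers the only genuine points of care.
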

\subsubsection{Duality}
We recall the notion of Poincar\'e duality groups, which are groups whose cohomology satisfies the same duality conditions as the cohomology rings of closed manifolds.
\begin{defn}
    A discrete group $G$ is said to be a \emph{Poincar\'e duality group of dimension $n$} over a ring $R$, or a $\mathsf{PD}^n(R)$-group, if it satisfies the following:
    \begin{enumerate}
    \item $\cd_R(G)=n$;
    \item $G$ is of type $\mathsf {FP}_{\infty}(R)$;
    \item $H^k(G; RG)=R$ if $k=n$ and is $0$ otherwise.
    \end{enumerate}
    We say $G$ is \emph{orientable} if the $G$-action on $H^n(G; RG)$ is trivial, and non-orientable otherwise.  We call $H^n(G;RG)$ the dualising module for $G$ over $R$.  
\end{defn}
For a group of type $\mathsf {FP}$ the third condition is equivalent to the existence of a \emph{fundamental class} $[G] \in H_n(G; RG)$ such that cap product with $[G]$ induces natural isomorphisms $H_k(G, M) \cong H^{n-k}(G, H^n(G; RG) \otimes M)$ for all $RG$-modules $M$ \cite[Chapter VIII, Theorem 10,1]{Brown1982}.

Although we will not need any of them, we remark for completeness that there are multiple notions of profinite Poincar\'e duality groups. See for instance \cite{Neukirch2008} and \cite{Symonds2000}.
%\Andrew{do we even need the following definition?}
%\begin{defn}
%    Let $p$ be a prime. A profinite group $G$ is said to be a \emph{profinite Poincar\'e duality group of dimension $n$ at $p$} if it satisfies the following:
%        \begin{enumerate}
%    \item $\cd_p(G)=n$;

 %       \item \[\mathbf{H}^k(G; \Z_p[[G]])= {\begin{cases}\Z_p&k=n\\0&{\text{otherwise,}}\end{cases}};\]
        %\item there exists a \emph{fundamental class} $[G] \in \mathbf{H}_n(G, \Z_p[[G]]G)$ such that cap product with $[G]$ induces natural isomorphisms $\mathbf{H}_k(G, M) \cong \mathbf{H}^{n-k}(G, \mathbf{H}^n(G; \Z_p[[G]]) \otimes M)$ for all $\Z_p[[G]]$-modules $M$.
 %   \end{enumerate}
 %   $G$ is said to be \emph{orientable} if the $G$-action on $\mathbf{H}^n(G; \Z_p[[G]])$, which is also called the dualising module, is trivial, and non-orientable otherwise.
%\end{defn}
%We warn the reader that there are multiple definitions of a profinite Poincar\'e duality group. The one given above is due to Tate \cite{Neukirch2008}; the definition in \cite{Symonds2000} requires additionally that $G$ is of type $FP_{\infty}(\Z_p)$.
%\sam{We need a way to relate the cap products of $G$ and $\widehat G$}\Andrew{in his paper Relative cohomology theory for profinite groups Gareth claims that}The following is contained in \cite{Pletch80a} and \cite{Pletch80b}.
\begin{prop} \label{profinite_completion_of_pdn}
    Let $p$ be a prime and let $G$ be a good $\mathsf{PD}^n(\FF_p)$-group.  The following conclusions hold:
    \begin{enumerate}
        %\item \label{profinitepdn}$\widehat G$ is a $p$-$\PD^n$-group.
        \item \label{algebraiso} The inclusion $\iota\colon G\to \widehat G$ induces an $\FF_p$-algebra isomorphism \[\iota^\ast\colon \mathbf{H}^\ast(\widehat G;\FF_p)\to H^\ast(G;\FF_p).\]
        \item \label{fundamentalclass}The inclusion $\iota\colon G\to \widehat G$ induces isomorphisms \[\iota_n\colon H_n(G;\FF_p) \to \mathbf{H}_n(\widehat G;\FF_p).\]  Moreover, the fundamental class $[G]\in H_n(G;\FF_2)$ of $G$ is identified with the fundamental class $[\widehat G]\in \mathbf{H}_n(\widehat G;\FF_2)$. 
        \item \label{capproducts}For all $0 \leq k \leq n$, there is a commutative diagram of cap products
        \[\begin{tikzcd}
            H^{n-k}(G;\FF_2) \arrow[r, "{\langle [G],-\rangle}"]                                        & H_k(G;\FF_2) \arrow[d, "\iota_0"] \\
            \mathbf{H}^{n-k}(\widehat G;\FF_2) \arrow[u, "\iota^n"] \arrow[r, "{\langle [\widehat G],-\rangle}"] & \mathbf{H}_k(\widehat G;\FF_2)          
        \end{tikzcd}\]
        where the vertical arrows are isomorphisms.
    \end{enumerate}
\end{prop}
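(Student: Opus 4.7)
The plan is to derive all three conclusions from the hypothesis of goodness together with the Poincar\'e duality structure on $G$, reducing each to a standard naturality statement for (co)homology. For part (1), the definition of goodness applied with coefficient module $\FF_p$ immediately yields that $\iota^\ast$ is an isomorphism of graded $\FF_p$-modules. The observation noted in \Cref{sec:prelims:profinite} (cf.\ \cite{KW16}) that cup products in discrete and continuous group cohomology are defined by identical formulae shows that $\iota^\ast$ intertwines the two cup products, giving the claimed $\FF_p$-algebra isomorphism.

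For part (2), I would first note that any $\PD^n(\FF_p)$-group is of type $\FP_\infty$ by definition, so \Cref{goodhomology} applied with $M=\FF_p$ yields that $\iota_n\colon H_n(G;\FF_p)\to \mathbf{H}_n(\widehat G;\FF_p)$ is an isomorphism. Poincar\'e duality ensures $H_n(G;\FF_p)\cong H^0(G;\FF_p)\cong \FF_p$, so the fundamental class $[G]$ generates $H_n(G;\FF_p)$, and one may simply \emph{define} $[\widehat G]:=\iota_n([G])$ as the fundamental class of $\widehat G$ (matching the statement of part~(3)).

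Part (3) is then commutativity of a cap product square, which I would establish as naturality of the cap product with respect to the continuous homomorphism $\iota\colon G\to\widehat G$. The projection formula
\[ \iota_\ast\bigl(\iota^\ast(\alpha)\cap [G]\bigr) \;=\; \alpha \cap \iota_\ast([G]) \;=\; \alpha\cap [\widehat G] \]
for $\alpha\in \mathbf{H}^{n-k}(\widehat G;\FF_2)$ gives commutativity, and the vertical arrows are isomorphisms by parts (1) and (2). The bottom cap product is then an isomorphism because the top one is, by Poincar\'e duality for $G$.

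The main obstacle is justifying the projection formula in the profinite setting, where the cap product must be built from the continuous tensor product and continuous Hom functors rather than their discrete counterparts. To handle this I would invoke \Cref{completingresolution}: a finite type projective resolution $F_\bullet\to\Z$ over $\Z[G]$ profinitely completes to a resolution $\widehat F_\bullet\to\widehat\Z$ over $\widehat\Z\llbracket \widehat G\rrbracket$, compatibly with $\iota$. Equipping $F_\bullet$ with a $G$-equivariant diagonal approximation $\Delta\colon F_\bullet\to F_\bullet\otimes_{\Z G} F_\bullet$ and completing yields a continuous, $\widehat G$-equivariant diagonal on $\widehat F_\bullet$; the chain-level cap product formulae in profinite (co)homology are then manifestly natural under the comparison map induced by $\iota$, which gives the projection formula and hence the commutative diagram.
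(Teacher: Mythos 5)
Your proposal is correct and takes essentially the same approach as the paper: part (1) from goodness plus the agreement of cup products, part (2) from \Cref{goodhomology} with the fundamental class pinned down via $H_n(G;\FF_2)\cong\FF_2\cong\mathbf{H}_n(\widehat G;\FF_2)$, and part (3) from naturality of the cap product under $\iota$. The only cosmetic difference is that the paper justifies the compatibility of cup/cap products by citing the explicit profinite formulae (Pletch) instead of your completed-resolution/diagonal-approximation sketch.
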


\begin{proof}
%\cref{profinitepdn}: In \cite[Section 2]{Pletch80b} the result is shown assuming that $G$ is a good duality group, but for a specific prime $p$ the result that $\widehat G$ is a $p$-$\PD^n$-group only requires that $G$ be a good $\mathsf{PD}^n(\FF_p)$-group.
% \\
    \cref{algebraiso},     \cref{capproducts}: This is a consequence of the discussion in the section on goodness. The formulae exhibited in \cite[Section 2] {Pletch80a} for the cup and cap products in cohomology of a profinite group agree with those in the discrete case.
    
    \cref{fundamentalclass}: The isomorphism follows from \cref{goodhomology}. To see that the fundamental classes are identified, note that
    \[H_n(G;\FF_2) \cong \FF_2 \cong \mathbf{H}_n(\widehat G;\FF_2)\]
    and the fundamental classes must be the unique non-zero class.
    % \\
\end{proof}
%%%%%%%%%%%%%%%%%%%%%%%%%%%%%%%%%%%%%%%%%%%%%%%%%%%%%%%%%%

%\subsubsection{Relative duality}

\subsection{Quadratic forms}\label{sec:prelims:quadratic}
Let $G$ be a Poincar\'e duality group of dimension $n=4m$ over a commutative PID $R$. Poincar\'e duality implies that the cup product pairing on the torsion-free part of $H^{2m}( G, R)$ is symmetric, and defines a quadratic form on this $R$-module. We will consider to what extent this can be detected in the profinite completion. This can be considered as a question of local-to-global lifting, i.e. whether two quadratic forms defined $\Z$ that are equivalent over all completions of $\Z$ are in fact equivalent over $\QQ$. An answer is provided by the celebrated Hasse-Minkowski theorem. We will use a slightly modified version, which is exposed in \cite[Chapter 15, Section 5]{Conway1999}.

Given a rational or $p$-adic integer $n$, it can be written uniquely in the form $n=p^a \cdot b$ where $\gcd(p,b)=1$. $p^a$ is called the \emph{$p$-part} of $n$ and $b$ is called the \emph{$p'$-part} of $n$. 
For $p=2$, denote by $u_i$ any $2$-adic unit congruent to $i \mod 8$. A \emph{$2$-adic antisquare} is a number of the form $2^{2k+1} \cdot u_{\pm 3}$, $k \in \Z$.
For $p>2$, denote by $u_+$ (resp. $u_-$) any $p$-adic unit which is (resp. is not) a quadratic residue $ \mod p$. A \emph{$p$-adic antisquare} is a number of the form $p^{2k+1} \cdot u_-$, $k \in \Z$.
Given an integral quadratic form $f= \mathrm{diag} \{p^{a_1} \cdot b_1, p^{a_2} \cdot b_2, \dots \}$, where $m$ of these numbers are $p$-adic antisquares, its \emph{$p$-signature} is defined to be the residue class modulo 8 of
\begin{equation}
    \begin{cases}
        4m+ \sum_i{p^{a_i}} &p \neq 2; \\
        4m+ \sum_ib_i.
    \end{cases}
\end{equation}
The $2$-signature is also called the \emph{oddity} of $f$. We further define the \emph{$p$-excess} to be
\begin{equation*}
        \begin{cases}
        (p\text{-signature}) - \text{dimension} &p \neq 2; \\
        \text{dimension}  - (p\text{-signature}) & p=2.
    \end{cases}
\end{equation*}
These are all invariants of the quadratic form. The analogue of the Hilbert reciprocity law in this context is 
\begin{equation} \label{sum_formula}
    \text{signature}(f)+ \sum_{p \geq 3} p\text{-excess}\equiv \text{oddity}(f) \mod 8.
\end{equation}
The version of the Hasse-Minkowski principle we will use is:
\begin{thm} \label{Hasse} \cite[Chapter 15, Section 5, Theorem 3]{Conway1999}
    Two nonsingular integral quadratic forms of the same dimension are equivalent over $\QQ$ if and only if both of the following hold:

    \begin{enumerate}
        \item the quotient of their determinants is a rational square;
        \item they have the same signature, the same oddity, and, for all $p \geq 3$, the same $p$-excesses modulo 8
    \end{enumerate}
\end{thm}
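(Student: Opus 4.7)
The plan is to recover this from two classical ingredients: the Hasse--Minkowski local-global principle for quadratic forms over $\QQ$, and the standard classification of nonsingular quadratic forms over each local field $\RR$ and $\QQ_p$. What the Conway--Sloane statement does is repackage these local invariants in a form that is easy to read off from a diagonal matrix for $f$.

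The "only if" direction is immediate: if $f_1\cong f_2$ over $\QQ$ then the induced isomorphisms over every completion preserve dimension, signature, square class of the determinant, and all local Hasse-type invariants, hence also the oddity and the $p$-excesses. For the "if" direction I would first invoke the classical Hasse--Minkowski theorem, which asserts that two nonsingular rational quadratic forms of the same dimension are $\QQ$-equivalent if and only if they are equivalent over $\RR$ and over every $\QQ_p$. It therefore suffices to show that the invariants listed in (1) and (2) detect the local equivalence class at every place. Over $\RR$ this is Sylvester's law of inertia, encoded by dimension and signature. Over $\QQ_p$ a nonsingular form is determined by dimension, square class of determinant, and its Hasse symbol; a direct computation with the diagonal normal form used to define the $p$-excess (for $p$ odd) and the oddity (for $p=2$) shows that, combined with the square class of the determinant, they encode exactly the Hasse symbol. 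The global constraint \eqref{sum_formula} is then extracted from Hilbert's product formula $\prod_v (a,b)_v = 1$ for the Hilbert symbol, applied to the discriminant together with the Hasse symbols.

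The main obstacle lies inside the Hasse--Minkowski theorem itself, whose proof proceeds by induction on the dimension: dimensions $1$ and $2$ are direct, dimension $3$ uses Hilbert reciprocity (which is of the same depth as quadratic reciprocity), dimension $4$ splits into cases depending on whether $f$ is isotropic at every completion, and dimensions $\geq 5$ invoke Meyer's theorem that an indefinite rational form in at least five variables is isotropic. A secondary but non-trivial obstacle is the dyadic prime $p=2$: the definitions of $2$-adic antisquare and oddity are designed precisely to accommodate the awkward behaviour of the Hasse symbol over $\QQ_2$, so the identification of these invariants with the Hasse symbol at $2$ requires checking several congruence cases separately. Granted these inputs, the rest of the argument is a bookkeeping translation between the Hasse-symbol presentation of a local equivalence class and the diagonal-entry presentation that underlies the $p$-excess.
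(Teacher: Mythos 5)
The paper does not prove this statement at all: it is imported verbatim from Conway--Sloane \cite[Chapter 15, Section 5, Theorem 3]{Conway1999}, so there is no internal proof to compare against. Your sketch is the standard derivation and essentially the one in the cited source -- reduce to the Hasse--Minkowski local-global principle and check that dimension, signature, the square class of the determinant (condition (1) localises correctly because a rational number that is a square in $\RR$ and every $\QQ_p$ is a rational square), the oddity, and the $p$-excesses form a complete system of local invariants -- so it is correct modulo the dyadic bookkeeping you already flag.
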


%%%%%%%%%%%%%%%%%%%%%%%%%%%%%%%%%%%%%%%%%%%%%%%%%%%%%%%%%%%%%%%%%
\section{Profinite Steenrod squares}\label{sec:Steenrod}
In this section we recall the formulation of Steenrod squares over general module categories from \cite{Epstein}.
%\subsection{Epstein's formulation of Steenrod squares}
%\sam{I think it makes a lot of sense to move this later now}
Let $\calm$ be the category of modules over a ring $R$ of characteristic $p$.  Suppose $R$ has a commutative associative diagonal $\Delta\colon R\to R\otimes R$ and $\calm$ has a tensor product.  Given $R$-modules $A,B\in \calm$ we consider their tensor product as abelian groups $A\otimes_\Z B$ as an $R$-module via the diagonal $\Delta$.

Let $\Ab$ denote the category of abelian groups equipped with its usual tensor product $-\otimes_\Z-$.  There are adjoint functors
\[\mathrm{frgt}\colon \calm \to \Ab \quad\textrm{and}\quad \mathrm{indt}\colon \Ab \to \calm,\]
where $\mathrm{frgt}$ is the forgetful functor, and for $A\in\Ab$ we have $\mathrm{indt}(A)=R \otimes_\Z A$.

\begin{thm}[Epstein]
    If $A$ is a commutative associative algebra in $\calm$ and $C$ is a commutative associative coalgebra in $\calm$, then $\Ext_R^\ast(C,A)$ admits reduced power operations satisfying the following properties:
    \begin{enumerate}
        \item[$p=2$]
        \begin{enumerate}
            \item $\Sq^i\colon \Ext^n(C,A)\to \Ext^{n+i}(C,A)$ is a homomorphism;
            \item $\Sq^n\colon\Ext^n(C,A)\to  \Ext^{2n}(C,A)$ is given by $x\mapsto x^2$;
            \item if $n>i$, then $\Sq^i\colon\Ext^n(C,A)\to \Ext^{n+i}(C,A)$ is the zero map;
            \item (Cartan formulae) $\Sq^n(xy)=\sum_{i+j=n}\Sq^i(x)\Sq^j(y)$;
            \item (Adem's relations) suppose $\Sq^i=0$ for $i<0$ in $\Ext_R^\ast(C,A)$, if $a<2b$ then
            \[\Sq^a\Sq^b=\sum_{j}\binom{b-1-j}{a-2j}\Sq^{a+b-j}\Sq^j\]
            \item if $i<0$, then $\Sq^i=0$.
        \end{enumerate}
        \item when $p>2$ there exist power operations $P^i$ and $Q^i$ satisfying
        \begin{enumerate}[resume]
            \item \begin{align*}
                &P^i\colon \Ext^n(C,A)\to \Ext^{n+2i(p-1)}(C,A) \text{ and} \\
                &Q^i\colon \Ext^n(C,A)\to \Ext^{n+2i(p-1)+1}(C,A)
            \end{align*}
            are homomorphisms
            \item $P^n \colon\Ext^{2n}(C,A)\to  \Ext^{2np}(C,a)$ is given by $x\mapsto x^p$
            \item if $n>2i$, then $P^i$ is the zero map, and if $n \geq 2i$ then $Q^i$ is the zero map;
            \item (Cartan formulae) \begin{align*}
                &P^n(xy)=\sum_{i+j=n}P^i(x)P^j(y); \\
                &Q^n(xy)=\sum_{i+j=n}Q^i(x)P^j(y) + (-1)^{\rm{dim}(x)}\sum_{i+j=n}P^i(x)Q^j(y)
            \end{align*} 
            \item if $i<0$, then $P^i,Q^i=0$.
            \item \begin{enumerate}
    \item If \( a < pb \) then  
    \[P^a P^b = \sum_t (-1)^{a+t} \binom{(b-1)(b-t)-1}{a-pt} P^{a+b-t} P^t. \]

    \item If \( a \leq pb \) then  
    \[
    Q^a P^b = \sum_t (-1)^{a+t} \binom{(p-1)(b-t)-1}{a-pt} Q^{a+b-t} P^t.
    \]

    \item If \( a \leq pb \) then  
    \[
    Q^a Q^b = \sum_t (-1)^{a+t+1} \binom{(p-1)(b-t)-1}{a-pt-1} Q^{a+b-t} Q^t.
    \]

    \item If \( a < pb \) then  
    \[
    Q^a P^b = \sum_t (-1)^{a+t} \binom{(p-1)(b-t)-1}{a-pt} Q^{a+b-t} P^t.
    \]
\end{enumerate}
        \end{enumerate}
    \end{enumerate}
\end{thm}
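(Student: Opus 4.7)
The plan is to follow the classical Steenrod--Epstein construction, adapted to the abstract module category $\calm$. The hypotheses that $R$ has characteristic $p$ and that $\calm$ admits a tensor product compatible with the diagonal on $R$ give us exactly the formal structure needed to form $p$-fold tensor products of projective resolutions and to let the symmetric group $\Sigma_p$ act on them by permutation.

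First I would choose a projective resolution $F_\bullet \to C$ in $\calm$ together with a $\FF_p[\Sigma_p]$-free resolution $W_\bullet \to \FF_p$ of the trivial module. The key input is an equivariant chain map
\[\Phi\colon W_\bullet \otimes F_\bullet \to F_\bullet^{\otimes p}\]
lifting the iterated comultiplication $C \to C^{\otimes p}$ coming from the cocommutative coassociative coalgebra structure on $C$; such a $\Phi$ exists and is unique up to equivariant chain homotopy by equivariant acyclic models, using that $W_\bullet$ is $\Sigma_p$-free and $F_\bullet^{\otimes p}$ is acyclic in positive degrees. Dually, the commutative associative algebra structure on $A$ yields an iterated multiplication $\mu\colon A^{\otimes p} \to A$.

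Given $x\in \Ext^n(C,A)$ represented by a cocycle $f\colon F_n \to A$, the composite $\mu\circ f^{\otimes p}\colon F_\bullet^{\otimes p} \to A[np]$ is $\Sigma_p$-equivariant, and precomposing with $\Phi$ yields an equivariant map $W_\bullet \otimes F_\bullet \to A[np]$. Passing to $\Sigma_p$-coinvariants and reading off the contribution indexed by a chosen generator of $W_i$ produces the operation $\Sq^i(x)$ at $p=2$ (and $P^i(x)$, $Q^i(x)$ at odd primes) as a class in the appropriate $\Ext$ group. Most of the listed axioms then fall out almost formally: additivity uses that the binomial coefficients $\binom{p}{k}$ for $0<k<p$ vanish mod $p$; the top-cell formulae $\Sq^n(x)=x^2$ and $P^n(x)=x^p$ come from the $W_0$ piece; vanishing in the unstable range reflects the degree constraints on $W_\bullet$; and the Cartan formulae follow from compatibility of $\Phi$ with the product on $A$ via the equivariant K\"unneth isomorphism.

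The step I expect to be the main obstacle is the Adem relations. These encode quadratic relations among compositions of operations and require analysing iterated operations through the wreath-product embedding $\Sigma_p\wr\Sigma_p \hookrightarrow \Sigma_{p^2}$: one expresses a composition $\Sq^a\Sq^b$ (resp.\ $P^aP^b$, $Q^aP^b$, and $Q^aQ^b$) as the pullback of a class in $\mathbf{H}^*(\Sigma_{p^2};\FF_p)$ to the wreath product and then expands that class in two distinct bases to extract the stated binomial coefficient identities. This is precisely the calculation that \cite{Epstein} performs at the level of $\Ext$ in an abstract setting; I would invoke his bookkeeping rather than redo it, since the only category-dependent ingredient was already absorbed into the construction of $\Phi$, and everything downstream is a formal consequence of the cohomology of $\Sigma_{p^2}$ that applies verbatim in $\calm$.
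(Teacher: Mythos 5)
Your outline is correct and follows essentially the same route as the paper, which states this result as Epstein's theorem and defers entirely to \cite{Epstein} rather than proving it; like you, the paper only recalls the construction (external power map, equivariant comparison, extraction of the operations) and leaves the verification of the axioms, in particular the Adem relations, to Epstein. The only cosmetic difference is that the construction recalled in \Cref{sec:Steenrod} uses the cyclic group $\Z/p\Z$ acting by cyclic permutation on $A^{\otimes p}$ rather than the full symmetric group $\Sigma_p$, a standard equivalent variant.
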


%This should apply to the category $\mathfrak C_\pi(\mathbf G)$ of \emph{compact pro $\pi$-modules} for a pro-$\pi$ group $\mathbf G$.  Here $M$ is a compact pro $\pi$-module if it is an inverse limit of finite $\pi$-primary $\mathbf G$-modules.  These are abelian categories with enough projectives but not enough injectives.

We will now recall the definitions of the Steenrod squares and prime power operations in the case of a discrete or profinite group $G$ from \cite{Epstein}.   This is somewhat involved and is only used in this section; we also only recall the parts of op. cit. that we need and so the attentive reader is encouraged to go read Epstein's beautiful paper to see the proofs. 

Recall that, to avoid confusion, we denote by $\Z/p\Z$ the cyclic group of order $p$ and by $\FF_p$ the field of $p$ elements. Let $A$ be an algebra (or a sheaf of algebras) over $\FF_p$, which has a multiplication map \[m: A^{\otimes p} \to A. \] 
Denote by $\Ext_{\Z/p\Z}^*( -, -)$ the functor $\Ext$ taken in the category of (sheaves of) algebras with a $\Z/p\Z$-action. The group $\Z/p\Z$ acts on $A^{\otimes p}$ by cyclically permuting the factors.  By \cite[\S 5.1.2]{Epstein}, this gives rise to a map (not in general a homomorphism)
\[P: \Ext^q_{\ZZ}(\FF_p, A) \to \Ext_{\Z/p\Z}^{pq}(\FF_p, A^{\otimes p}).\]
Furthermore, by \cite[\S4.4.4]{Epstein}, since  $\Z/p\Z$ acts trivially on both $A$ and $\Z/p\Z$, there is an isomorphism
\[\varphi: H^*(\Z/p\Z; \FF_p) \otimes \Ext^*_{\Z}(\FF_p, A) \to \Ext_{\Z/p\Z}^\ast(\FF_p, A).\] 
Let $u\in \Ext^q_{\Z}(\FF_p, A)$.  Following \cite[Definition 6.1]{Epstein}, we define
$D_k: \Ext^q_{\Z}(\FF_p, A)\to \Ext^{pq-k}_{\Z}(\FF_p, A)$ 
by
$$\varphi^{-1} m_*P(u)= \sum_i(v)^{q-i} \otimes D_k(u)$$
where $w_k\in H^*(\Z/p\Z; \FF_p)$ is an element defined in \cite[V5.2]{Steenrod1962}.  By \cite[Corollary~6.7]{Epstein}, the maps $D_k$ are homomorphisms.  We are now ready to define the operations following \cite[\S7.1]{Epstein}.  Let $u\in \Ext^q_{\Z}(\FF_p, A)$, let $m=(p-1)/2$, and let $r=i+m(q^2-q)/2$.
\begin{enumerate}
    \item $p=2$.  If $q\geq i$ we define $\Sq^i (u)=D_{q-i}(u)$.
    \item $p>2$. \begin{enumerate}
        \item If $q\geq 2i$ we define $P^i(u)=(-1)^r(m!)^{-q} D_{(q-2i)(p-1)}(u)$.  If $q<2i$ we define $P^i(u)=0$.
        \item If $q>2i$ we define $Q^i(u)=(-1)^{r+1}(m!)^{-q}D_{(q-2i)(p-1)}(u)$.  If $q\leq 2i$ we define $Q^i(u)=0$.
    \end{enumerate}
\end{enumerate}
% Now, $H^*(\Z/2\Z; \FF_2)$ is a polynomial algebra on a generator $v$, which is graded in degree one. The action of the Steenrod square operations on $H^q(G; A)$ are defined uniquely by the formula
% \begin{equation*}
%   \varphi^{-1} m_*P(u)= \sum_i(v)^{q-i} \otimes \Sq^i(v)
% \end{equation*}
% The composition $\varphi^{-1} m_*P(u)$ a homomorphism \cite[Corollary 6.7]{Epstein}. 

\begin{prop} \label{naturalsq}
    Let $p$ be a prime and let $G$ be a finitely generated group. For any finite algebra $A$ over $\FF_p$, the inclusion $\iota: G \to \widehat{G}$ induces maps on cohomology $\iota^*$ such that, for all $q$ and $i$, the following diagrams commute
    \begin{enumerate}
        \item $(p=2)$
    \begin{equation*}
        \begin{tikzcd}
            \mathbf{H}^{q}(\widehat G; A) \arrow[r, "\Sq^i"]  \arrow[d, "\iota^*"]  & \mathbf{H}^{q+i}(\widehat G; A) \arrow[d, "\iota^*"]  \\
            H^{q}(G; A) \arrow[r, "\Sq^i"]     & H^{q+i}( G; A) 
        \end{tikzcd}
    \end{equation*}
    
        \item $(p>2)$
        \begin{enumerate}
        \item 
        \begin{equation*}
        \begin{tikzcd}
            \mathbf{H}^{q}(\widehat G; A) \arrow[r, "P^i"]  \arrow[d, "\iota^*"]  & \mathbf{H}^{q+2i(p-1)}(\widehat G; A) \arrow[d, "\iota^*"]  \\
            H^{q}(G; A) \arrow[r, "P^i"]     & H^{q+2i(p-1)}( G; A) 
        \end{tikzcd}
    \end{equation*}
    
    \item 
    \begin{equation*}
        \begin{tikzcd}
            \mathbf{H}^{q}(\widehat G; A) \arrow[r, "Q^i"]  \arrow[d, "\iota^*"]  & \mathbf{H}^{q+2i(p-1)+1}(\widehat G; A) \arrow[d, "\iota^*"]  \\
            H^{q}(G; A) \arrow[r, "Q^i"]     & H^{q+2i(p-1)+1}( G; A). 
        \end{tikzcd}
    \end{equation*}
    \end{enumerate}
    \end{enumerate}
\end{prop}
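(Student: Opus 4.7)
The plan is to unpack Epstein's construction, applied separately in $\mathrm{Mod}(\Z G)$ and in $\PMod(\widehat \Z \llbracket \widehat G \rrbracket)$, and to verify that each ingredient commutes with the morphism $\iota^*$ induced by profinite completion. Once enough naturality is extracted from the categorical set-up of \Cref{sec:Steenrod}, the argument will be formal. I will sketch the $p=2$ case; the odd-prime cases are analogous up to the explicit signs and scalars $(-1)^r(m!)^{-q}$ appearing in the definitions of $P^i$ and $Q^i$.

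The first step is to fix compatible resolutions. I would take a finite type projective resolution $F_\bullet$ of $\Z$ over $\Z G$. By \Cref{completingresolution}, its profinite completion $\widehat{F_\bullet}$ is a finite type projective resolution of $\widehat \Z$ over $\widehat \Z \llbracket \widehat G \rrbracket$, and the canonical map $F_\bullet \to \widehat{F_\bullet}$ is a chain map inducing $\iota^*$ on cohomology with coefficients in any finite $\FF_p$-algebra $A$. Taking $p$-fold (completed) tensor products produces $F_\bullet^{\otimes p}$ and $\widehat{F_\bullet}^{\widehat{\otimes} p}$, each carrying the cyclic $\Z/p\Z$-action that permutes the tensor factors, and the canonical map is equivariant; these serve as the $\Z/p\Z$-equivariant resolutions required for Epstein's construction in each category.

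Next, I would check naturality of each building block. Epstein's map $P\colon \Ext^q(\FF_p,A) \to \Ext^{pq}_{\Z/p\Z}(\FF_p, A^{\otimes p})$ arises from the cup $p$-power $u\mapsto u^{\otimes p}$, which is functorial in the underlying resolution, so $P$ commutes with $\iota^*$. The multiplication $m\colon A^{\otimes p}\to A$ is identical on the two sides since it lives purely in $A$. The isomorphism $\varphi$ decomposes the $\Z/p\Z$-equivariant $\Ext$ as $H^*(\Z/p\Z;\FF_p)\otimes \Ext^*(\FF_p,A)$ via a K\"unneth-type argument exploiting the trivial $\Z/p\Z$-action on $A$ and $\FF_p$; since $H^*(\Z/p\Z;\FF_p)$ agrees in the abstract and continuous settings (as $\Z/p\Z$ is finite), $\varphi$ is compatible with $\iota^*$. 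Consequently, the coefficients $D_k(u)$ in the expansion $\varphi^{-1} m_* P(u) = \sum_i v^{q-i}\otimes D_k(u)$ satisfy $\iota^* \circ D_k = D_k \circ \iota^*$. Since $\Sq^i$, $P^i$, and $Q^i$ are (up to fixed combinatorial constants independent of $G$) the maps $D_k$ for specific $k$, the claimed diagrams then commute.

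The main technical obstacle will be verifying that Epstein's categorical framework genuinely applies to $\PMod(\widehat \Z \llbracket \widehat G \rrbracket)$ equipped with the completed tensor product, and that $\widehat{F_\bullet}^{\widehat{\otimes} p}$ is a projective resolution with the requisite $\Z/p\Z$-equivariance. This should reduce to the standard facts that $\PMod$ is an abelian tensor category with enough projectives (\cite[Chapter~6]{Wilkes2024} and \Cref{sec:prelims:profinite}), and that profinite completion sends finite type tensor products of projectives to their completed tensor products, ensuring that all diagonal approximations and cyclic equivariant structures behave as in the discrete case.
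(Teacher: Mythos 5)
Your overall strategy---unpacking Epstein's construction and checking that each ingredient commutes with $\iota^*$---is the same as the paper's, whose proof is essentially the one-sentence assertion that every map entering the definitions of $\Sq^i$, $P^i$, $Q^i$ makes sense in both module categories and commutes with $\iota^*$. However, your implementation has a genuine gap at the first step: you construct the comparison by completing a finite type projective resolution via \Cref{completingresolution}, but that result requires $G$ to be good and of type $\FP$, whereas \Cref{naturalsq} assumes only that $G$ is finitely generated. A finitely generated group need not admit a finite type projective resolution in all degrees, and, more seriously, without goodness the completed complex $\widehat{F_\bullet}$ need not be acyclic, so it is not a resolution and your ``compatible resolutions'' do not exist. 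This matters beyond pedantry: the proposition is meant to hold before goodness is imposed (goodness is invoked separately in the applications to make the vertical maps isomorphisms), so your argument proves only a weaker statement than the one asserted, and in the non-good case the object you compare against would not even compute $\mathbf{H}^*(\widehat G;A)$.

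The repair is to produce $\iota^*$ without completing resolutions. For a finite discrete module $A$ one has $\mathbf{H}^*(\widehat G;A)\cong\varinjlim_N H^*(G/N;A)$, the colimit over finite-index normal subgroups $N\leqslant G$ (equivalently open normal subgroups of $\widehat G$) along inflation, and $\iota^*$ is the colimit of the inflation maps $H^*(G/N;A)\to H^*(G;A)$. Epstein's operations are natural with respect to group homomorphisms, in particular the quotient maps $G\to G/N$ and the maps between finite quotients, so they commute with inflation and hence, after passing to the colimit, with $\iota^*$; alternatively one can compare the discrete bar resolution of $G$ with the continuous bar resolution of $\widehat G$ along $\iota$ at the cochain level. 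Either route uses no finiteness or goodness hypotheses, and your remaining naturality checks (for the $p$-th power map $P$, for $m_*$, for $\varphi$, and hence for the $D_k$ and the normalising constants) then go through exactly as you describe, matching the level of detail of the paper's own proof.
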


\begin{proof}
    All the maps which are used to define the Steenrod squares and the Steenrod prime power operations $P$ and $Q$ implicitly are valid in both the category of $\FF_p[G]$-modules and in the category of ${\FF_p \llbracket \widehat G \rrbracket }$-modules, as with the cup and cap products, and all of them commute with $\iota^*$.
\end{proof}

%%%%%%%%%%%%%%%%%%%%%%%%%%%%%%%%%%%%%%%%%%%%%%%%%%%%%%%%%%
\section{Detection of topological invariants}\label{sec:detect}

We will work in $\FMod$ over the appropriate completed group algebra until otherwise specified.
\subsection{Stiefel--Whitney classes}\label{sec:detect:SW}
In this section we prove profinite invariance of Stiefel--Whitney classes.

\subsubsection{The closed case}

\begin{thmx}\label{thmx:Invariance_of_SW_classes}
    Let $M$ and $N$ be connected aspherical Poincar\'e complexes with good fundamental groups $G$ and $H$, respectively.  If $\Theta\colon \widehat G\to\widehat H$ is an isomorphism, then there exists an $\FF_2$-algebra isomorphism $\theta\colon H^\ast(N;\FF_2)\to H^\ast(M;\FF_2)$ such that $\theta(w_i(N))=w_i(M)$ and $\theta(w(N))=w(M)$.
\end{thmx}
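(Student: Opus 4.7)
The plan is to construct $\theta$ as a composition of $\FF_2$-algebra isomorphisms and then verify it respects Wu classes; the statement about Stiefel--Whitney classes will then follow from the Wu formula, which is the paper's definition of these classes for a Poincar\'e complex. Concretely, asphericity identifies $H^\ast(M;\FF_2)\cong H^\ast(G;\FF_2)$ and $H^\ast(N;\FF_2)\cong H^\ast(H;\FF_2)$ as $\FF_2$-algebras, and goodness of $G$ and $H$ combined with \Cref{profinite_completion_of_pdn} yields $\FF_2$-algebra isomorphisms $\iota_G^\ast\colon \mathbf{H}^\ast(\widehat{G};\FF_2)\to H^\ast(G;\FF_2)$ and $\iota_H^\ast\colon \mathbf{H}^\ast(\widehat{H};\FF_2)\to H^\ast(H;\FF_2)$. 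I would then define $\theta$ as the composite $\iota_G^\ast\circ \Theta^\ast\circ (\iota_H^\ast)^{-1}$, pre- and post-composed with the asphericity identifications.

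The next step is to establish two compatibilities of $\theta$. First, $\theta$ commutes with every Steenrod square $\Sq^i$: for the two completion maps this is \Cref{naturalsq}, while for $\Theta^\ast$ it follows because Epstein's construction is functorial under continuous group homomorphisms of profinite groups. Second, $\theta$ intertwines the fundamental class pairings, namely $\langle \theta(x),[M]\rangle = \langle x,[N]\rangle$ for all $x\in H^n(N;\FF_2)$: this uses that $\iota_G^\ast$ and $\iota_H^\ast$ identify the fundamental classes (\Cref{profinite_completion_of_pdn}) together with the observation that $\Theta_\ast$ necessarily sends the unique nontrivial class in $\mathbf{H}_n(\widehat{H};\FF_2)\cong \FF_2$ to its counterpart in $\mathbf{H}_n(\widehat{G};\FF_2)$.

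With these compatibilities in hand, the Wu classes transfer. Since $v_k(M)$ is characterised uniquely by $\langle v_k(M)\smile y,[M]\rangle = \langle \Sq^k(y),[M]\rangle$ for all $y\in H^{n-k}(M;\FF_2)$, writing $y=\theta(x)$ and combining the three compatibilities of $\theta$ yields $\langle \theta(v_k(N))\smile y,[M]\rangle = \langle \Sq^k(y),[M]\rangle$ for every such $y$; uniqueness then forces $\theta(v_k(N))=v_k(M)$ for every $k$. Applying the Wu formula $w_k = \sum_{i=0}^{k}\Sq^{k-i}(v_i)$ and using one more time that $\theta$ commutes with Steenrod squares gives $\theta(w_k(N))=w_k(M)$ for every $k$, whence $\theta(w(N))=w(M)$. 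The main technical point in the whole argument is the commutation of $\Theta^\ast$ with Steenrod squares; this should not be a serious obstacle because Epstein's operations are built from the diagonal, cyclic permutation, and group cohomology of $\Z/p\Z$, all of which respect continuous group isomorphisms, so the remainder reduces to formal bookkeeping built on the earlier propositions.
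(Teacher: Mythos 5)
Your proposal is correct and follows essentially the same route as the paper: compose the asphericity identifications with the goodness isomorphisms and $\Theta^\ast$, check that the composite preserves Steenrod squares (via \Cref{naturalsq} and naturality of Epstein's operations), and then transfer the Wu classes and apply Wu's formula. The only difference is that you spell out the compatibility with the fundamental-class pairing needed to pin down the Wu classes, a point the paper leaves implicit in the phrase ``the Wu classes are completely determined in terms of the Steenrod squares.''
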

\begin{proof}
    Since $M$ and $N$ are aspherical the classifying maps $M\to BG$ and $N\to BH$ induce $\FF_2$-algebra isomorphisms
    \[H^\ast(G;\FF_2)\cong H^\ast(M;\FF_2)\text{ and }H^\ast(H;\FF_2)\cong H^\ast(N;\FF_2)\]
     which preserve Steenrod squares.
    Now, since $G$ and $H$ are good, the natural inclusions $G\to \widehat G$ and $H\to \widehat H$ induce natural $\FF_2$-algebra isomorphisms
    \[\mathbf{H}^\ast(\widehat G;\FF_2)\cong H^\ast(G;\FF_2)\text{ and }\mathbf{H}^\ast(\widehat H;\FF_2)\cong H^\ast(H;\FF_2),\]
    which by \cref{naturalsq} preserve Steenrod squares.  Combining these isomorphisms we obtain an $\FF_2$-algebra isomorphism $\theta\colon H^\ast(N;\FF_2)\to H^\ast(M;\FF_2)$ that preserves Steenrod squares.  Now, the Wu classes are completely determined in terms of the Steenrod squares, so we have  $\theta(v_k(N))=v_k(M)$ and $\theta(v(N))=v(M)$.  By Wu's Theorem, the Stiefel--Whitney classes are completely determined by the Wu classes (\Cref{WuTheorem}).  Thus, we have $\theta(w_k(N))=w_k(M)$ and $\theta(w(N))=w(M)$ as required.
\end{proof}

\subsubsection{The relative case}
\begin{thm} \label{equal_relative_SWclasses}
    Let $(K_1, L_1)$ and $(K_2, L_2)$ be connected aspherical relative Poincar\'e complexes of dimension $n$ with good fundamental groups $G_1$ and $G_2$. Suppose further that, for $j=1,2$, the inclusion of each connected component $C_{i,j}$ of $L_{j} \hookrightarrow K_j$ is $\pi_1$-injective with image $H_{i,j}$. Let $\mathcal{H}_j$ be the union of all $H_{i,j}$. Suppose that both $(G_1, \calh_1)$ and $(G_2, \calh _2)$ are good pairs. If $\Theta\colon (\widehat G_1, \overline{\mathcal{H}_1}) \to (\widehat G_2, \overline{\mathcal{H}_2})$ is an isomorphism of pairs, then the Stiefel--Whitney classes of $(K_1, L_1)$ and $(K_2, L_2)$ are equal.
\end{thm}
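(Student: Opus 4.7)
My plan is to run the argument of Theorem~\ref{thmx:Invariance_of_SW_classes} in parallel, but at the level of pairs, using the characterisation of the Wu classes of a Poincar\'e pair from Section~\ref{sec:prelims:characteristic}. The proof proceeds in three stages: transport both sides of the picture into relative profinite group cohomology; verify that the induced isomorphism respects the algebraic structure needed to characterise the relative Wu classes; and then apply this characterisation together with the formula $w(K_j) = \Sq(v_{K_j,L_j})$.

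For the first stage, since each $K_j$ is aspherical with fundamental group $G_j$, and the $\pi_1$-injectivity of the inclusion $L_j \hookrightarrow K_j$ together with asphericity realises each component of $L_j$ as a $K(H_{i,j},1)$, the classifying map of pairs $(K_j, L_j) \to (BG_j, B\mathcal{H}_j)$ yields an isomorphism of long exact sequences, hence on relative cohomology with $\FF_2$-coefficients. Goodness of the pair $(G_j, \mathcal{H}_j)$ then upgrades this to an isomorphism with $\mathbf{H}^*(\widehat{G_j}, \overline{\mathcal{H}_j}; \FF_2)$, compatibly with the identification of the absolute cohomology used in Theorem~\ref{thmx:Invariance_of_SW_classes}.

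For the second stage, the isomorphism of pairs $\Theta$ induces a morphism of long exact sequences in relative profinite cohomology, hence an isomorphism $\theta_\mathrm{rel} : \mathbf{H}^*(\widehat{G_2}, \overline{\mathcal{H}_2}; \FF_2) \to \mathbf{H}^*(\widehat{G_1}, \overline{\mathcal{H}_1}; \FF_2)$ compatible with $\Theta^*$ on absolute cohomology. The cup product action of $\mathbf{H}^*(\widehat{G_j}; \FF_2)$ on $\mathbf{H}^*(\widehat{G_j}, \overline{\mathcal{H}_j}; \FF_2)$ is defined by the same chain-level formulae as in the discrete case, is natural under $\Theta^*$, and is preserved by the goodness comparison maps. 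Applying Epstein's construction (Section~\ref{sec:Steenrod}) with the augmentation-kernel coalgebra $\mathbf{\Delta}_{\widehat{G_j}, \overline{\mathcal{H}_j}}$ in place of $\FF_p$ then defines Steenrod squares on relative profinite cohomology; naturality of Epstein's construction furnishes the profinite analogue of Proposition~\ref{naturalsq} in the relative setting and the compatibility of these squares with the discrete relative Steenrod squares under the goodness isomorphism.

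Combining these identifications, I obtain an $\FF_2$-algebra isomorphism $\theta : H^*(K_2; \FF_2) \to H^*(K_1; \FF_2)$ together with a compatible $\theta_\mathrm{rel} : H^*(K_2, L_2; \FF_2) \to H^*(K_1, L_1; \FF_2)$ that intertwines both the cup product action of absolute on relative cohomology and the Steenrod squares on each factor. The defining relation $\Sq^k(x) = (v_{K_j, L_j})_k \cup x$ for all $x \in H^{n-k}(K_j, L_j; \FF_2)$, together with uniqueness of the Wu class coming from the perfect cup product pairing in a Poincar\'e duality pair, then forces $\theta\bigl((v_{K_2, L_2})_k\bigr) = (v_{K_1, L_1})_k$. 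The formula $w(K_j) = \Sq(v_{K_j, L_j})$ now gives $\theta(w_i(K_2)) = w_i(K_1)$ and $\theta(w(K_2)) = w(K_1)$, which I interpret as the assertion of the theorem. The main obstacle is the last step of the second stage: the naturality of the Steenrod squares on relative profinite cohomology under both the goodness isomorphism and $\Theta$ is not formally covered by Proposition~\ref{naturalsq} (which is stated for absolute cohomology), so the bulk of the technical work lies in tracking Epstein's definition through the coalgebra $\mathbf{\Delta}_{\widehat{G}, \overline{\mathcal{H}}}$ and checking naturality at each step.
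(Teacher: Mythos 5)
Your proposal is correct and takes essentially the same route as the paper: the paper's proof of \cref{equal_relative_SWclasses} simply declares itself ``almost identical'' to \cref{thmx:Invariance_of_SW_classes}, producing the isomorphism on relative cohomology from goodness of the pairs and then transporting the relative Wu classes by naturality of the Steenrod squares, exactly as you do. Your write-up is in fact more explicit than the paper's, notably in isolating the one genuinely technical point --- that the relative analogue of \cref{naturalsq}, i.e.\ Steenrod operations on $\bExt^\ast_{\widehat G}(\mathbf{\Delta}_{\widehat G,\overline{\mathcal{H}}},\FF_2)$ via Epstein's framework and their compatibility with the goodness and $\Theta$-induced maps --- which the paper invokes without comment.
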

\begin{proof}
%    Consider the following commutative diagram
    
 %   \begin{equation}
 %       \begin{tikzcd}
  %          \mathbf{H}^{q}(\widehat G_1, \overline{\mathcal{H}_1}; A) \arrow[d, "\iota^*"]  \times & \mathbf{H}^{n-q}(\widehat G_1; A) \arrow[r, "\cup"]  \arrow[d, "\iota^*"]  & \mathbf{H}^{n}(\widehat G_1, \overline{\mathcal{H}_1}; A) \arrow[d, "\iota^*"]  \\
  %          H^{q}(G_1, \mathcal{H}_1; A) \times &  H^{n-q}(G_1, \mathcal{H}_1; A) \arrow[r, "\cup"]     & H^{n}(G_1, \mathcal{H}_1; A) 
  %      \end{tikzcd}
  %  \end{equation}
    The proof is almost identical to that of \cref{thmx:Invariance_of_SW_classes}. The same argument shows that there is an $\FF_2$-algebra isomorphism $\theta\colon H^\ast(G_1, \mathcal{H}_1;\FF_2)\to H^\ast(G_2, \mathcal{H}_2;\FF_2)$. Since the pair $(G_1, \mathcal{H}_1)$ is good, each Wu class $(v_{(\widehat G_1, \overline{\mathcal{H}_1})})_k$ for the pair $(\widehat G_1, \overline{\mathcal{H}_1})$ must get mapped to the Wu class $(v_{(G_1, \mathcal{H}_1)})_k$ for the pair $(G_1, \mathcal{H}_1)$ by naturality of the Steenrod squares. This implies that $\theta ((v_{(G_1, \mathcal{H}_1)})) =(v_{(G_2, \mathcal{H}_2)})$. $\theta$ preserves Steenrod squares, so it preserves Stiefel--Whitney classes as well.
\end{proof}

\subsection{Spin Structures}\label{sec:detect:spin}

\begin{corollary}\label{spin}
    Let $M$ and $N$ be smooth manifolds with good fundamental groups $G$ and $H$, respectively.  If $\Theta\colon \widehat G\to\widehat H$ is an isomorphism, then $M$ admits a spin structure if and only if $N$ admits a spin structure.
\end{corollary}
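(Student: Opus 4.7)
The plan is to derive this corollary as an essentially one-line consequence of the already-established \Cref{thmx:Invariance_of_SW_classes} together with the classical Haefliger criterion for the existence of a spin structure. First I would note that the hypotheses of \Cref{thmx:Invariance_of_SW_classes} are satisfied: $M$ and $N$ are (closed connected aspherical) smooth manifolds with good fundamental groups $G$ and $H$ and $\widehat G\cong\widehat H$. Applying that theorem produces an $\FF_2$-algebra isomorphism $\theta\colon H^\ast(N;\FF_2)\to H^\ast(M;\FF_2)$ with $\theta(w_i(N))=w_i(M)$ for every $i$.

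Next I would invoke the criterion recalled in the remark on obstructions from Stiefel--Whitney classes: a smooth manifold admits a spin structure if and only if its first two Stiefel--Whitney classes vanish \cite{Haefliger1956}. Since $\theta$ is an isomorphism of $\FF_2$-vector spaces in each degree, the equality $\theta(w_i(N))=w_i(M)$ gives $w_i(M)=0$ if and only if $w_i(N)=0$. Specialising to $i=1,2$ yields
\[w_1(M)=w_2(M)=0 \iff w_1(N)=w_2(N)=0,\]
and combining this with Haefliger's criterion delivers the corollary.

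The main point is that there is no real obstacle at this stage: all the content has been packaged into \Cref{thmx:Invariance_of_SW_classes}, whose proof in turn rests on profinite goodness, the naturality of the Steenrod squares under profinite completion (\Cref{naturalsq}), and Wu's formula (\Cref{WuTheorem}). The only mild point to flag is that the statement of the corollary appears to omit the standing hypotheses of closedness, connectedness, and asphericity; I would read these in from the context of \Cref{thmx:A}, under which the appeal to \Cref{thmx:Invariance_of_SW_classes} is immediate.
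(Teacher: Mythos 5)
Your proposal is correct and matches the paper's argument exactly: the paper also reduces to the equivalence $w_1(M)=w_2(M)=0 \iff w_1(N)=w_2(N)=0$ via \Cref{thmx:Invariance_of_SW_classes} and the spin criterion. Your remark about reading in the closed/connected/aspherical hypotheses is also apt, since the paper's proof implicitly uses them when invoking \Cref{thmx:Invariance_of_SW_classes}.
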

\begin{proof}
    It suffices to prove $w_1(M)=w_2(M)=0$ if and only if $w_1(N)=w_2(N)=0$.  But this follows from \Cref{thmx:Invariance_of_SW_classes}.
\end{proof}

Knowledge of the profinite invariance of spin structures can be applied quite effectively to profinite classification problems.  We describe a simple example in the context of flat $6$-manifolds to demonstrate the idea.

\begin{example}
    There are $4$ flat $6$-manifolds in the $\QQ$-class with \texttt{CARAT} identifier \texttt{min.368} with \texttt{CARAT} identifiers \texttt{min.368.1.1.8}, \texttt{min.368.1.1.10}, \texttt{min.368.1.1.15}, and \texttt{min.368.1.4.4}.  Denote these by $M_1,\dots,M_4$ respectively.  These flat manifolds have holonomy $C_3\times D_4$ where $|C_3|=3$ and $|D_4|=8$. By \cite[Proposition~1.2]{CHMV2025}, any finitely generated residually finite group $G$ whose profinite completion is isomorphic to $\widehat{\pi_1(M_i)}$ for some $i$ is the fundamental group of a flat manifold with holonomy in the same $\QQ$-class.  In particular, $G$ is isomorphic to some $\pi_1 (M_i)$ for some $i$.  Exactly one of these four manifolds, namely $M_3$, is spin \cite{LutowskiPutrycz2015} (the data we are using is available at \cite{LutwoskiData}).  Thus, if $\widehat G\cong \widehat{\pi_1M}_{3}$, then $G\cong \pi_1M_3$.
\end{example}

We record the analogous statement in the case with boundary; the proof is exactly the same as in the closed case.

\begin{corollary}
    Let $(M,\partial M)$ and $(N,\partial N)$ be smooth manifolds with boundary. Suppose that the inclusion of the fundamental groups of boundary components gives rise to good group pairs $(G_1,\calh_1)$ and $(G_2,\calh_2)$, respectively. If there exists an isomorphism of pairs $\Theta\colon  (\widehat G_1,\overline\calh_1)\to (\widehat G_2,\overline \calh_2)$, then $M$ admits a spin structure if and only if $N$ admits a spin structure.
\end{corollary}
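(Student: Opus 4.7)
The plan is to reduce the statement to the relative Stiefel--Whitney invariance theorem (\Cref{equal_relative_SWclasses}) in exactly the way that \Cref{spin} reduces to \Cref{thmx:Invariance_of_SW_classes}. The key first ingredient is Haefliger's criterion: a smooth manifold, with or without boundary, admits a spin structure if and only if the first two Stiefel--Whitney classes $w_1$ and $w_2$ of its tangent bundle vanish. This is a purely bundle-theoretic obstruction, living in $H^1 \oplus H^2$ and not requiring the manifold to be closed. Via the identification $w(K) = \Sq(v_{K, L})$ recorded in the preliminaries, the tangent bundle Stiefel--Whitney classes of the smooth manifold $M$ (resp.\ $N$) coincide with the Stiefel--Whitney classes of the Poincar\'e duality pair $(M, \partial M)$ (resp.\ $(N, \partial N)$).

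With this reformulation, it suffices to show $w_1(M) = w_2(M) = 0$ if and only if $w_1(N) = w_2(N) = 0$. But this is immediate from \Cref{equal_relative_SWclasses} applied to the pairs $(M, \partial M)$ and $(N, \partial N)$: under the (implicit) asphericity hypothesis, which is needed here for the same reason as in the closed case, its hypotheses are precisely the goodness of pairs and the existence of the profinite isomorphism $\Theta$ that appear in the present corollary. The theorem then produces, via the profinite cup-product and Steenrod-square naturality developed in \Cref{sec:Steenrod}, an $\FF_2$-algebra isomorphism that carries each $w_i(N, \partial N)$ to $w_i(M, \partial M)$, and in particular detects their vanishing.

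There is no genuine obstacle at this stage; the entire content of the argument has been packaged into \Cref{equal_relative_SWclasses} and into Haefliger's boundary-tolerant characterization of spin structures. If a subtlety arises, it is purely bookkeeping: one must be careful to match the convention $w(K) = \iota^\ast w(M) = \Sq(v_{K, L})$ so that the classes being compared on the two sides genuinely live in the same cohomological home before the isomorphism from \Cref{equal_relative_SWclasses} is applied.
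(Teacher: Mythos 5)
Your proposal is correct and matches the paper's (unstated) argument: the paper simply remarks that ``the proof is exactly the same as in the closed case,'' i.e.\ spin existence is equivalent to $w_1=w_2=0$ (a tangent-bundle condition valid for manifolds with boundary), and this vanishing is transferred by \Cref{equal_relative_SWclasses} using the compatibility $w(K)=\Sq(v_{K,L})$ with the smooth definition. Your remark that asphericity is implicitly needed, just as in \Cref{spin}, is an accurate reading of the hypotheses actually required by \Cref{equal_relative_SWclasses}.
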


\subsection{Stiefel--Whitney numbers}\label{sec:detect:SWnums}

\begin{thm} \label{equalSWnumbers}
    Let $M$ and $N$ be connected aspherical Poincar\'e complexes with good fundamental groups $G$ and $H$.  If $\Theta\colon \widehat G\to\widehat H$, then the Stiefel--Whitney numbers of $M$ and $N$ are equal.
\end{thm}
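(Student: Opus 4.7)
The plan is to apply Theorem~\ref{thmx:Invariance_of_SW_classes} and reduce the claim to a statement about the top degree of $\FF_2$-cohomology. First I would invoke that theorem to obtain an $\FF_2$-algebra isomorphism $\theta\colon H^\ast(N;\FF_2)\to H^\ast(M;\FF_2)$ satisfying $\theta(w_i(N))=w_i(M)$ for all $i$. Inspecting the proof of that theorem, $\theta$ is constructed as a composition of classifying-map pullbacks, goodness isomorphisms $H^\ast(-;\FF_2)\cong \mathbf{H}^\ast(\widehat{-};\FF_2)$, and the isomorphism on continuous cohomology induced by $\Theta$; all of these preserve cohomological degree, so $\theta$ is a \emph{graded} algebra isomorphism. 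In particular, for any tuple $(i_1,\ldots,i_k)$ of positive integers with $\sum_j i_j = n$, we obtain
\[\theta\!\left(\prod_{j=1}^k w_{i_j}(N)\right) = \prod_{j=1}^k w_{i_j}(M)\]
inside the top-degree groups $H^n(-;\FF_2)$.

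Next I would invoke Poincar\'e duality: since $M$ and $N$ are aspherical Poincar\'e complexes of dimension $n$, the groups $H^n(M;\FF_2)$ and $H^n(N;\FF_2)$ are each one-dimensional over $\FF_2$, and the Kronecker pairings $\langle-,[M]\rangle$ and $\langle-,[N]\rangle$ are isomorphisms onto $\FF_2$.  The restriction of $\theta$ to degree $n$ is an isomorphism between two one-dimensional $\FF_2$-vector spaces, and hence it sends the unique non-zero element to the unique non-zero element.  Consequently $\prod_j w_{i_j}(N)$ vanishes in $H^n(N;\FF_2)$ if and only if $\prod_j w_{i_j}(M)$ vanishes in $H^n(M;\FF_2)$, and in either case the corresponding Stiefel--Whitney numbers in $\FF_2$ coincide.

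I do not anticipate any real obstacle here: the theorem reads as a short formal consequence of Theorem~\ref{thmx:Invariance_of_SW_classes} combined with the one-dimensionality of top $\FF_2$-cohomology of a connected Poincar\'e complex.  If one preferred a more conceptual argument, one could instead appeal to Proposition~\ref{profinite_completion_of_pdn}, which identifies the fundamental classes $[G]$ and $[\widehat G]$ under profinite completion and makes the cap-product diagrams commute; this would let one verify directly that $\theta$ intertwines $\langle-,[N]\rangle$ and $\langle-,[M]\rangle$, but it is heavier machinery than the statement requires.
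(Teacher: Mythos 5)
Your argument is correct, and its final step differs from the paper's. The paper deduces the result by passing through the profinite completion: it combines \Cref{thmx:Invariance_of_SW_classes} with \Cref{profinite_completion_of_pdn}, which says that the cup and cap products and the $\FF_2$-fundamental classes of $G$ and $\widehat G$ (resp.\ $H$ and $\widehat H$) are identified, so the Stiefel--Whitney numbers of $M$, of $\widehat G\cong\widehat H$, and of $N$ all coincide. You instead work entirely on the space level: since $\theta$ is graded (as you correctly extract from the construction in the proof of \Cref{thmx:Invariance_of_SW_classes}), it carries $\prod_j w_{i_j}(N)$ to $\prod_j w_{i_j}(M)$ in top degree, and because $H^n(-;\FF_2)$ of a connected Poincar\'e complex is one-dimensional with the Kronecker pairing against the (unique nonzero) $\FF_2$-fundamental class an isomorphism, each Stiefel--Whitney number is simply the indicator of non-vanishing of the corresponding product, which $\theta$ preserves. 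This is a slightly more elementary route that avoids tracking fundamental classes and cap products through the completion, but it exploits a feature special to $\FF_2$: there is no orientation ambiguity since the nonzero top class is unique. The paper's cap-product formulation is the one that generalizes to other coefficients, e.g.\ the mod-$3$ Pontryagin number statement (\Cref{thm:PontrayginMod3}), where an $\FF_3$-orientation must be transported via $\theta$; your closing remark that one could instead appeal to \Cref{profinite_completion_of_pdn} is in fact precisely the paper's proof.
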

\begin{proof}
    \cref{thmx:Invariance_of_SW_classes} implies that $G$ and $\widehat G$ have the same Stiefel--Whitney classes. By \cref{profinite_completion_of_pdn} the cup and cap products in $G$ and $\widehat G$ agree, so they have the same Stiefel--Whitney numbers. Since the same is true of $H$ and $\widehat H \cong \widehat G$, we conclude.
\end{proof}

\begin{corollary}\label{cobordant}
    Let $M$ and $N$ be closed connected aspherical smooth manifolds with good fundamental groups $G$ and $H$.  If $\Theta\colon \widehat G\to\widehat H$ is an isomorphism, then $M$ and $N$ are cobordant.
\end{corollary}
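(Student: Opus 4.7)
The plan is to combine \Cref{equalSWnumbers} with Thom's classical theorem on unoriented cobordism (cited earlier in the paper, in the discussion following the definition of Stiefel--Whitney numbers). Thom's theorem requires the two manifolds to have the same dimension, so the only non-trivial preliminary step is to verify $\dim M = \dim N$.

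To match dimensions, I would observe that for a closed connected aspherical $n$-manifold the natural map to the classifying space gives an $\FF_2$-algebra isomorphism $H^\ast(\pi_1 M;\FF_2)\cong H^\ast(M;\FF_2)$, and Poincar\'e duality with $\FF_2$-coefficients pins down $n$ as the largest degree in which the $\FF_2$-cohomology is nontrivial. Since $G$ and $H$ are good and $\widehat G\cong \widehat H$, we obtain an $\FF_2$-algebra isomorphism $H^\ast(G;\FF_2)\cong \mathbf{H}^\ast(\widehat G;\FF_2)\cong \mathbf{H}^\ast(\widehat H;\FF_2)\cong H^\ast(H;\FF_2)$, so the top nonvanishing degrees agree, yielding $\dim M=\dim N$.

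With dimensions matched, \Cref{equalSWnumbers} applied to $\Theta$ gives that $M$ and $N$ have identical Stiefel--Whitney numbers. Thom's theorem then concludes that $M$ and $N$ are unoriented cobordant.

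Since both of the inputs are by this point fully established, I do not expect any genuine obstacle; this corollary is essentially a one-line deduction. The only point requiring care is the dimension-matching argument above, which ensures that the hypotheses of Thom's theorem are actually met before we quote it.
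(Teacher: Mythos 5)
Your proposal is correct and follows essentially the same route as the paper: apply \Cref{equalSWnumbers} and then Thom's theorem that closed manifolds are unoriented cobordant if and only if their Stiefel--Whitney numbers agree. Your extra dimension-matching step (reading off $\dim M=\dim N$ from the top nonvanishing degree of the $\FF_2$-cohomology, transported through goodness and $\Theta$) is a valid precaution that the paper leaves implicit.
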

\begin{proof}
    This is an immediate consequence of \cref{equalSWnumbers} and Thom's result that the manifolds in question are cobordant if and only if they have the same Stiefel--Whitney numbers.
\end{proof}

\subsection{Integral Bocksteins}\label{sec:detect:Bocksteins}
From now on we work in the category $\PMod$ over the appropriate completed group algebra.
\begin{lemma} \label{profinite_completion_of_cohomology}
    Let $G$ be a good discrete group of type $\FP$.  Let $\phi: \Z \to \widehat{\Z}$ be the profinite completion map, and for each prime $p$ let $\phi_p: \Z \to {\Z}_p$ be the pro-$p$ completion map.
    Changing coefficients using $\phi$ and $\phi_p$ respectively gives rise to the maps 
    \begin{align}
        \phi_*: H^*(G; \Z) &\to H^*(G, \widehat{\Z}) \\
        (\phi_p)_*: H^*(G; \Z) &\to H^*(G; \Z_p)
    \end{align}
    $ $
    Furthermore, $\phi_*$ and $(\phi_p)_*$ can be canonically identified with the profinite and pro-$p$ completion maps respectively, which gives rise to isomorphisms $\widehat{H^*(G; \Z)} \cong H^*(G, \widehat{\Z})$ and ${H^*(G; \Z_p)} \cong (H^*(G; \Z))_p$ 
\end{lemma}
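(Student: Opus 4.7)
The plan is to use the type $\FP$ hypothesis to compute both sides from a common finite-type resolution, and then apply flatness of $\widehat{\Z}$ (respectively $\Z_p$) over the PID $\Z$ together with the observation that $-\otimes_\Z\widehat{\Z}$ realises profinite completion on finitely generated abelian groups.

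First, by the type $\FP$ hypothesis, fix a finite-length resolution $F_\bullet\to\Z$ by finitely generated projective $\Z G$-modules. Set $C^\bullet:=\Hom_{\Z G}(F_\bullet,\Z)$; since each $F_i$ is a direct summand of some $(\Z G)^{n_i}$, each $C^i$ is a finitely generated abelian group and $C^\bullet$ has finite length, so $H^*(G;\Z)=H^*(C^\bullet)$ is finitely generated. For each $i$ there is a natural map
\[
\Hom_{\Z G}(F_i,\Z)\otimes_\Z\widehat{\Z}\longrightarrow \Hom_{\Z G}(F_i,\widehat{\Z}),
\]
which is an isomorphism for $F_i=\Z G$ (both sides equal $\widehat{\Z}$), extends to finite direct sums, and hence to finitely generated projectives by passing to summands. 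Under this identification the cochain map induced by $\phi\colon\Z\to\widehat{\Z}$ is precisely the inclusion $C^\bullet\to C^\bullet\otimes_\Z\widehat{\Z}$, $c\mapsto c\otimes 1$.

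Now $\widehat{\Z}$ is torsion-free over $\Z$, hence flat, so tensoring with $\widehat{\Z}$ commutes with taking cohomology, giving
\[
H^*(G;\widehat{\Z})\cong H^*(C^\bullet)\otimes_\Z\widehat{\Z}=H^*(G;\Z)\otimes_\Z\widehat{\Z}.
\]
For any finitely generated abelian group $A$ the map $A\to A\otimes_\Z\widehat{\Z}$, $a\mapsto a\otimes 1$, is naturally the profinite completion $A\to\widehat{A}$: by additivity it suffices to check $A=\Z$ (giving $\widehat{\Z}$) and $A=\Z/n$ (giving $\Z/n=\widehat{\Z/n}$). Applying this to the finitely generated group $H^*(G;\Z)$ identifies $\phi_*$ with the profinite completion map and yields the isomorphism $\widehat{H^*(G;\Z)}\cong H^*(G;\widehat{\Z})$. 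The same argument with $\Z_p$ in place of $\widehat{\Z}$, using that $\Z_p$ is flat over $\Z$ and that $-\otimes_\Z\Z_p$ is pro-$p$ completion on finitely generated abelian groups, establishes the pro-$p$ statement.

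The only step demanding any genuine care is verifying that the coefficient-change map on cochains induced by $\phi$ coincides with $c\mapsto c\otimes 1$ under the identification $\Hom_{\Z G}(F_i,\widehat{\Z})\cong\Hom_{\Z G}(F_i,\Z)\otimes_\Z\widehat{\Z}$; this is checked directly on the free summands $\Z G$ and extended by functoriality. Everything else is formal homological algebra.
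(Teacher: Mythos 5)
Your argument is correct, and it proves the lemma by a genuinely different route than the paper. The paper's proof runs through the universal coefficient theorem: naturality of the UCT in the coefficient module gives a map of short exact sequences, the outer terms $\Hom(H_i(G;\Z),-)$ and $\Ext^1_\Z(H_{i-1}(G;\Z),-)$ are computed explicitly via the structure theorem for finitely generated abelian groups (type $\FP$ enters only through finite generation of homology), and the identification of the middle map with profinite completion is then extracted using that both rows split. You instead work at the chain level: for a finite-type projective resolution $F_\bullet$ you identify $\Hom_{\Z G}(F_i,\widehat\Z)$ with $\Hom_{\Z G}(F_i,\Z)\otimes_\Z\widehat\Z$ (checked on free summands, extended to summands by naturality), use flatness of $\widehat\Z$, resp.\ $\Z_p$, over $\Z$ to commute the tensor with cohomology, and then invoke the fact that $A\to A\otimes_\Z\widehat\Z$ is profinite completion for finitely generated $A$. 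Your route buys two things: it avoids the paper's final splitting step, which strictly speaking requires choosing splittings compatibly with the vertical maps, and it directly produces the isomorphism $H^*(G;\widehat\Z)\cong H^*(G;\Z)\otimes_\Z\widehat\Z$ together with its naturality in $G$ and in the coefficients, which is essentially the discrete-group half of what the paper establishes separately in \Cref{profinite_algebra_iso} via an inverse-limit and Mittag--Leffler argument. What the paper's UCT route buys is an explicit separation of the torsion and free parts of $H^*(G;\Z)$, which is in the spirit of how the torsion-free part is used later for the intersection form. Both proofs correctly make no use of the goodness hypothesis, which plays no role in this statement.
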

\begin{proof}
We give the proof in the case of profinite completion. The pro-$p$ completion case is analogous. $\phi$ induces the map 
 \begin{align*}
      \overline{\phi}:\Hom(H_i(G; \Z), \Z) &\to \Hom(H_i(G;{\Z}),\widehat{\Z}), \\
      \psi &\mapsto \phi \circ \psi. 
 \end{align*}
 
The Universal Coefficient Theorem for cohomology is natural \emph{in the coefficients}, so the following diagram commutes:
\begin{equation}\label{eqn:Ext-ension}
    \begin{tikzcd}[scale cd=0.9] 
      0 \ar[r] & \Ext_{\Z}^1(H_{i-1}(G;\Z), \Z) \ar[r] \ar[d] & H^*(G; \Z) \ar[r] \ar[d] & \Hom(H_i(G; \Z), \Z) \ar[r] \ar[d, "\overline{\phi}"] & 0 \\
      0 \ar[r] & \Ext_{\Z}^1(H_{i-1}(G;\Z), \widehat{\Z})\ar[r] & H^*(G; \widehat{\Z}) \ar[r] &\Hom(H_i(G; \Z), \widehat{\Z})\ar[r] & 0
    \end{tikzcd}
\end{equation}
The vertical arrows are the maps given by change of coefficients. Now note that for any torsion-free abelian group $A$, $\Hom(H_i(G; \Z), A) \cong A^{b_i(G)}$, and for any groups $G, H$, $\widehat{G \times H} \cong \widehat{G} \times \widehat{H}$, so $\overline{\phi}$ is in fact profinite completion.

Recall that for any group $A$, $\Ext_{\Z}^1(\Z, A)=0$, and 
\[
    \Ext_{R}^{i}\left(\bigoplus _{\alpha }M_{\alpha },N\right) \cong \prod _{\alpha }\Ext_{R}^{i}(M_{\alpha },N).
\]
 Furthermore, whenever $R$ is a commutative ring and $u \in R$ is not a zero divisor, then
\[\Ext_{R}^{i}(R/(u), B)\cong {
\begin{cases}B[u]&i=0;\\
B/uB&i=1; \\ 
0&{\text{otherwise.}}\end{cases}}\]
Suppose that $H_{i-1}(G;\Z) = \mathbb{Z}^k \oplus \sum_{j=1}^m \Z/ n_j\Z$. For each summand $\Z/n_j \Z$ of $H_{i-1}(G;\Z)$, we have $\Ext_{\Z}^1( \Z/ n_j\Z, \Z) \cong  \Z/ n_j\Z$. Hence 
\begin{align*}
    \Ext_{\Z}^1( H_{i-1}(G;\Z) , \Z) & \cong \Ext_{\Z}^1\left(\mathbb{Z}^k \oplus \bigoplus_{j=1}^n \Z/ n_j\Z, \Z\right) \\
    &\cong (\Ext_{\Z}^1( \Z, \Z))^k \times \prod_{j=1}^m\Ext_{\Z}^1( \Z/ n_j\Z, \Z) \\
    &\cong \prod_{j=1}^{m}  \Z/ n_j\Z.
\end{align*}
Similarly,
\begin{align*}
    \Ext_{\Z}^1(H_{i-1}(G;\Z), \widehat{\Z})&\cong   \Ext_{\Z}^1\left(\mathbb{Z}^k \oplus \bigoplus_{j=1}^n \Z/ n_j\Z, \widehat{\Z}\right)\\ 
    &\cong \prod_{j=1}^m \widehat{\Z}/n_j \widehat{\Z} \\ 
    &\cong \prod_{j=1}^m\varprojlim_k \left((\Z/k\Z)/n_j(\Z/k\Z)\right) \\ 
    &\cong \prod_{j=1}^m \Z/n_j\Z.
\end{align*}

Now, the map on cohomology induced by change of coefficients is an isomorphism, and the profinite completion of a finite group is canonically isomorphic to itself by the identity map. Together with the structure theorem of finitely generated abelian groups applied to $H_{i-1}(G;\Z)$ and the properties of $\Ext$ recalled above, this implies that the first vertical arrow in \eqref{eqn:Ext-ension} is the map given by profinite completion of a finite group. Both horizontal short exact sequences in \eqref{eqn:Ext-ension} are split, so appealing once more to the fact that $\widehat{G \times H} \cong \widehat{G} \times \widehat{H}$, we see that the middle vertical map is the profinite completion map. 
\end{proof}

    \begin{prop} \label{profinite_algebra_iso}
         Let $G$ be a good discrete group of type $\FP_{\infty}$, and let $\widehat{\Z}$ be the $\widehat{G}$-module with trivial $\widehat{G}$-action. Then, the maps
 \begin{align} \label{Zhatiso}
       &\iota^*: \mathbf{H}^*(\widehat{G};\widehat{\Z}) \to H^* (G;\widehat{\Z}) \\
       &\iota^*: \mathbf{H}^*(\widehat{G};{\Z_p}) \to H^* (G; {\Z_p})
 \end{align}
induced by $\iota: G\to \widehat G$ is an isomorphism of $\widehat{\Z}$, resp. $\Z_p$-algebras, and these isomorphisms are natural in $G$.  Moreover, 
\begin{align*}
    H^* (G;\widehat{\Z}) &\cong H^\ast(G;\Z)\otimes_\Z \widehat \Z \cong \widehat{H^\ast(G;\Z)} \quad \text{ and}\\
    H^* (G;\Z_p) &\cong H^\ast(G;\Z)\otimes_\Z \Z_{ p} \cong \widehat{(H^\ast(G;\Z))}_{p} .
\end{align*}
    \end{prop}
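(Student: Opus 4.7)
The plan is to reduce the comparison between $\mathbf{H}^*(\widehat G;\widehat\Z)$ and $H^*(G;\widehat\Z)$ to a tautological identification of cochain complexes by using a finite type projective resolution of $\Z$ over $\Z[G]$, and then to deduce the ``moreover'' part from flatness of $\widehat\Z$ and $\Z_p$ over $\Z$ combined with \Cref{profinite_completion_of_cohomology}. Since $G$ is good and of type $\FP_\infty$, an obvious adaptation of \Cref{completingresolution} (applied degree by degree) produces a resolution $F_\bullet\to\Z$ of $\Z$ by finitely generated projective $\Z[G]$-modules whose profinite completion $\widehat{F_\bullet}\to\widehat\Z$ is a resolution of $\widehat\Z$ by finitely generated projective $\widehat\Z\llbracket\widehat G\rrbracket$-modules.

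For each $n$, I would then write $F_n$ as a direct summand of some $\Z[G]^{k_n}$, so that $\widehat{F_n}$ is the corresponding direct summand of $\widehat\Z\llbracket\widehat G\rrbracket^{k_n}$. Both $\Hom_{\Z[G]}(F_n,\widehat\Z)$ and $\bHom_{\widehat\Z\llbracket\widehat G\rrbracket}(\widehat{F_n},\widehat\Z)$ then identify naturally with the corresponding summand of $\widehat\Z^{k_n}$: a continuous $\widehat G$-equivariant map out of $\widehat{F_n}$ is determined by the images of the free generators, and restricting along $F_n\hookrightarrow\widehat{F_n}$ returns exactly the same data. Since the differentials on the profinite side are, by construction, the profinite completions of the differentials of $F_\bullet$, the two cochain complexes are canonically identified, and under this identification the comparison map $\iota^*$ becomes the identity. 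The cup products on both sides are defined by a diagonal approximation for the respective resolution, the one on the profinite side being the completion of a chosen approximation on $F_\bullet$ (compare the proof of \Cref{profinite_completion_of_pdn}), so the cochain identification intertwines them. Thus $\iota^*$ is an isomorphism of $\widehat\Z$-algebras, and naturality in $G$ follows from functoriality of profinite completion applied to induced chain maps between chosen resolutions. The $\Z_p$-coefficient case is identical throughout.

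For the ``moreover'' part, finite generation and projectivity of $F_n$ give $\Hom_{\Z[G]}(F_n,\widehat\Z)\cong\Hom_{\Z[G]}(F_n,\Z)\otimes_\Z\widehat\Z$, and likewise with $\Z_p$ in place of $\widehat\Z$. Since $\widehat\Z$ and $\Z_p$ are flat over $\Z$, tensor product commutes with cohomology, yielding $H^*(G;\widehat\Z)\cong H^*(G;\Z)\otimes_\Z\widehat\Z$ and its $\Z_p$-analogue. Type $\FP_\infty$ ensures each $H^n(G;\Z)$ is a finitely generated abelian group, so $A\otimes_\Z\widehat\Z\cong\widehat A$ and $A\otimes_\Z\Z_p\cong\widehat A_p$ for $A=H^n(G;\Z)$, in agreement with \Cref{profinite_completion_of_cohomology}.

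The main obstacle is conceptual rather than technical: one must ensure the cochain-level identification above respects all structural data (differentials, cup product, naturality in $G$) so as to produce an isomorphism of graded $\widehat\Z$-algebras rather than merely of graded abelian groups. This is ultimately painless because $\widehat{F_\bullet}$ and the chosen diagonal approximation on it are literal profinite completions, so every comparison reduces to functoriality of profinite completion.
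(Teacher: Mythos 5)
Your argument is correct, but it reaches the conclusion by a genuinely different route from the paper. For the main isomorphism the paper never identifies cochain complexes directly: it writes $\widehat\Z$ as $\varprojlim_n \Z/n!$, uses the finite-type resolution to show (via the Mittag--Leffler condition and the vanishing of $\varprojlim^1$, following Weibel) that $H^*(G;\widehat\Z)\cong\varprojlim_n H^*(G;\Z/n!)$ and, using \Cref{completingresolution}, that $\mathbf{H}^*(\widehat G;\widehat\Z)\cong\varprojlim_n\mathbf{H}^*(\widehat G;\Z/n!)$, and then invokes goodness a second time through the finite-coefficient isomorphisms $\mathbf{H}^*(\widehat G;\Z/n!)\to H^*(G;\Z/n!)$ before passing to the limit. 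You instead use goodness only once, through (the degreewise extension of) \Cref{completingresolution}, and then observe that restriction along $F_\bullet\to\widehat{F_\bullet}$ identifies $\bHom_{\widehat\Z\llbracket\widehat G\rrbracket}(\widehat{F_\bullet},\widehat\Z)$ with $\Hom_{\Z[G]}(F_\bullet,\widehat\Z)$ on the nose; this shortcut is legitimate (for finite-type free or projective terms both cochain groups are the same summand of $\widehat\Z^{k_n}$, the differentials match, and the identification is exactly $\iota^*$) and it dispenses with the inverse-limit and $\varprojlim^1$ bookkeeping entirely. Likewise, for the ``moreover'' statement the paper leans on \Cref{profinite_completion_of_cohomology} (a universal-coefficient argument), whereas you get it from flatness of $\widehat\Z$ and $\Z_p$ over $\Z$ together with finite generation of each $H^n(G;\Z)$; both are fine, and your flatness argument is arguably more economical. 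Two caveats, neither fatal and both shared with the paper: \Cref{completingresolution} is stated for groups of type $\FP$, while here $G$ is only assumed $\FP_\infty$, so the ``obvious adaptation applied degree by degree'' you invoke does need the observation that exactness of the completed complex is checked degreewise for a finite-type (possibly infinite-length) resolution — the paper makes the same silent extension in its own proof; and the compatibility of cup products (your diagonal-approximation remark) is exactly what the paper addresses by citing Pletch, so you should cite or argue that point rather than treat it as automatic.
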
    
The proof is almost identical to \cite[Proposition 5.19]{Xu2025}. There is a generalisation to the relative case, but, as in \cite[Proposition 5.19]{Xu2025}, the proofs of the relative and absolute case are analogous. 
\begin{proof}
As above, we do the case of $\widehat{\Z}$, the $\Z_p$ case being analogous. We show that 
\begin{equation} \label{changecoefficients}
    \phi: H^*(G, \widehat{\Z}) \to \varprojlim H^*(G; \Z/n!)
\end{equation}
is an isomorphism.

Since $G$ is of type $\mathsf{FP}_\infty$, by \cite[Chapter VIII, Proposition 4.5]{Brown1982}, we can take a free resolution $F_\bullet\to \Z \to 0$ of finite type in the category of right $\Z[G]$-modules. Let $C^n_\bullet=\Hom_{\Z[G]}(F_\bullet, \Z/n!)$. Then,  $C^0_\bullet\leftarrow C^1_\bullet \leftarrow C^2_\bullet \leftarrow \cdots$ is a tower of chain complexes of abelian groups. 

The functor $\Hom_{\Z[G]}(F_\bullet, -)$ is exact because each $F_k$ is a free $\Z[G]$-module. Now, note that $\Z/(n+1)!\to \Z/n!$ is surjective, so $C^{n+1}_k \to C^n_k$ is surjective for any $n,k\in \mathbb{N}$. Consequently, the tower of cochain complexes $C^0_\bullet\leftarrow C^1_\bullet  \leftarrow \cdots$ satisfies the Mittag-Leffler condition degree-wise. By \cite[Theorem 3.5.8]{Weibel94}, or more precisely the variant stated after it, there is a short exact sequence 
\[
    0\to {\varprojlim\limits_n}^1 H^{k-1} (C^n_{\bullet})\to H^k (\varprojlim\limits_n C^n_\bullet) \to \varprojlim\limits_n H^k (C^n_\bullet)\to 0.
\]
By construction 
\[
    H^k(C^n_\bullet) \cong  \Ext^k_{\Z G}(\Z, \Z/n!) \cong H^{k}(G; \Z/n!).
\]
Letting $F_k= \Z[G]^{\oplus r_k}$, we obtain $C^n_k = \Hom_{\Z[G]}(\Z[G]^{\oplus r_k}, (\Z/n!)) \cong (\Z/n!)^{\oplus r_k}$. Thus,
\[
    \varprojlim_n C^n_k\cong \widehat{\Z}^{\oplus r_k}\cong \Hom_{\Z[G]}(F_k,  \widehat{\Z} ),
\]
and 
\[
    H^k(\varprojlim_n C^n_\bullet)\cong H^k(\Hom_{\Z[G]}(F_\bullet,  \widehat{\Z}))= \Ext^k_{\Z [G]}({\Z}, \widehat{\Z})= H^{k}(G;\widehat{\Z}). 
\]
Thus, we actually have a short exact sequence
\[
    0\to {\varprojlim\limits_n}^1 H^{k-1} (C^n_{\bullet})\to H^k(G;\widehat{\Z}) \xrightarrow{\;\phi\;} \varprojlim\limits_n H^k(G;\Z/n!)\to 0.
\]
The map $\phi$ is exactly the one in \cref{changecoefficients}, since both are induced by the maps between the coefficient modules $\widehat{\Z}\to \Z/n!$.

Note that for each $n,k\in \mathbb{N}$, $C^n_k\cong (\Z/n!)^{\oplus r_k}$ is finite, so $H_k(C^n_\bullet)$ is also finite. As a consequence, the tower of abelian groups $H_k(C^n_\bullet)$ satisfies the Mittag-Leffler condition, and so $\varprojlim^1_n H_k (C^n_\bullet)=0$ according to \cite[Exercise 3.5.2]{Weibel94}. Hence $\phi$ is an isomorphism.

Replacing occurrences of cohomology by continuous cohomology and appealing to \cref{completingresolution}, the same argument verbatim shows that 
\[
    \mathbf{H}^*(\widehat{G}; \widehat{\Z}) \to \varprojlim_n \mathbf{H}^*(\widehat{G}; \Z/n!)
\]
is an isomorphism.

Since $G$ is good and of type $\sf{FP}_\infty$, for all $n\in \mathbb{N}$ the map $\iota^*: H^*(G;\Z/n!) \to H^* (\widehat{G};\Z/n!)$ is an isomorphism. In addition, these isomorphisms are natural with respect to the coefficient modules $\Z/n!$. Thus, we have a commutative diagram of abelian groups
\begin{equation*}
\begin{tikzcd}
{\mathbf{H}^*(\widehat{G};\widehat{\Z})} \arrow[d, "\cong"',"\phi"] \arrow[r, "\iota^*"] & {H^*(G;\widehat{\Z})} \arrow[d, "\cong"] \\
{\varprojlim\limits_n \mathbf{H}^*(\widehat{G};\Z/n!)}   \arrow[r, "\cong"',"\iota^*"]   &  {\varprojlim\limits_n H^*(G;\Z/n!)}
\end{tikzcd}
\end{equation*}
where the two vertical maps and the bottom map were proved to be isomorphisms above. 

Thus, $\iota^*:  {\mathbf{H}^*(\widehat{G};\widehat{\Z})} \to {H^*(G;\widehat{\Z})}$  is also an isomorphism of abelian groups. Since $\iota^*$ is a homomorphism of $\widehat{\Z}$-modules, and all the intermediate maps respect cup products, we have an isomorphism of algebras. Since all the maps used above are natural, naturality of the isomorphism follows. 
\end{proof}

\begin{corollary}
    Let $G$ be a good discrete group of type $\FP_{\infty}$.  The following diagram commutes:
\begin{equation} \label{bigdiagram}
    \begin{tikzcd}
    & H^k(G; \Z/2\Z) \arrow[d, "id"] \arrow[r, "\beta_2"] & H^{k+1}(G; \Z) \arrow[d, hook]\\
    &H^k(G; \Z/2\Z) \arrow[r, "\beta_2"] & H^{k+1}(G; \widehat{\Z})= \widehat{H^{k+1}(G; \Z)} \\
    &\mathbf{H}^k(\widehat{G}; \Z/2\Z) \arrow[u, "\sim"] \arrow[r, "\hat{\beta_2}"] & \mathbf{H}^{k+1}(\widehat{G}; \widehat{\Z}) \arrow[u, "\sim"] 
    \end{tikzcd}
\end{equation}
The vertical maps from the first row to the second row are induced by the following commutative diagram in the coefficients:
\[ \begin{tikzcd}
    0 \ar[r] & \Z \ar[r, "\times 2"] \ar[d] & \Z \ar[r] \ar[d] & \Z/2\Z \ar[r] \ar[d, "="] & 0 \\
    0 \ar[r] &  \widehat \Z \ar[r, "\times 2"] & \widehat{\Z} \ar[r] &\Z/2\Z \ar[r] & 0
\end{tikzcd}\]
The vertical maps from the third row to the second row are induced by $\iota: G \to \widehat{G}$. Furthermore, the diagram is natural in $G$.
\end{corollary}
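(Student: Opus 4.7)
The plan is to verify the two rectangles in the diagram commute separately, and then to observe that naturality in $G$ follows formally from the naturality of each ingredient already established.

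First I would check the upper rectangle. The argument is simply naturality of the Bockstein connecting homomorphism with respect to morphisms of short exact sequences of coefficient modules. The coefficient diagram displayed in the statement is exactly such a morphism (with the identity on the $\Z/2\Z$ column), and the two Bocksteins $\beta_2$ in the top and middle rows are, by definition, the connecting homomorphisms associated to the upper and lower rows of that coefficient diagram. The identification $H^{k+1}(G;\widehat\Z)\cong \widehat{H^{k+1}(G;\Z)}$ of the middle-right entry is supplied by \Cref{profinite_completion_of_cohomology}, and under this identification the right vertical arrow in the upper rectangle becomes the profinite completion map on $H^{k+1}(G;\Z)$.

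Next I would tackle the lower rectangle. Here I would invoke the isomorphism $\iota^\ast\colon \mathbf{H}^\ast(\widehat G;\widehat\Z)\xrightarrow{\sim} H^\ast(G;\widehat\Z)$ from \Cref{profinite_algebra_iso}, together with the isomorphism $\iota^\ast\colon \mathbf{H}^k(\widehat G;\Z/2\Z)\xrightarrow{\sim} H^k(G;\Z/2\Z)$ provided by goodness of $G$. Commutativity reduces to the assertion that these change-of-group maps intertwine the two Bocksteins. Both $\hat\beta_2$ and the middle-row $\beta_2$ are connecting homomorphisms of the same short exact sequence $0\to\widehat\Z\xrightarrow{\times 2}\widehat\Z\to\Z/2\Z\to 0$, regarded in $\PMod(\widehat\Z\llbracket\widehat G\rrbracket)$ and in the category of $\widehat\Z[G]$-modules respectively, and $\iota^\ast$ is compatible with the $\delta$-functor structure on each side. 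Once this compatibility is in hand the lower rectangle commutes.

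Finally, naturality in $G$ is a formality: the Bocksteins are induced by a short exact sequence of coefficients that is fixed independently of the group, and the horizontal isomorphisms of \Cref{profinite_completion_of_cohomology} and \Cref{profinite_algebra_iso} are stated to be natural in $G$. I expect the only real obstacle to be the bookkeeping for the middle row: one must check that under the identification $H^{k+1}(G;\widehat\Z)\cong \widehat{H^{k+1}(G;\Z)}$, the Bockstein defined via the sequence with $\widehat\Z$-coefficients coincides with the profinite completion of the Bockstein for the sequence with $\Z$-coefficients. This is precisely where the upper and lower squares interact, and it follows from a further application of naturality of connecting homomorphisms to the morphism of short exact sequences in the coefficient diagram.
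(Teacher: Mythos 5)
Your proposal is correct and follows essentially the same route as the paper, whose proof is a one-line appeal to exactly the ingredients you spell out: naturality of the Bockstein with respect to the morphism of coefficient sequences (upper rectangle), compatibility of the maps induced by $\iota\colon G\to\widehat G$ with the connecting homomorphisms (lower rectangle), and naturality in $G$ of the identifications from \Cref{profinite_completion_of_cohomology} and \Cref{profinite_algebra_iso}. Your version simply makes explicit the bookkeeping the paper leaves implicit.
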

\begin{proof}
    The Bockstein sequence is natural in $G$, maps induced by profinite completion are natural in $G$, and the fact that \cref{Zhatiso} is natural combine to show that the diagram is natural.
\end{proof}

\subsection{Integral Stiefel--Whitney classes}\label{sec:detect:intSW}

\begin{corollary} \label{detectSW_Z}
        Let $M$ and $N$ be connected aspherical Poincar\'e complexes with good fundamental groups $G$ and $H$.  If $\Theta\colon \widehat G\to\widehat H$ is an isomorphism, then $W_k(M)=0$ if and only if $W_k(N)=0$.
\end{corollary}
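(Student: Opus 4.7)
The plan is to transport the proof of \Cref{thmx:Invariance_of_SW_classes} through the Bockstein and then descend from $\widehat\Z$-coefficients to $\Z$-coefficients. First, \Cref{thmx:Invariance_of_SW_classes} supplies an $\FF_2$-algebra isomorphism $\theta\colon H^\ast(N;\FF_2)\to H^\ast(M;\FF_2)$ with $\theta(w_{k-1}(N))=w_{k-1}(M)$. Since $M$ and $N$ are aspherical Poincar\'e complexes, $G$ and $H$ are of type $\F$, hence of type $\FP_\infty$, so \Cref{profinite_algebra_iso} applies: combining the $\widehat\Z$-coefficient isomorphisms $\iota^\ast\colon \mathbf{H}^\ast(\widehat G;\widehat\Z)\xrightarrow{\cong}H^\ast(G;\widehat\Z)$ and $\iota^\ast\colon \mathbf{H}^\ast(\widehat H;\widehat\Z)\xrightarrow{\cong}H^\ast(H;\widehat\Z)$ with the isomorphism $\Theta^\ast\colon \mathbf{H}^\ast(\widehat H;\widehat\Z)\xrightarrow{\cong}\mathbf{H}^\ast(\widehat G;\widehat\Z)$ yields a $\widehat\Z$-algebra isomorphism $\widehat\theta\colon H^\ast(N;\widehat\Z)\to H^\ast(M;\widehat\Z)$.

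Next I would use naturality of the Bockstein sequence in both coefficients and groups. Since each factor making up $\widehat\theta$ arises from profinite completion, each commutes with the (profinite) Bockstein $\widehat\beta_2$ of the short exact sequence $0\to\widehat\Z\xrightarrow{\times 2}\widehat\Z\to\FF_2\to 0$; and the mod $2$ reductions of $\widehat\theta$ agree with $\theta$ on each piece, hence overall. The commutative diagram \eqref{bigdiagram} then identifies, in each of $G$ and $H$, the map $\widehat\beta_2$ with the composition of the discrete Bockstein $\beta_2$ followed by the change-of-coefficients map $H^k(-;\Z)\to H^k(-;\widehat\Z)$. Defining $\widehat W_k\coloneqq \widehat\beta_2(w_{k-1})\in H^k(-;\widehat\Z)$, we obtain
\[
    \widehat\theta(\widehat W_k(N)) = \widehat\theta(\widehat\beta_2(w_{k-1}(N))) = \widehat\beta_2(\theta(w_{k-1}(N))) = \widehat\beta_2(w_{k-1}(M)) = \widehat W_k(M),
\]
so in particular $\widehat W_k(M)=0$ if and only if $\widehat W_k(N)=0$.

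Finally I would descend to the integers. Because $M$ is a finite CW complex, $H^k(G;\Z)\cong H^k(M;\Z)$ is a finitely generated abelian group and therefore residually finite; by \Cref{profinite_completion_of_cohomology} the change-of-coefficients map $H^k(G;\Z)\to H^k(G;\widehat\Z)$ is identified with the profinite completion map, which is injective. By \eqref{bigdiagram} the class $W_k(M)\in H^k(G;\Z)$ maps to $\widehat W_k(M)\in H^k(G;\widehat\Z)$, so $W_k(M)=0$ if and only if $\widehat W_k(M)=0$; the same holds for $N$. Chaining the three equivalences gives the result.

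The step I expect to be the main technical check is ensuring that all of the commuting data actually fits into one globally commutative diagram: the isomorphism $\widehat\theta$ is built from three pieces (two copies of $\iota^\ast$ and one copy of $\Theta^\ast$), and to conclude $\widehat\theta\circ\widehat\beta_2=\widehat\beta_2\circ\theta$ I need to know that each piece intertwines $\widehat\beta_2$ with itself \emph{and} with its discrete counterparts, which comes down to naturality of the connecting map in the cohomology long exact sequence for a short exact sequence of profinite \emph{and} discrete coefficient modules under the maps $\iota\colon G\to\widehat G$ and $\Theta\colon \widehat G\to\widehat H$. Given \Cref{naturalsq}, \Cref{profinite_completion_of_cohomology}, and \Cref{profinite_algebra_iso}, this is essentially bookkeeping, but it is the core content of the argument.
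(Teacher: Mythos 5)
Your proposal is correct and follows essentially the same route as the paper: both arguments match the mod $2$ Stiefel--Whitney classes via \Cref{thmx:Invariance_of_SW_classes}, push them through the Bockstein using the commutative diagram \eqref{bigdiagram}, and conclude via the injectivity of the change-of-coefficients map $H^{k}(-;\Z)\hookrightarrow H^{k}(-;\widehat\Z)$ coming from \Cref{profinite_completion_of_cohomology}. The only difference is cosmetic: you package the comparison as an explicit $\widehat\Z$-coefficient isomorphism $\widehat\theta$ and a class $\widehat W_k$, whereas the paper works directly with the common preimage of the Stiefel--Whitney classes in $\mathbf{H}^k(\widehat G;\Z/2)$ and its image under $\hat\beta_2$.
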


\begin{proof}
    Since the diagram \eqref{bigdiagram} commutes, we obtain the following commutative diagram.

\begin{equation*}
\begin{tikzcd}
& H^k(G; \Z/2\Z)  \arrow[r, "\beta_2"] & H^{k+1}(G; \Z) \arrow[d, hook]\\
&\mathbf{H}^k(\widehat{G}; \Z/2\Z) \arrow[u, "\iota_G^k"'] \arrow[d, "\iota_H^k"] \arrow[r, "\hat{\beta_2}"] & \mathbf{H}^{k+1}(\widehat{G}; \widehat{\Z}) \\
& H^k(H; \Z/2\Z)  \arrow[r, "\beta_2"] & H^{k+1}(H; \Z) \arrow[u, hook]\\
\end{tikzcd}
\end{equation*}
    Both $\iota_G^\ast$ and $\iota_H^\ast$ preserve Stiefel--Whitney classes by \Cref{thmx:Invariance_of_SW_classes}, so \[(\iota_H^k)^{-1}(w_k(H))=(\iota_G^k)^{-1}(w_k(G)).\]
    $W_{k+1}(G) \neq 0$ if and only if $ \beta_2(\iota_G^k)^{-1}(w_k(G)) \neq 0$, and similarly $W_{k+1}(H) \neq 0$ if and only if $ \beta _2(\iota_H^k)^{-1}(w_k(H)) \neq 0$, but 
    \[\beta _2(\iota_H^k)^{-1}(w_k(H))= \beta _2(\iota_G^k)^{-1}(w_k(G)), \]
    so $W_{k+1}(G) \neq 0$ if and only if $W_{k+1}(H) \neq 0$ as required. 
\end{proof}

We can readily apply this to the problem of profinitely detecting spin$^\CC$ structures. 

\begin{corollary} \label{spinC}
        Let $M$ and $N$ be closed connected aspherical smooth manifolds with good fundamental groups $G$ and $H$.  If $\Theta\colon \widehat G\to\widehat H$ is an isomorphism, then $M$ admits a spin$^\CC$-structure if and only if $N$ does.
\end{corollary}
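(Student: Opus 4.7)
The plan is to combine the characterization of spin$^\CC$ structures in terms of Stiefel--Whitney classes with the profinite invariance results already proved in the paper.  Recall from the remark on Stiefel--Whitney obstructions that a compact manifold $M$ admits a spin$^\CC$ structure if and only if $w_1(M) = 0$ and $W_3(M) = 0$ (via \cite[Corollary D.4]{LawsonMichelsohn1989}).  So it suffices to show that these two conditions transfer between $M$ and $N$.

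First, by \Cref{thmx:Invariance_of_SW_classes} applied to the aspherical closed manifolds $M$ and $N$ (viewed as Poincar\'e complexes), the isomorphism $\Theta\colon \widehat G \to \widehat H$ yields an $\FF_2$-algebra isomorphism $\theta\colon H^\ast(N;\FF_2) \to H^\ast(M;\FF_2)$ with $\theta(w_i(N)) = w_i(M)$ for all $i$.  In particular $w_1(M) = 0$ if and only if $w_1(N) = 0$.  Second, by \Cref{detectSW_Z} applied at index $k=2$, we have $W_3(M) = 0$ if and only if $W_3(N) = 0$.  Combining the two equivalences gives the result.

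I expect no serious obstacle: the corollary is essentially a direct packaging of \Cref{thmx:Invariance_of_SW_classes} and \Cref{detectSW_Z}, with the only substantive input being the cohomological criterion for the existence of a spin$^\CC$ structure.  The only minor point to verify is that the relevant characterization ($w_1 = W_3 = 0$) is being applied correctly for smooth manifolds, which is exactly what is recorded earlier in the paper.
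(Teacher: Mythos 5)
Your proof is correct and follows essentially the same route as the paper: reduce to the criterion $w_1=W_3=0$ and invoke \Cref{thmx:Invariance_of_SW_classes} together with \Cref{detectSW_Z}. (If anything, your version is the cleaner one: the paper's proof writes $W_2$ where the criterion recorded earlier, via Lawson--Michelsohn, uses $W_3$, so you have applied the obstruction at the correct index.)
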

\begin{proof}
    It suffices to prove $w_1(M)=W_2(M)=0$ if and only if $w_1(N)=W_2(N)=0$.  But this follows from \Cref{thmx:Invariance_of_SW_classes} and \Cref{detectSW_Z}.
\end{proof}

\subsection{Pontryagin classes and numbers modulo 3}\label{sec:detect:Pmod3}

\begin{thm}\label{thm:PontrayginClassesMod3}
    Let $M$ and $N$ be connected aspherical Poincar\'e complexes with good fundamental groups $G$ and $H$.  If $\Theta\colon \widehat G\to\widehat H$ is an isomorphism, then there exists an $\FF_3$-algebra isomorphism $\theta\colon H^\ast(N;\FF_3)\to H^\ast(M;\FF_3)$ such that $\theta(\overline p_i(N))=\overline p_i(M)$ and $\theta(\overline p(N))=\overline p(M)$.
\end{thm}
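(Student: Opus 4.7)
The plan is to mimic the proof of \Cref{thmx:Invariance_of_SW_classes}, replacing Steenrod squares with the Steenrod $p$th power operations $P^i$ for $p=3$, replacing Wu classes with the mod-$3$ classes $s_3^i$ of Hirzebruch, and replacing Stiefel--Whitney classes with the mod-$3$ reductions of the Pontryagin classes.

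First I would string together the same three $\FF_3$-algebra isomorphisms that appeared in the mod-$2$ case. Since $M$ and $N$ are aspherical, the classifying maps induce $\FF_3$-algebra isomorphisms
\[
H^\ast(G;\FF_3)\cong H^\ast(M;\FF_3)\qquad\text{and}\qquad H^\ast(H;\FF_3)\cong H^\ast(N;\FF_3),
\]
and because Steenrod power operations are natural, these isomorphisms intertwine the operations $P^i$. Since $G$ and $H$ are good, the natural inclusions into their profinite completions induce $\FF_3$-algebra isomorphisms
\[
\mathbf{H}^\ast(\widehat G;\FF_3)\cong H^\ast(G;\FF_3)\qquad\text{and}\qquad \mathbf{H}^\ast(\widehat H;\FF_3)\cong H^\ast(H;\FF_3),
\]
which preserve the $P^i$ by \Cref{naturalsq}(2)(a). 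The isomorphism $\Theta$ between the profinite completions then induces an $\FF_3$-algebra isomorphism $\mathbf{H}^\ast(\widehat H;\FF_3)\to \mathbf{H}^\ast(\widehat G;\FF_3)$ which also commutes with the $P^i$, since these operations are defined purely in terms of the module category structure on $\PMod(\FF_3\llbracket\widehat G\rrbracket)$. Composing all of these produces the desired $\FF_3$-algebra isomorphism $\theta\colon H^\ast(N;\FF_3)\to H^\ast(M;\FF_3)$ intertwining the Steenrod power operations.

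The second step is to transport the classes $s_3^i$ via $\theta$. Exactly as for the Wu classes, the class $s_3^i\in H^{2i(p-1)}(M;\FF_3)$ (with $p=3$) is characterized by Poincaré duality together with the identity $P^i(u)=s_3^i\smile u$ for all $u$ of appropriate degree; by the same mechanism used in \Cref{thmx:Invariance_of_SW_classes}, $\theta$ is an $\FF_3$-algebra isomorphism intertwining cup products and $P^i$, and the Poincaré duality cap products are also intertwined through \Cref{profinite_completion_of_pdn}(3) (applied with $p=3$). Consequently $\theta(s_3^i(N))=s_3^i(M)$.

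The final step is to invoke Hirzebruch's theorem recalled in \Cref{sec:chern_pontryagin}: the mod-$3$ Pontryagin class $\overline{p}_k$ is a fixed universal polynomial in the $s_3^i$, and this formula is used as the definition of $\overline{p}_k$ for a general Poincaré complex. Applying $\theta$, which is an $\FF_3$-algebra homomorphism that sends $s_3^i(N)$ to $s_3^i(M)$, gives $\theta(\overline{p}_k(N))=\overline{p}_k(M)$ and therefore $\theta(\overline{p}(N))=\overline{p}(M)$. The only non-routine point is making sure that the classes $s_3^i$ are indeed determined algebraically by the cup product, the Steenrod operations, and the Poincaré duality cap product in the same way the Wu classes are; this is just Hirzebruch's setup transplanted verbatim to the Poincaré complex setting as justified in \Cref{sec:chern_pontryagin}, so there is no essential new obstacle beyond what was already overcome for Stiefel--Whitney classes.
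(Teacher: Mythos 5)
Your proposal is correct and follows essentially the same route as the paper: the same chain of $\FF_3$-algebra isomorphisms from asphericity and goodness, naturality of the power operations $P^i$ via \Cref{naturalsq}, and Hirzebruch's algebraic expression of $\overline{p}_k$ in terms of the classes $s_3^i$ as recalled in \Cref{sec:chern_pontryagin}. The only cosmetic difference is that you spell out the duality bookkeeping (noting that \Cref{profinite_completion_of_pdn} is stated there with $\FF_2$ but applies verbatim with $\FF_3$), which the paper leaves implicit.
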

\begin{proof}
    The proof is analogous to the case of the Stiefel--Whitney classes.  Since $M$ and $N$ are aspherical the classifying maps $M\to BG$ and $N\to BH$ induce $\FF_3$-algebra isomorphisms
    \[H^\ast(G;\FF_3)\cong H^\ast(M;\FF_3)\text{ and }H^\ast(H;\FF_3)\cong H^\ast(N;\FF_3)\]
     which preserve the Steenrod $3$rd power operations $P^i$.
    Now, since $G$ and $H$ are good, the natural inclusions $G\to \widehat G$ and $H\to \widehat H$ induce natural $\FF_3$-algebra isomorphisms
    \[\mathbf{H}^\ast(\widehat G;\FF_3)\cong H^\ast(G;\FF_3)\text{ and }\mathbf{H}^\ast(\widehat H;\FF_3)\cong H^\ast(H;\FF_3),\]
    which by \cref{naturalsq} preserves the Steenrod $3$rd power operations $P^i$.  Now, the result follows from the Hirzebruch's algebraic description of the classes $p_k$ in terms of the classes $s_3^i$ as explained in \Cref{sec:chern_pontryagin}.
\end{proof}

The following corollary follows almost identically to \Cref{equalSWnumbers} once we have argued how to orient our spaces $M$ and $N$.

\begin{corollary}\label{thm:PontrayginMod3}
    Let $M$ and $N$ be be connected aspherical Poincar\'e complexes with good fundamental groups $G$ and $H$.  Then, there exists $\FF_3$-orientations on $M$ and $N$ such that the Pontryagin numbers of $M$ and $N$ are equal modulo $3$.
\end{corollary}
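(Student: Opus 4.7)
The plan is to mirror the argument for \Cref{equalSWnumbers}, combining \Cref{thm:PontrayginClassesMod3} with the $\FF_3$-analogue of the fundamental-class identification in \Cref{profinite_completion_of_pdn}. The one subtlety absent in the $\FF_2$ case is that $\FF_3^\times=\{\pm 1\}$ is nontrivial, so the $\FF_3$-fundamental class is only canonical up to a sign; this is precisely why the statement only asserts that one can \emph{choose} the $\FF_3$-orientations compatibly, rather than a basis-free equality.

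First I would record the $\FF_3$-version of \Cref{profinite_completion_of_pdn}: for a good $\PD^n(\FF_3)$-group $G$, the inclusion $\iota\colon G\to \widehat G$ induces an $\FF_3$-algebra isomorphism $\iota^\ast\colon \mathbf{H}^\ast(\widehat G;\FF_3)\xrightarrow{\sim} H^\ast(G;\FF_3)$, an isomorphism $\iota_n\colon H_n(G;\FF_3)\xrightarrow{\sim}\mathbf{H}_n(\widehat G;\FF_3)$ of one-dimensional $\FF_3$-vector spaces, and a commutative cap-product square intertwining the two. The proof of \Cref{profinite_completion_of_pdn} carries over with $\FF_2$ replaced by $\FF_3$ throughout, so this step is routine.

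Next, \Cref{thm:PontrayginClassesMod3} provides an $\FF_3$-algebra isomorphism $\theta\colon H^\ast(N;\FF_3)\to H^\ast(M;\FF_3)$ sending $\overline p_i(N)\mapsto \overline p_i(M)$. Consequently, for every partition of $n$ by multiples of $4$, the monomial $\overline p_{i_1}(N)\smile\cdots\smile\overline p_{i_k}(N)$ is sent to $\overline p_{i_1}(M)\smile\cdots\smile\overline p_{i_k}(M)$. Restricting to top degree gives an isomorphism $\theta_n\colon H^n(N;\FF_3)\xrightarrow{\sim} H^n(M;\FF_3)$ of one-dimensional $\FF_3$-vector spaces.

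Finally, I would pick any $\FF_3$-fundamental class $[M]_3\in H_n(M;\FF_3)$ and define $[N]_3\in H_n(N;\FF_3)$ to be the unique fundamental class satisfying $\langle \alpha,[N]_3\rangle=\langle \theta_n(\alpha),[M]_3\rangle$ for every $\alpha\in H^n(N;\FF_3)$; this is well-defined because the evaluation pairing is perfect and $\theta_n$ is an isomorphism. Plugging in $\alpha = \overline p_{i_1}(N)\smile\cdots\smile\overline p_{i_k}(N)$ for each admissible partition and using the compatibility from the previous step produces the equality of mod $3$ Pontryagin numbers. The main (and essentially only) obstacle is the orientation ambiguity just described; all other components are $\FF_3$-analogues of machinery already set up for $\FF_2$.
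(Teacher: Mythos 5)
Your proposal is correct and follows essentially the same route as the paper: invoke \Cref{thm:PontrayginClassesMod3} for the $\FF_3$-algebra isomorphism preserving the classes $\overline p_i$, then transport the orientation through it and conclude by naturality of the pairing. The only cosmetic difference is that you transfer the fundamental class in homology via the dual evaluation pairing, whereas the paper transports the top-degree orientation class in cohomology directly under $\theta$; these are equivalent, and your explicit handling of the sign ambiguity in $\FF_3^\times$ is exactly why the statement is phrased as a choice of orientations.
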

\begin{proof}
    We have $\FF_3$-algebra isomorphisms
    \[H^\ast(M;\FF_3)\cong H^\ast(G;\FF_3)\cong H^\ast(\widehat G;\FF_3)\cong H^\ast(\widehat H;\FF_3) \cong H^\ast(H;\FF_3)\cong H^\ast(N;\FF_3),\]
    denote the homomorphism giving by composing all isomorphisms from left to right by $\theta\colon H^\ast(M;\FF_3)\to H^\ast(N;\FF_3)$. An $\FF_3$-orientation $[M]\in H^n(M;\FF_3)$ determines a nontrivial class $\theta[M]\in H^\ast(N;\FF_3)$.  We take $[N]=\theta[M]$ to be the desired $\FF_3$-orientation of $N$.  The result now follows from naturality of the cap products with the specified orientation classes.
\end{proof}

\subsection{The intersection form}\label{sec:detect:sigma}

In this section we show that evenness the intersection form, which plays a crucial role in the topological classification of $4$-manifolds, and its signature$\mod{8}$ are profinite invariants. The residue class of the signature has been the subject of significant interest in the past; see for example \cite{Ochanine, Freedman1976}.
\begin{lemma}
       Let $M$ and $N$ be connected orientable aspherical Poincar\'e complexes of dimension $n=4m$ with good fundamental groups $G$ and $H$.  If $\Theta\colon \widehat G\to\widehat H$, then the cup product pairing on $(H^{2m}(G; \Z))^{\tf}$ is even if and only if the pairing on $(H^{2m}(H;\Z))^{\tf}$ is even. 
\end{lemma}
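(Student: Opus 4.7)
The plan is to encode evenness in purely mod $2$ terms involving data that has already been shown to transfer across the profinite completion: the Wu class $v_{2m}$, the cup product, and the mod $2$ Bockstein.

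First I would reformulate evenness. For $x\in H^{2m}(G;\Z)$, the integer $\langle x\cup x,[G]\rangle$ is even if and only if its mod $2$ reduction vanishes, which by Wu's defining identity equals
\[
\langle r(x)\cup r(x),[G]_{\FF_2}\rangle=\langle \Sq^{2m}(r(x)),[G]_{\FF_2}\rangle=\langle v_{2m}(G)\smile r(x),[G]_{\FF_2}\rangle,
\]
where $r\colon H^{2m}(G;\Z)\to H^{2m}(G;\FF_2)$ is the mod $2$ reduction. Since $H^{4m}(G;\FF_2)\cong\FF_2$ (by orientability of $G$ as a $\PD^{4m}$-group), this pairing is zero iff $v_{2m}(G)\smile r(x)=0$. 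As $x$ ranges over $H^{2m}(G;\Z)$, $r(x)$ ranges over $\mathrm{im}(r)=\ker\!\big(\beta_2\colon H^{2m}(G;\FF_2)\to H^{2m+1}(G;\Z)\big)$. Thus evenness of the pairing on $(H^{2m}(G;\Z))^{\tf}$ is equivalent to the purely mod $2$ condition
\[
v_{2m}(G)\smile y=0\text{ in }H^{4m}(G;\FF_2)\quad\text{for every }y\in\ker\beta_2,
\]
and analogously for $H$.

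Next I would transport this condition across $\Theta$. By \Cref{thmx:Invariance_of_SW_classes}, $\Theta$ induces an $\FF_2$-algebra isomorphism $\theta\colon H^*(H;\FF_2)\to H^*(G;\FF_2)$ with $\theta(v_{2m}(H))=v_{2m}(G)$; and being an isomorphism $H^{4m}(H;\FF_2)\cong\FF_2\to H^{4m}(G;\FF_2)\cong\FF_2$ in top degree, it preserves (non)vanishing of cup products. It remains to show $\theta$ identifies $\ker\beta_2$ on both sides. For this I use diagram \eqref{bigdiagram}: the discrete Bockstein $\beta_2$ into $H^{2m+1}(-;\Z)$ agrees with the $\widehat\Z$-valued Bockstein under the completion map
\[
H^{2m+1}(-;\Z)\longrightarrow H^{2m+1}(-;\widehat\Z)\cong\widehat{H^{2m+1}(-;\Z)},
\]
which is injective because $G$ (and $H$) are of type $\F$, so $H^{2m+1}(-;\Z)$ is a finitely generated, hence residually finite, abelian group. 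Consequently $\ker\beta_2$ coincides with $\ker\hat\beta_2$ under the isomorphism $\iota^*\colon\mathbf{H}^{2m}(\widehat G;\FF_2)\xrightarrow{\sim}H^{2m}(G;\FF_2)$ (and similarly for $H$). Since $\Theta^*$ intertwines $\hat\beta_2$'s, so does $\theta$.

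Combining these two ingredients, the evenness condition stated in the first paragraph for $G$ holds if and only if it holds for $H$, completing the argument. The main obstacle I expect is the injectivity of the comparison map $H^{2m+1}(G;\Z)\hookrightarrow H^{2m+1}(G;\widehat\Z)$ needed to align the discrete and profinite Bocksteins; this is the one point where finite generation of the integral cohomology is essential, and without it the profinite detour could lose information sitting in the kernel of the completion map.
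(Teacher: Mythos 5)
Your argument is correct, and its core is the same as the paper's: reduce evenness to a mod $2$ statement via Wu's identity $\Sq^{2m}(\overline{x})=v_{2m}\smile\overline{x}=\overline{x}\smile\overline{x}$ and then invoke the profinite invariance of $v_{2m}$ coming from \Cref{thmx:Invariance_of_SW_classes}. Where you genuinely diverge is in how much you squeeze out of this identity. The paper's proof asserts that the form is even \emph{precisely} when $v_{2m}$ vanishes and stops there; the implication ``even $\Rightarrow v_{2m}=0$'' only pairs $v_{2m}$ against classes in the image of the mod $2$ reduction map, so it implicitly needs that image to be large enough (e.g.\ reduction surjective, no relevant $2$-torsion) for Poincar\'e duality to force $v_{2m}=0$. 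You avoid committing to that biconditional: you keep the weaker, but exactly equivalent, condition ``$v_{2m}$ annihilates $\ker\beta_2=\operatorname{im}(r)$'' and then show this condition transfers, using that $\theta$ preserves $v_{2m}$, cup products in the one-dimensional top degree, and $\ker\beta_2$ --- the last point via diagram \eqref{bigdiagram} together with injectivity of $H^{2m+1}(G;\Z)\hookrightarrow H^{2m+1}(G;\widehat\Z)$ (finitely generated, hence residually finite, integral cohomology, exactly as in the proof of \Cref{detectSW_Z}). So your route costs an extra appeal to the Bockstein comparison of \Cref{sec:detect:Bocksteins}, but it buys robustness at the one point where the paper's lemma is quick; the only step you gloss over is the standard observation that torsion classes in $H^{2m}(G;\Z)$ cup trivially into $H^{4m}(G;\Z)\cong\Z$, which is needed to pass between evenness on the torsion-free part and evenness of $\langle x\cup x,[G]\rangle$ for all integral $x$.
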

\begin{proof}
For $x \in (H^{2m}(G; \Z))^{\tf}$ let $\overline{x}$ denote the image of $x$ after reduction $\mod 2$, considered as a class in $(H^{2m}(G, \FF_2))$. By definition of the Wu class and the Steenrod squares, for all $x \in (H^{2m}(G; \Z))^{\tf}$ \[v_{2m} \cup \overline{x}  = \Sq^{2m}(\overline{x})= \overline{x} \cup \overline{x}.\] $\overline{x} \cup \overline{x}$ is a number in $\FF_2$ that coincides with the reduction of $x \cup x \mod{2}$, so the form is even precisely when the class $v_{2m}$ vanishes. This is detected profinitely, as shown in the proof of \cref{thmx:Invariance_of_SW_classes}.
\end{proof}

\begin{thm}\label{intersectionForm}
    Let $M$ and $N$ be connected aspherical Poincar\'e complexes of dimension $n=4m$ with good fundamental groups $G$ and $H$.  If $\Theta\colon \widehat G\to\widehat H$, then  $\sigma(M)\equiv \sigma(N)\pmod {8}$.
    Furthermore, if the signatures are equal, then the intersection forms are equivalent over $\QQ$.
\end{thm}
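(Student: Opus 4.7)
The plan is to reduce both statements to the local-global machinery assembled in \cref{sec:prelims:quadratic}. The key preliminary observation is that the intersection form of $M$, viewed as a symmetric bilinear form on $H^{2m}(G;\Z)^{\tf}$, becomes identified with that of $N$ after tensoring with $\Z_p$ for every prime $p$. Indeed, \cref{profinite_algebra_iso} (applied with coefficients $\Z_p$) together with the isomorphism $\widehat G \cong \widehat H$ gives $\Z_p$-algebra isomorphisms $H^*(G;\Z_p) \cong H^*(H;\Z_p)$ compatible with cup products; combined with the identification $H^*(G;\Z_p) \cong H^*(G;\Z) \otimes_\Z \Z_p$ from the same proposition and the fact that the cup product on $H^{2m}(-;\Z)$ annihilates torsion (the pairing lands in $H^{4m} \cong \Z$, which is torsion-free), this produces equivalences of $\Z_p$-valued symmetric bilinear forms on $H^{2m}(G;\Z)^{\tf} \otimes \Z_p$ and $H^{2m}(H;\Z)^{\tf} \otimes \Z_p$ for all $p$.

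Granting this identification, the first statement follows immediately: the oddity and the $p$-excesses (for $p \geq 3$) are invariants of the quadratic form over $\QQ_p$, and hence over $\Z_p$, so they agree for $M$ and $N$. The reciprocity formula \eqref{sum_formula} then forces $\sigma(M) \equiv \sigma(N) \pmod{8}$. For the second statement, assume the signatures are equal and apply \cref{Hasse}. The dimensions of the forms equal the Betti numbers $b_{2m}$, which agree because the preliminary identification already pins down the $\Z_p$-ranks. Signature, oddity, and $p$-excesses all agree by the first part of the argument. It remains to verify that $\det(f_M)/\det(f_N) \in \QQ^*$ is a square, which by the local-global principle for $\QQ$-squares reduces to checking in each $\QQ_p$ and in $\RR$: over $\QQ_p$ it is inherited from $\Z_p$-equivalence of the forms, while over $\RR$ the determinant has sign $(-1)^{(n-\sigma)/2}$, so equal signatures and equal dimensions force the signs to agree and the ratio to be positive. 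Then \cref{Hasse} yields $\QQ$-equivalence of the intersection forms.

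I expect the main obstacle to be the careful justification of the preliminary identification, namely that the $\Z_p$-algebra isomorphism actually transports the intersection pairing with values in $\Z_p$, not merely the cup product structure with values in $H^{4m}(G;\Z_p)$. This needs the $\Z_p$-coefficient analogue of the fundamental class identification in \cref{profinite_completion_of_pdn}, showing that the profinite completion map sends $[G]$ to $[\widehat G]$ in a way compatible with the cap product over $\Z_p$; given this, naturality of cap product and the description of $H^*(G;\widehat\Z)$ from \cref{profinite_algebra_iso} finish the job. A secondary technicality is that the statement does not mention orientability, but this is harmless since $w_1$ is a profinite invariant by \cref{thmx:Invariance_of_SW_classes}, so $M$ is orientable iff $N$ is.
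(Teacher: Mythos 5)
Your proposal is correct and follows essentially the same route as the paper: transport the intersection form to a $\Z_p$-form for every $p$ via \cref{profinite_completion_of_cohomology} and \cref{profinite_algebra_iso}, deduce equality of dimension, oddity and $p$-excesses, apply the reciprocity formula \eqref{sum_formula} for the signature congruence mod $8$, and then combine equal signatures with the square-determinant check and \cref{Hasse} for $\QQ$-equivalence. The technical point you flag (that the algebra isomorphism transports the $\Z_p$-valued pairing, not just the cup product structure) is handled in the paper in exactly the way you anticipate, via the compatibility of the completed coefficients with the duality isomorphisms.
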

\begin{proof}
    By \cref{profinite_completion_of_cohomology}, for all primes $p$,  the maps     
    \begin{align}
        (\phi_p)_*: H^*(G; \Z) &\to H^*(G; \Z_p)
    \end{align} are induced by pro-$p$ completion. The proof also shows that $(H^*(G; \Z_p))^{\tf}$ is canonically isomorphic to $(H^*(G; \Z))^{\tf} \otimes \Z_p$, so the matrix representing the intersection form on $(H^{2m}(G; \Z))^{\tf}$, when viewed as a matrix over $\Z_p$, is equivalent to the matrix representing the form on $(H^{2m}(G; \Z_p))^{\tf}$. 
    By \cref{profinite_algebra_iso} this coincides with the $\Z_p$-form on $(\mathbf{H}^{2m}(\widehat G; \Z_p))^{\tf}$. Note that this only gives an equivalence of forms over $\Z_p$, not over $\Z$! Since $G$ and $H$ have isomorphic profinite completions, the intersection pairings have the same dimension, excesses, and oddities. It follows from \cref{sum_formula} that the signatures are congruent modulo $8$. 
    
    We now prove the `furthermore'.  If the signatures of the intersection forms are equal, then the ratio of their determinants is positive. Combined with the fact that the discriminants are equal in $\QQ_p$ for every $p$, this implies that the quotient of their determinants is a square in $\QQ^{\times}$. Since they have the same oddity and $p$-excesses modulo $8$, \cref{Hasse} implies that the forms are equivalent over $\QQ$.
\end{proof}

\begin{remark}[The de Rham and Kervaire invariants]
    One can view the quadratic and symmetric $L$-groups, $L_n(\Z)$ and $L^n(\Z)$, of $\Z$ as analogues of the signature.  The group $L_{4k+2}(\Z)\cong \Z/2$ corresponds to the Arf invariant of a quadratic form and topologically corresponds to the \emph{Kervaire invariant} of a $4k+2$-manifold \cite{Kervaire1960}.  It is known the Kervaire invariant can only be non-zero in dimensions 2, 6, 14, 30, 62, and 126 \cite{MahowaldTangora1967,Browder1969,BarrattJonesMahowald1984,HillHopkinsRavenel2016,LinWangXu2025}. The group $L^{4k+1}(\Z)\cong\Z/2$ detects the \emph{de Rham invariant} of a $4k+1$-manifold \cite{LusztigMilnorPeterson1969,MorganSullivan1974}. The de Rham invariant of $M$ is in fact equal to the Stiefel--Whitney class $w_2w_{4k-1}$ and so is a profinite invariant amongst aspherical manifolds with good fundamental group by \Cref{thmx:Invariance_of_SW_classes}.  
\end{remark}

\subsection{Anthology}\label{sec:detect:thmA}
In this section we prove \Cref{thmx:A}.

\begin{duplicate}[\Cref{thmx:A}]
        Let $M$ and $N$ be smooth closed connected aspherical manifolds with good fundamental groups $G$ and $H$.  If $\widehat G\cong \widehat H$, then
    \begin{enumerate}
        \item $M$ and $N$ are cobordant;
        % \item $M$ admits a pin structure if and only $N$ admits a pin structure;
        \item $M$ admits a spin structure if and only $N$ admits a spin structure;
        \item $M$ admits a spin$^\CC$ structure if and only $N$ admits a spin$^\CC$ structure.
        \item $\sigma(M)\equiv \sigma(N)\pmod{8}.$
    \end{enumerate}
\end{duplicate}
\begin{proof}
    (1) follows from \Cref{cobordant}.  (2) follows from \Cref{spin}.  (3) follows from \Cref{spinC}.  (4) follows from \Cref{intersectionForm}.
\end{proof}

\section{A limiting example}\label{sec:example}
The authors are extremely grateful to Holger Kammeyer for providing the following example that illustrates what can go wrong without the goodness hypothesis.

\begin{example}\label{ex:diffDims}
    Take a real quadratic number field $k$ (e.g. $k=\QQ(\sqrt 2)$). Then there exists a simply-connected simple algebraic $k$-group $G_1$ of exceptional type $E_8$ which becomes the real split form $E_{8(8)}$ at one real place of $k$ and which becomes the anisotropic real form at the other real place of $k$. 
    Similarly, there exists a simply-connected simple algebraic $k$-group $G_2$ of type $E_8$ which becomes the real form $E_{8(-24)}$ at one real place of $k$ and the anisotropic real form at the other real place of $k$. 
    The existence is a consequence of the Hasse principle for the Galois cohomology of simply-connected groups.
    In most cases, the Hasse principle was proved through work of Kneser and Harder (see \cite{Kneser1966} for an exposition), but the case of $E_8$, which is the one we need, was proved by Chernousov \cite{Chernousov1989}.
    %(actually, Chernousov provided the last missing case of $E_8$ that we're using here), \sam{compare Theorem IV.7.2 in Kammeyer's handrwitten lecture notes on the Galois cohomology of algebraic groups and add citations}. 
    For $i=1,2$, let $\Gamma_i$ be an arithmetic subgroup of $G_i$. 
    Then by the Borel--Harish-Chandra theorem \cite{Borel1962}, $\Gamma_1$ is a cocompact lattice in the symmetric space of the real Lie group $E_{8(8)}$ and $\Gamma_2$ is a cocompact lattice in the symmetric space of $E_{8(-24)}$.

Now, the simply-connected groups of type $E_8$ have trivial centre which implies that over $p$-adic fields, there exists only one form of type $E_8$ and this splits. 
Lattices of type $E_8$ have the congruence subgroup property \cite{Rapinchuk}, so the groups $\Gamma_1$ and $\Gamma_2$ are profinitely commensurable, and hence suitable finite index subgroups have isomorphic profinite completions. 
For $i=1,2$, virtual cohomological dimension of $\Gamma_i$ is the dimension of the associated symmetric space $X_i$, which is $128$ for $\Gamma_1$ and $112$ for $\Gamma_2$.  Let $\Lambda_i\leqslant \Gamma_i$ for $i=1,2$ be torsion-free finite index subgroups with $\widehat \Lambda_1\cong\widehat \Lambda_2$.  In particular, the closed manifolds $M_i=X_i/\Lambda_i$ for $i=1,2$ have profinitely isomorphic residually finite fundamental groups but $\dim M_1=128 \neq 112=\dim M_2$.
\end{example}

\bibliographystyle{halpha}
\bibliography{refs.bib}

\end{document}